\begin{document}
\theoremstyle{plain}
\newtheorem{definition}{Definition}
\newtheorem{theorem}{Theorem}
\newtheorem{proposition}[definition]{Proposition}
\newtheorem{lemma}[definition]{Lemma}
\newtheorem{corollary}[definition]{Corollary}
\newtheorem{example}[definition]{Example}
\newtheorem{conjecture}{Conjecture}
\newtheorem{problem}{Problem}
\newtheorem{assumption}{Assumption}
\newtheorem*{discussion}{Discussion}
\newtheorem{question}[equation]{Question}
\newtheorem{remark}[definition]{Remark}
\newtheorem{notation}{Notation}
\errorcontextlines=0

\newcommand{\C}{\mathbb{C}}
\newcommand{\R}{\mathbb{R}}
\newcommand{\N}{\mathbb{N}}
\renewcommand{\o}{\text{o}}
\newcommand{\ord}{\text{ord}}
\renewcommand{\d}{\partial}
\newcommand{\supp}{\text{Supp}}
\newcommand{\phg}{\text{phg}}
\renewcommand{\hom}{\text{hom}}
\newcommand{\sR}{\text{sR}}
\newcommand{\Op}{\text{Op}}
\newcommand{\e}{\varepsilon}
\renewcommand{\leq}{\leqslant}
\renewcommand{\geq}{\geqslant}
\renewcommand{\div}{\text{div}}
\newcommand{\todo}[1]{$\clubsuit$ {\tt #1}}

\title{Subelliptic wave equations are never observable}
\date{\today}
\author{Cyril Letrouit\footnote{Sorbonne Universit\'e, Universit\'e Paris-Diderot, CNRS, Inria, Laboratoire Jacques-Louis Lions,  F-75005 Paris (\texttt{letrouit@ljll.math.upmc.fr})}\ \footnote{DMA, \'Ecole normale sup\'erieure, CNRS, PSL Research University, 75005 Paris}}
\maketitle
\abstract{It is well-known that observability (and, by duality, controllability) of the elliptic wave equation, i.e., with a Riemannian Laplacian, in time $T_0$ is almost equivalent to the  Geometric Control Condition (GCC), which stipulates that any geodesic ray meets the control set within time $T_0$. We show that in the subelliptic setting, GCC is never verified, and that subelliptic wave equations are never observable in finite time. More precisely, given any subelliptic Laplacian $\Delta=-\sum_{i=1}^m X_i^*X_i$ on a manifold $M$, and any measurable subset $\omega\subset M$ such that $M\backslash \omega$ contains in its interior a point $q$ with $[X_i,X_j](q)\notin \text{Span}(X_1,\ldots,X_m)$ for some $1\leq i,j\leq m$, we show that for any $T_0>0$, the wave equation with subelliptic Laplacian $\Delta$ is not observable on $\omega$ in time $T_0$. 

\smallskip

The proof is based on the construction of sequences of solutions of the wave equation concentrating on geodesics (for the associated sub-Riemannian distance) spending a long time in $M\backslash \omega$. As a counterpart, we prove a positive result of observability for the wave equation in the Heisenberg group, where the observation set is a well-chosen part of the phase space.}

\setcounter{tocdepth}{1}
\tableofcontents

\section{Introduction}

\subsection{Setting} \label{s:setting}

Let $n\in\mathbb{N}^*$ and let $M$ be a smooth connected compact manifold of dimension $n$ with a non-empty boundary $\partial M$. Let $\mu$ be a smooth volume on $M$.
%
We consider $m\geq 1$ smooth vector fields $X_1,\ldots,X_m$ on $M$ which are not necessarily independent, and we assume that the following  H\"ormander condition holds (see \cite{hormander1967hypoelliptic}):

\begin{center}
The vector fields $X_1,\ldots,X_m$ and their iterated brackets $[X_i,X_j], [X_i,[X_j,X_k]]$, etc. span the tangent space $T_qM$ at every point $q\in M$.
 \end{center}

\smallskip
We consider the sub-Laplacian $\Delta$ defined by
\begin{equation*}
\Delta=-\sum_{i=1}^m X_i^*X_i=\sum_{i=1}^m X_i^2+\div_\mu(X_i)X_i
\end{equation*}
where the star designates the transpose in $L^2(M,\mu)$ and the divergence with respect to $\mu$ is defined by $L_X\mu=(\div_\mu X)\mu$, where $L_X$ stands for the Lie derivative. Then $\Delta$ is hypoelliptic (see \cite[Theorem 1.1]{hormander1967hypoelliptic}). 

\smallskip

We consider $\Delta$ with Dirichlet boundary conditions and the domain $D(\Delta)$ which is the completion in $L^2(M,\mu)$ of the set of all $u\in C_c^\infty(M)$ for the norm $\|(\text{Id}-\Delta) u\|_{L^2}$. We also consider the operator $(-\Delta)^{\frac12}$ with domain $D((-\Delta)^{\frac12})$ which is the completion in $L^2(M,\mu)$ of the set of all $u\in C_c^\infty(M)$ for the norm $\|(\text{Id}-\Delta)^{\frac12} u\|_{L^2}$.

\smallskip

Consider the wave equation
\begin{equation} \label{e:system}
\left\lbrace \begin{array}{l}
\partial_{tt}^2u-\Delta u=0\, \text{ \ in $(0,T)\times M$}\\
u=0 \text{ \ on } (0,T)\times \partial M, \\
(u_{|t=0},\partial_tu_{|t=0})=(u_0,u_1)\,
\end{array}\right.
\end{equation}
where $T>0$. It is well-known (see for example \cite[Theorem 2.1]{garetto2015wave}, \cite[Chapter II, Section 6]{engel1999one}) that for any $(u_0,u_1)\in D((-\Delta)^{\frac12})\times L^2(M)$, there exists a unique solution
\begin{equation} \label{e:spacesolutions}
u\in C^0(0,T;D((-\Delta)^{\frac12}))\;\cap\; C^1(0,T;L^2(M))
\end{equation}
to \eqref{e:system} (in a mild sense). 

\smallskip

We set
\begin{equation}  \label{e:defmathcalH}
\|v\|_{\mathcal{H}}=\left(\int_M \sum_{j=1}^m (X_jv(x))^2d\mu(x)\right)^{\frac12}.
\end{equation}
 Note that $\|v\|_{\mathcal{H}}=\|(-\Delta)^{\frac12}v\|_{L^2(M,\mu)}$.

\smallskip

The natural energy of a solution is
\begin{equation*}
E(u(t,\cdot))=\frac12(\|\partial_tu(t,\cdot)\|_{L^2(M,\mu)}^2+\|u(t,\cdot)\|^2_{\mathcal{H}}).
\end{equation*}
If $u$ is a solution of \eqref{e:system}, then 
\begin{equation*}
\frac{d}{dt}E(u(t,\cdot))=0,
\end{equation*}
and therefore the energy of $u$ at any time is equal to
\begin{equation*}
\|(u_0,u_1)\|^2_{\mathcal{H}\times L^2}=\|u_0\|^2_{\mathcal{H}}+\|u_1\|^2_{L^2(M,\mu)}.
\end{equation*}

In this paper, we investigate exact observability for the wave equation \eqref{e:system}. 

\begin{definition} \label{d:obs}
Let $T_0>0$ and $\omega \subset M$ be a $\mu$-measurable subset. The subelliptic wave equation \eqref{e:system} is exactly observable on $\omega$ in time $T_0$ if there exists a constant $C_{T_0}(\omega)>0$ such that, for any $(u_0,u_1)\in D((-\Delta)^{\frac12})\times L^2(M)$, the solution $u$ of \eqref{e:system} satisfies
\begin{equation} \label{e:strongobs}
\int_0^{T_0} \int_\omega|\partial_tu(t,x)|^2d\mu(x)dt \geq C_{T_0}(\omega) \|(u_0,u_1)\|^2_{\mathcal{H}\times L^2}.
\end{equation}
\end{definition}

\subsection{Main result} \label{s:mainresult}
Our main result is the following.
\begin{theorem}  \label{t:main}
Let $T_0>0$ and let $\omega\subset M$ be a measurable subset. We assume that there exist $1\leq i,j\leq m$ and $q$ in the interior of $M\backslash\omega$ such that $[X_i,X_j](q)\notin {\rm Span}(X_1(q),\ldots,X_m(q))$. Then the subelliptic wave equation \eqref{e:system} is not exactly observable on $\omega$ in time $T_0$.
\end{theorem}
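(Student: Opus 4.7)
The strategy is to refute the observability inequality \eqref{e:strongobs} by constructing, for any given $T_0>0$, a sequence $(u_n)$ of solutions of \eqref{e:system} with initial energy bounded away from $0$ such that $\int_0^{T_0}\!\int_\omega|\partial_t u_n|^2\,d\mu\,dt\to 0$. The first step is to extract a distinguished covector from the bracket hypothesis. The bracket condition at $q$ forces $H_q:=\mathrm{Span}(X_1(q),\ldots,X_m(q))$ to be a proper subspace of $T_qM$, since otherwise every tangent vector --- in particular $[X_i,X_j](q)$ --- would lie in $H_q$. Hence there is a non-zero $\xi_0\in T_q^*M$ with $\langle\xi_0,X_k(q)\rangle=0$ for every $k$. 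The principal symbol $h(x,\xi):=\sum_k\langle\xi,X_k(x)\rangle^2$ of $-\Delta$ vanishes at $(q,\xi_0)$, and since it is a sum of squares of functions vanishing simultaneously there, its full differential $\nabla_{x,\xi}h(q,\xi_0)$ vanishes as well. Thus $(q,\xi_0)$ is a stationary point of the sub-Riemannian Hamiltonian flow and $t\mapsto(t,q,0,\xi_0)$ is a stationary null bicharacteristic of the wave operator; geometrically, this is what permits sub-Riemannian geodesics trapped in any prescribed neighborhood of $q$ for arbitrarily long time (short helices around $q$ in the Heisenberg picture).

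Next, I construct high-frequency quasi-modes of $-\Delta$ concentrated near $q$. Fix an open set $U$ with $q\in U\subset\overline{U}\subset\mathrm{int}(M\setminus\omega)$ and a cut-off $\chi\in C_c^\infty(U)$ with $\chi(q)=1$; the goal is a sequence $\phi_n\in C_c^\infty(U)$ with
\[
\|\phi_n\|_{L^2}=1,\qquad \mu_n\to\infty,\qquad \|(-\Delta-\mu_n^2)\phi_n\|_{L^2}=o(\mu_n).
\]
The natural construction proceeds via the nilpotent approximation of $(X_1,\ldots,X_m)$ at $q$ in privileged coordinates: since $H_q\subsetneq T_qM$, this approximation contains a Heisenberg-type factor whose sub-Laplacian admits explicit eigenmodes --- Hermite functions with vertical Fourier parameter $\eta_n\to\infty$ and eigenvalue $\mu_n^2\asymp|\eta_n|$. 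Transplanting these to $M$ via an exponential chart and cutting off by $\chi$ produces $\phi_n$, the error being governed by the local approximation of $-\Delta$ by its nilpotent model, which is $O_{L^2}(1)=o(\mu_n)$. Equivalently, one may use a Gaussian-beam ansatz $\phi_n=\lambda_n^{n/4}\chi(x)e^{i\lambda_n\psi(x)}$ with complex quadratic phase concentrated at $(q,\xi_0)$, in which the vanishing of $dh$ at the critical point saves one power of $\lambda_n$ in the WKB remainder. This step is the technical heart of the proof.

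Finally, I promote the quasi-mode to a bad wave solution. Consider $v_n(t,x):=\mu_n^{-1}e^{-i\mu_n t}\phi_n(x)$; a direct calculation gives $(\partial_t^2-\Delta)v_n=\mu_n^{-1}e^{-i\mu_n t}(\mu_n^2+\Delta)\phi_n$, whose $L^2$-norm is $o(1)$. Let $u_n$ be the solution of \eqref{e:system} with initial data $(v_n(0,\cdot),\partial_t v_n(0,\cdot))$ and set $w_n:=u_n-v_n$: this $w_n$ has zero initial data and $L^2$-small forcing, so the energy identity yields $\sup_{t\in[0,T_0]}(\|\partial_t w_n\|_{L^2}+\|w_n\|_{\mathcal{H}})\to 0$. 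Since $\partial_t v_n=-ie^{-i\mu_n t}\phi_n$ is supported in $U\subset M\setminus\omega$, the triangle inequality gives $\int_0^{T_0}\!\int_\omega|\partial_t u_n|^2\,d\mu\,dt\to 0$, while the initial energy of $u_n$ equals $\tfrac12(\|\phi_n\|_{L^2}^2+\mu_n^{-2}\langle-\Delta\phi_n,\phi_n\rangle)\to 1$ by the quasi-mode relation, and is therefore bounded below. Passing to real and imaginary parts returns to real-valued data, contradicting \eqref{e:strongobs} and proving the theorem. The main obstacle is the quasi-mode construction at the degenerate characteristic point $(q,\xi_0)$: producing the sharp $o(\mu_n)$ remainder requires exploiting the stationarity of the bicharacteristic --- equivalently, the vanishing of $dh$ at $(q,\xi_0)$ --- which is itself a direct consequence of the bracket hypothesis.
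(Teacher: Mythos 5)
Your proposal diverges from the paper's proof in a fundamental way, and the divergence introduces a genuine gap. Let me make this precise.

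\textbf{The overall strategy versus the paper's.} The paper works entirely in the \emph{elliptic} region of the cotangent bundle. It constructs non-stationary normal geodesics $t\mapsto(x_\e(t),\xi_\e(t))$ with $g^*(x_\e,\xi_\e)=1/4$ throughout, which ``spiral'' and remain near $q$ for time $T_0$ (Proposition~\ref{p:existencegeod}), and then propagates Gaussian beams along them (Propositions~\ref{p:approxgb}--\ref{p:exactgb}). The whole construction takes place where $\sqrt{g^*}$ is smooth and the standard WKB/positive-Lagrangian machinery applies verbatim; see the paper's remarks after Proposition~\ref{p:approxgb} and in Section~\ref{s:links}, where this design choice is explicit. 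You, by contrast, go directly to the fixed point $(q,\xi_0)$ of the geodesic flow sitting \emph{inside} the characteristic variety $\Sigma=\{g^*=0\}$, and try to build quasi-modes of $-\Delta$ concentrated there. That is a conceptually coherent alternative, and your identification of $(q,\xi_0)$ as a stationary null-bicharacteristic is correct. But the construction you sketch for the quasi-modes does not yield the $o(\mu_n)$ remainder on which everything else in your argument rests.

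\textbf{The gap: the quasi-mode remainder is $O(\mu_n)$, not $o(\mu_n)$.} You claim that transplanting a nilpotent (Heisenberg-type) eigenmode and multiplying by a cutoff $\chi$ produces $\|(-\Delta-\mu_n^2)\phi_n\|_{L^2}=O(1)=o(\mu_n)$. This estimate fails already in the flat Heisenberg model. Take $X_1=\partial_{x_1}$, $X_2=\partial_{x_2}-x_1\partial_{x_3}$, and the exact eigenfunction $\psi_\eta=\eta^{1/4}e^{i\eta x_3}h_0(\sqrt\eta\,x_1)$ with $-\Delta_H\psi_\eta=\eta\psi_\eta$, so $\mu=\sqrt{\eta}$. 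Then
\[
(-\Delta_H-\eta)(\chi\psi_\eta)=[-\Delta_H,\chi]\psi_\eta
=-2(X_1\chi)X_1\psi_\eta-2(X_2\chi)X_2\psi_\eta-(\Delta_H\chi)\psi_\eta.
\]
The first term is exponentially small because $X_1\chi$ is supported where $h_0(\sqrt\eta\,x_1)$ is tiny. But $X_2\psi_\eta=-i\eta x_1\psi_\eta$, and the support of $X_2\chi$ reaches down to $x_1=0$. One computes
\[
\|(X_2\chi)X_2\psi_\eta\|_{L^2}^2\;\sim\;\eta^2\!\int_{\text{box}}x_1^2\,|\psi_\eta(x_1)|^2\,dx
\;\sim\;\eta^2\cdot\eta^{-1}\;=\;\eta,
\]
while $\|\chi\psi_\eta\|_{L^2}\sim 1$. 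Thus $\|(-\Delta_H-\mu^2)(\chi\psi_\eta)\|_{L^2}\sim\sqrt\eta=\mu$, which is \emph{not} $o(\mu)$: it is exactly borderline, and every subsequent step in your scheme (estimating the energy of $w_n=u_n-v_n$, the convergence of initial energy to $1$) degrades from ``$o(1)$'' to ``$O(1)$'', so the contradiction with \eqref{e:strongobs} is lost. The Gaussian-beam variant you mention has the same problem: the vanishing of $dg^*$ at $(q,\xi_0)$ does make $g^*(x,\nabla\psi(x))$ vanish to second order at $q$, but its Hessian is not constant, so on the scale $|x-q|\lesssim\lambda^{-1/2}$ of the beam one gets $\lambda^2 g^*(x,\nabla\psi(x))-\mu^2=O(\lambda)=O(\mu^2)$, again not $o(\mu)$.

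\textbf{Why this is not a cosmetic fix.} One can improve the flat-Heisenberg packet by superposing the exact eigenmodes $e^{i\eta x_3+i\xi_2 x_2}h_0(\sqrt\eta(x_1-\xi_2/\eta))$ over an $O(1)$-window in $\eta$ and a $\sqrt\eta$-window in $\xi_2$, keeping $\Delta\eta=O(1)$: this yields $o(\mu)$ error on $G$. But the resulting packet is localized at scale $\eta^{-1/2}$ in $(x_1,x_2)$ and only at scale $O(1)$ in $x_3$, which is \emph{not} the homogeneous ball $\delta_{\eta^{-1/2}}(B)$. Transplanting to $M$ therefore confronts the nilpotent approximation error: $X_i=\widehat X_i^q+R_i$ with $R_i$ of non-holonomic order $\geq 0$, and on a packet of the anisotropic scale above the contribution of the $R_i$ is again of size $O(\mu)$, not $o(\mu)$. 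There is a genuine tension between localizing enough to control the approximation error (which pushes $\delta_3\to 0$, hence $\Delta\eta\to\infty$, hence large remainder) and keeping the remainder small (which forces $\delta_3=O(1)$, hence large approximation error). The paper's method of staying away from $\Sigma$ is precisely what makes this tension disappear; their Gaussian beams propagate along the elliptic bicharacteristics $g^*=1/4$, where the remainder estimate $O(k^{-1/2})$ of Proposition~\ref{p:approxgb} is uniform and the nilpotent approximation is invoked only to produce the spiraling \emph{geodesics}, not quasi-modes.

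\textbf{Summary.} Your reduction from observability to the existence of quasi-modes $\phi_n$ supported near $q$ with $\|(-\Delta-\mu_n^2)\phi_n\|=o(\mu_n)$, and the wave-solution construction from such $\phi_n$, are both correct. But the existence of those quasi-modes at the degenerate point $(q,\xi_0)\in\Sigma$ is the technical heart of your argument and is left essentially unjustified; the two constructions you suggest both produce remainders of size $\sim\mu_n$, which is insufficient. The paper proves Theorem~\ref{t:main} by a different route, one that avoids $\Sigma$ entirely.
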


Consequently, using a duality argument (see Section \ref{s:contr}), we obtain that exact controllability does not hold either in any finite time.

\begin{definition} \label{d:cont}
Let $T_0>0$ and $\omega \subset M$ be a measurable subset. The subelliptic wave equation \eqref{e:system} is exactly controllable on $\omega$ in time $T_0$ if for any $(u_0,u_1)\in D((-\Delta)^{\frac12})\times L^2(M)$, there exists $g\in L^2((0,T_0)\times M)$ such that the solution $u$ of 
\begin{equation} \label{e:systemcont}
\left\lbrace \begin{array}{l}
\partial_{tt}^2u-\Delta u=\bold{1}_\omega g\, \text{ \ in $(0,T_0)\times M$}\\
u=0 \text{ \ on } (0,T_0)\times \partial M, \\
(u_{|t=0},\partial_tu_{|t=0})=(u_0,u_1)\,
\end{array}\right.
\end{equation}
satisfies $u(T_0,\cdot)=0$.
\end{definition}

\begin{corollary} \label{c:contr}
Let $T_0>0$ and let $\omega\subset M$ be a measurable subset. We assume that there exist $1\leq i,j\leq m$ and $q$ in the interior of $M\backslash\omega$ such that $[X_i,X_j](q)\notin {\rm Span}(X_1(q),\ldots,X_m(q))$.  Then the subelliptic wave equation \eqref{e:system} is not exactly controllable on $\omega$ in time $T_0$.
\end{corollary}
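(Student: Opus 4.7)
The plan is to deduce Corollary \ref{c:contr} from Theorem \ref{t:main} via the classical Hilbert Uniqueness Method (HUM) of Lions, which rephrases exact $L^2$-controllability of the forced wave equation as the surjectivity of a bounded linear map; by the closed range theorem in Hilbert spaces, this surjectivity is equivalent to the observability inequality \eqref{e:strongobs}.

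First I would introduce the control-to-state operator $\mathcal{C}:L^2((0,T_0)\times M)\to\mathcal{H}\times L^2$ defined by $\mathcal{C}g=(u(T_0,\cdot),\partial_tu(T_0,\cdot))$, where $u$ solves \eqref{e:systemcont} with vanishing initial data and forcing $\mathbf{1}_\omega g$. Since \eqref{e:system} is time-reversible (apply well-posedness backward in time), exact controllability in the sense of Definition \ref{d:cont} is equivalent to surjectivity of $\mathcal{C}$: steering an arbitrary initial state to zero is the same as steering zero to an arbitrary state.

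Next I would identify the adjoint $\mathcal{C}^{*}$. Multiplying \eqref{e:systemcont} by a free solution $v$ of \eqref{e:system} and integrating by parts on $(0,T_0)\times M$---using the Dirichlet condition and the vanishing of $u$ and $\partial_tu$ at $t=0$---expresses $\int_0^{T_0}\int_\omega g\,\bar v\,d\mu\,dt$ as a duality pairing between $\mathcal{C}g$ and the final Cauchy data of $v$. After identifying $\mathcal{H}\times L^2$ with its own dual through the energy pairing (using $(-\Delta)^{1/2}$ as an isometry between $\mathcal{H}$ and $L^2$), this computation shows that $\mathcal{C}^{*}(v_0,v_1)$ equals $\mathbf{1}_\omega\partial_tv$ for a free solution $v$ whose final data are built from $(v_0,v_1)$ up to signs and a component swap. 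The closed range theorem then converts surjectivity of $\mathcal{C}$ into the coercivity bound $\|\mathcal{C}^{*}(v_0,v_1)\|_{L^2}^{2}\geq C\,\|(v_0,v_1)\|_{\mathcal{H}\times L^2}^{2}$, which is exactly \eqref{e:strongobs}.

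The corollary follows by contraposition: under the geometric hypothesis, Theorem \ref{t:main} denies \eqref{e:strongobs}, hence $\mathcal{C}$ is not surjective and exact controllability cannot hold in time $T_0$. The only delicate point is the bookkeeping of function spaces---making the observability space of Definition \ref{d:obs} coincide, under the duality pairing, with the controllability space of Definition \ref{d:cont}---and this uses nothing specific to the subelliptic setting; the rest is standard Hilbert-space duality together with integration by parts.
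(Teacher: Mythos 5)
Your proof is correct and takes essentially the same route as the paper: Lions' HUM reduces exact controllability to an observability inequality, and the passage from the natural $L^2$-duality observation $\mathbf{1}_\omega v$ (with adjoint data in $L^2\times\mathcal{H}'$) to the form $\mathbf{1}_\omega\partial_t u$ of \eqref{e:strongobs} is exactly the content of the paper's Lemma \ref{l:obslowerreg}, which you fold into your adjoint computation via the energy identification of $\mathcal{H}\times L^2$ with its dual. One small precision: the free solution $\tilde v$ realizing $\mathcal{C}^{*}(v_0,v_1)=\mathbf{1}_\omega\partial_t\tilde v$ has final Cauchy data \emph{exactly} $(v_0,v_1)$ --- no sign change or component swap is needed once the energy pairing is used.
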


In what follows, we denote by $\mathcal{D}$ the set of all vector fields that can be decomposed as linear combinations with smooth coefficients of the $X_i$:
$$
\mathcal{D}={\rm Span}(X_1,\ldots,X_m)\subset TM.
$$
$\mathcal{D}$ is called the \emph{distribution} associated to the vector fields $X_1,\ldots,X_m$. For $q\in M$, we denote by $\mathcal{D}_q\subset T_qM$ the distribution $\mathcal{D}$ taken at point $q$.


%

\smallskip

The assumptions of Theorem \ref{t:main} are satisfied as soon as the interior $U$ of $M\setminus \omega$ is non-empty and $\mathcal{D}$ has constant rank $<n$ in $U$. Indeed, under these conditions, we can argue by contradiction: assume that for any $q\in U$ and any $1\leq i,j\leq m$, there holds $[X_i,X_j](q)\in {\rm Span}(X_1(q),\ldots,X_m(q))=\mathcal{D}_q$. Then we have $[\mathcal{D},\mathcal{D}]\subset \mathcal{D}$ in $U$, i.e., $\mathcal{D}$ is involutive. By Frobenius's theorem, $\mathcal{D}$ is then completely integrable, which contradicts H\"ormander's condition.

\smallskip

The following examples show that the assumptions of Theorem \ref{t:main} are also satisfied in some non-constant rank cases:

\begin{example}\label{ex:exempleBaouendi}
In the Baouendi-Grushin case, for which $X_1=\partial_{x_1}$ and $X_2=x_1\partial_{x_2}$ are vector fields on $(-1,1)_{x_1}\times\mathbb{T}_{x_2}$ where $\mathbb{T}=\R/\mathbb{Z}$, the corresponding  sub-Laplacian $\Delta=X_1^2+X_2^2$ (here, $\mu=dx_1dx_2$ for simplicity) is elliptic outside of the singular submanifold $S=\{x_1=0\}$. Therefore, the corresponding subelliptic wave equation is observable on any open subset containing $S$ (with some finite minimal time of observability, see \cite{bardos1992sharp}), but according to Theorem \ref{t:main}, it is not observable in any finite time on any subset $\omega$ such that the interior of $M\setminus \omega$ has a non-empty intersection with $S$. 
\end{example}

\begin{example}
In the Martinet case, the vector fields are $X_1=\partial_{x_1}$ and $X_2=\partial_{x_2}+x_1^2\partial_{x_3}$ on $(-1,1)_{x_1}\times\mathbb{T}_{x_2}\times\mathbb{T}_{x_3}$, and the corresponding sub-Laplacian is $\Delta=X_1^2+X_2^2$ (again, $\mu=dx_1dx_2dx_3$ for simplicity). Then, we have $[X_1,X_2]=2x_1\partial_{x_3}$. The only points at which this bracket belongs to the distribution ${\rm Span}(X_1,X_2)$ are the points for which $x_1=0$. Since this set of points has empty interior, the assumptions of Theorem \ref{t:main} are satisfied as soon as $M\setminus\omega$ has non-empty interior.
\end{example}

\begin{remark} 
The assumption of compactness on $M$ is not necessary: we may remove it, and just require that the subelliptic wave equation \eqref{e:system} in $M$ is well-posed. It is for example the case if $M$ is complete for the sub-Riemannian distance induced by $X_1,\ldots,X_m$ since $\Delta$ is then essentially self-adjoint (\cite{strichartz1986sub}).
\end{remark}

\begin{remark}
Theorem \ref{t:main} remains true if $M$ has no boundary. In this case, the equation \eqref{e:system} is well-posed in a space slightly smaller than \eqref{e:spacesolutions}: a condition of null average has to be added since non-zero constant functions on $M$ are solutions of \eqref{e:system}, see Section \ref{s:truncatedobsstatement}. The observability inequality of Theorem \ref{t:main} remains true in this space of solutions: anticipating the proof, we notice that the spiraling normal geodesics of Proposition \ref{p:existencegeod} still exist (since their construction is purely local), and we subtract to the initial datum $u_0^k$ of the localized solutions constructed in Proposition \ref{p:exactgb} their spatial average $\int_M u_0^k d\mu$.
\end{remark}

\begin{remark} \label{r:halfwave}
Thanks to abstract results (see for example \cite{miller2012}), Theorem \ref{t:main} remains true when the subelliptic wave equation \eqref{e:system} is replaced by the subelliptic half-wave equation $\partial_tu+i\sqrt{-\Delta}u=0$ with Dirichlet boudary conditions. 
\end{remark}

\subsection{Ideas of the proof} \label{s:ideas}
In the sequel, we call ``normal geodesic''\footnote{This terminology is common in sub-Riemannian geometry, and it is justified by the fact that we can naturally associate to the vector fields $X_1,\ldots,X_m$ a metric structure on $M$ for which these projected paths are geodesics, see \cite{montgomery2002tour}.} the projection on $M$ of a bicharacteristic (parametrized by time) for the principal symbol of the wave equation \eqref{e:system}. We will give a more detailed definition in Section \ref{s:normal}. 

\smallskip

The proof of Theorem \ref{t:main} mainly requires two ingredients:
\begin{enumerate}
\item There exist solutions of the free subelliptic wave equation \eqref{e:system} whose energy concentrates along any given normal geodesic;
\item There exist normal geodesics which ``spiral" around curves transverse to $\mathcal{D}$, and which therefore remain arbitrarily close to their starting point on arbitrarily large time-intervals.
\end{enumerate}
Combining the two above facts, the proof of Theorem \ref{t:main} is straightforward (see Section \ref{s:concl}). Note that the first point follows from the general theory of propagation of complex Lagrangian spaces, while the second point is the main novelty of this paper.

\smallskip

Since our construction is purely local (meaning that it does not ``feel" the boundary and only relies on the local structure of the vector fields), we can focus on the case where there is a (small) open neighborhood $V$ of the origin $O$ such that $V\subset M\backslash\omega$, and $[X_i,X_j](O)\notin \mathcal{D}_O$ for some $1\leq i,j\leq m$. In the sequel, we assume it is the case. 

\smallskip

Let us give an example of vector fields where the spiraling normal geodesics used in the proof of Theorem \ref{t:main} are particularly simple. We consider the three-dimensional manifold with boundary $M_{1}=(-1,1)_{x_1}\times \mathbb{T}_{x_2}\times \mathbb{T}_{x_3}$, where $\mathbb{T}=\R/\mathbb{Z}\approx (-1,1)$ is the 1D torus. We endow $M_{1}$ with the vector fields $X_1=\partial_{x_1}$ and $X_2=\partial_{x_2}-x_1\partial_{x_3}$. This is the ``Heisenberg manifold with boundary". We endow $M_1$ with an arbitrary smooth volume $\mu$. The normal geodesics we consider are given by
\begin{equation} \label{e:geodspiraling}
\begin{array}{l}
x_1(t)=\varepsilon\sin(t/\varepsilon)\\
x_2(t)=\varepsilon\cos(t/\varepsilon)-\e \\
x_3(t)=\varepsilon(t/2-\varepsilon\sin(2t/\varepsilon)/4).
\end{array}
\end{equation}
They spiral around the $x_3$ axis $x_1=x_2=0$.

\smallskip

Here, one should think of $\varepsilon$ as a small parameter. In the sequel, we denote by $x_\varepsilon$ the normal geodesic with parameter $\varepsilon$.

\smallskip

Clearly, given any $T_0>0$, for $\varepsilon$ sufficiently small, we have $x_\varepsilon(t)\in V$ for every $t\in (0,T_0)$. Our objective is to construct solutions $u^k$ of the subelliptic wave equation \eqref{e:system} such that  $\|(u^k_0,u^k_1)\|_{\mathcal{H}\times L^2}=1$ and the energy of $u^k(t,\cdot)$ concentrates outside of an open set $V_t$ containing $x_\varepsilon(t)$, i.e.,
\begin{equation*} 
 \int_{M_{1}\backslash V_t}\left(|\partial_tu^k(t,x)|^2+(X_1u_k(t,x))^2+(X_2u_k(t,x))^2\right)d\mu(x)
\end{equation*}
tends to $0$ as $k\rightarrow +\infty$ uniformly with respect to $t\in (0,T_0)$. As a consequence, the observability inequality \eqref{e:strongobs} fails.

\smallskip

The construction of solutions of the free wave equation whose energy concentrates on geodesics is classical in the elliptic (or Riemannian) case: these are the so-called Gaussian beams, for which a construction can be found for example in \cite{ralston1982gaussian}. Here, we adapt this construction to our subelliptic (sub-Riemannian) setting, which does not raise any problem since the normal geodesics we consider stay in the elliptic part of the operator $\Delta$. It may also be directly justified with the theory of propagation of complex Lagrangian spaces (see Section \ref{s:constrgbsmain}).

\smallskip

In the case of general vector fields $X_1,\ldots,X_m$, the existence of spiraling normal geodesics also has to be justified. For that purpose, we first approximate $X_1,\ldots,X_m$ by their \emph{nilpotent approximations}, and we then prove that for the latters, such a family of spiraling normal geodesics exists, as in the Heisenberg case. 

\subsection{Normal geodesics} \label{s:normal}

In this section, we explain in more details what \emph{normal geodesics} are. As said before, they are natural extensions of Riemannian geodesics since they are projections of bicharacteristics. 

\smallskip

We denote by $S_{\phg}^m(T^*((0,T)\times M))$ the set of polyhomogeneous symbols of order $m$ with compact support and by $\Psi_{\phg}^m((0,T)\times M)$ the set of associated polyhomogeneous pseudodifferential operators of order $m$ whose distribution kernel has compact support in $(0,T)\times M$ (see Appendix \ref{a:pseudo}). 

\smallskip

We set $P=\partial_{tt}^2-\Delta\in \Psi_\phg^2((0,T)\times M)$, whose principal symbol is
\begin{equation*}
p_2(t,\tau,x,\xi)=-\tau^2+g^*(x,\xi)
\end{equation*}
with $\tau$ the dual variable of $t$ and $g^*$ the principal symbol of $-\Delta$. For $\xi\in T^*M$, we have (see Appendix \ref{a:pseudo})
\begin{equation*}
g^*=\sum_{i=1}^m h_{X_i}^2.
\end{equation*}
Here, given any smooth vector field $X$ on $M$, we denoted by $h_X$ the Hamiltonian function (momentum map) on $T^*M$ associated with $X$ defined in local $(x,\xi)$-coordinates by $h_X(x,\xi)=\xi(X(x))$. 

\smallskip

In $T^*(\R\times M)$, the Hamiltonian vector field $\vec{p}_2$ associated with $p_2$ is given by $\vec{p}_2f=\{p_2,f\}$ where $\{\cdot,\cdot\}$ denotes the Poisson bracket (see Appendix \ref{a:pseudo}). Since $\vec{p}_2p_2=0$, we get that $p_2$ is constant along the integral curves of $\vec{p}_2$. Thus, the characteristic set $\mathcal{C}(p_2)=\{p_2=0\}$ is preserved by the flow of $\vec{p}_2$. Null-bicharacteristics are then defined as the maximal integral curves of $\vec{p}_2$ which live in $\mathcal{C}(p_2)$.  In other words, the null-bicharacteristics are the maximal solutions of
\begin{equation} \label{e:bicarac1}
\left\lbrace \begin{array}{l}
\dot{t}(s)=-2\tau(s)\,,\\
\dot{x}(s)=\nabla_\xi g^*(x(s),\xi(s))\,,\\
\dot{\tau}(s)=0\,,\\
\dot{\xi}(s)=-\nabla_xg^*(x(s),\xi(s))\,,\\
\tau^2(0)=g^*(x(0),\xi(0)).
\end{array}\right.
\end{equation}
This definition needs to be adapted when the null-bicharacteristic meets the boundary $\partial M$, but in the sequel, we only consider solutions of \eqref{e:bicarac1} on time intervals where $x(t)$ does not reach $\partial M$.

\smallskip

In the sequel, we take $\tau=-1/2$, which gives $g^*(x(s),\xi(s))=1/4$. This also implies that $t(s)=s+t_0$ and, taking $t$ as a time parameter, we are led to solve
\begin{equation} \label{e:bicarac}
\left\lbrace \begin{array}{l}
\dot{x}(t)=\nabla_\xi g^*(x(t),\xi(t))\,,\\
\dot{\xi}(t)=-\nabla_xg^*(x(t),\xi(t))\,,\\
g^*(x(0),\xi(0))=\frac{1}{4}.
\end{array}\right.
\end{equation}
In other words, the $t$-variable parametrizes null-bicharacteristics in a way that they are traveled at speed $1$.

\begin{remark}  \label{r:tangentbundle}
In the subelliptic setting, the co-sphere bundle $S^*M$ can be decomposed as $S^*M=U^*M\cup S\Sigma$, where $U^*M=\{g^*=1/4\}$ is a cylinder bundle, $\Sigma=\{g^*=0\}$ is the characteristic cone and $S\Sigma$ is the sphere bundle of $\Sigma$ (see \cite[Section 1]{de2018spectral}).
\end{remark}

We denote by $\phi_t:S^*M\rightarrow S^*M$ the (normal) geodesic flow defined by $\phi_t(x_0,\xi_0)=(x(t),\xi(t))$, where $(x(t),\xi(t))$ is a solution of the system given by the first two lines of \eqref{e:bicarac} and initial conditions $(x_0,\xi_0)$. Note that any point in $S\Sigma$ is a fixed point of $\phi_t$, and that the other normal geodesics are traveled at speed $1$ since we took $g^*=1/4$ in $U^*M$ (see Remark \ref{r:tangentbundle}).

\smallskip

The curves $x(t)$ which solve \eqref{e:bicarac} are geodesics (i.e. local minimizers) for a sub-Riemannian metric $g$ (see \cite[Theorem 1.14]{montgomery2002tour}).

\subsection{Observability in some regions of phase-space} \label{s:truncatedobsstatement}
We have explained in Section \ref{s:ideas} that the existence of solutions of the subelliptic wave equation \eqref{e:system} concentrated on spiraling normal geodesics is an obstruction to observability in Theorem \ref{t:main}. Our goal in this section is to state a result ensuring observability if one ``removes" in some sense these normal geodesics. 

\smallskip

For this result, we focus on a version of the Heisenberg manifold described in Section \ref{s:ideas} which has \emph{no boundary}. This technical assumption avoids us using boundary microlocal defect measures in the proof, which, in this sub-Riemannian setting, are difficult to handle. As a counterpart, we need to consider solutions of the wave equation with null initial average, in order to get well-posedness.

\smallskip

We consider the Heisenberg group $G$, that is $\R^3$ with the composition law
\begin{equation*}
(x_1,x_2,x_3)\star (x_1',x_2',x_3')=(x_1+x_1',x_2+x_2',x_3+x_3'-x_1x_2').
\end{equation*}
Then $X_1=\partial_{x_1}$ and $X_2=\partial_{x_2}-x_1\partial_{x_3}$ are left invariant vector fields on $G$. Since $\Gamma=\sqrt{2\pi}\mathbb{Z}\times \sqrt{2\pi}\mathbb{Z}\times 2\pi\mathbb{Z}$ is a co-compact subgroup of $G$, the left quotient $M_H=\Gamma \backslash G$ is a compact three dimensional manifold and, moreover, $X_1$ and $X_2$ are well-defined as vector fields on the quotient. We call $M_H$ endowed with the vector fields $X_1$ and $X_2$ the ``Heisenberg manifold without boundary". Finally, we define the Heisenberg Laplacian $\Delta_H=X_1^2+X_2^2$ on $M_H$. Since $[X_1,X_2]=-\partial_{x_3}$, it is a hypoelliptic operator.  We endow $M_H$ with an arbitrary smooth volume $\mu$. 

\smallskip

We introduce the space
\begin{equation*}
L^2_0=\left\{u_0\in L^2(M_H), \ \int_{M_H}u_0\;d\mu=0\right\}
\end{equation*}
and we consider the operator $\Delta_H$ whose domain $D(\Delta_H)$ is the completion in $L^2_0$ of the set of all $u\in C_c^\infty(M_H)$ with null-average for the norm $\|(\text{Id}-\Delta_H) u\|_{L^2}$.  Then, $-\Delta_H$ is definite positive and we consider $(-\Delta_H)^{\frac12}$ with domain $D((-\Delta_H)^{\frac12})=\mathcal{H}_0:=L^2_0 \; \cap \; \mathcal{H}(M_H)$. The wave equation
\begin{equation}  \label{e:wavenullaverage}
\left\lbrace \begin{array}{l}
\partial_{tt}^2u-\Delta_H u=0\, \text{ \ in $\R\times M_H$}\\
(u_{|t=0},\partial_tu_{|t=0})=(u_0,u_1)\in D((-\Delta_H)^{\frac12})\times L^2_0\,
\end{array}\right.
\end{equation}
admits a unique solution $u\in C^0(\R;D((-\Delta_H)^{\frac12}))\;\cap\; C^1(\R;L^2_0)$.

\smallskip

We note that $-\Delta_H$ is invertible in $L^2_0$. The space $\mathcal{H}_0$ is endowed with the norm $\|u\|_{\mathcal{H}}$ (defined in \eqref{e:defmathcalH} and also equal to $\|(-\Delta_H)^{\frac12}u\|_{L^2}$), and its topological dual $\mathcal{H}_0'$ is endowed with the norm $\|u\|_{\mathcal{H}_0'}:=\|(-\Delta_H)^{-\frac{1}{2}}u\|_{L^2}$. 

\smallskip

We note that $g^*(x,\xi)=\xi_1^2+(\xi_2-x_1\xi_3)^2$ and hence the null-bicharacteristics are solutions of
\begin{equation}\label{e:bicharH}
\begin{aligned}
&\dot{x}_1(t)=2\xi_1, &\quad & \dot{\xi}_1(t)=2\xi_3(\xi_2-x_1\xi_3),  \\
 &\dot{x}_2(t)=2(\xi_2-x_1\xi_3), &\quad& \dot{\xi}_2(t)=0,  \\
 &\dot{x}_3(t)=-2x_1(\xi_2-x_1\xi_3), &\quad& \dot{\xi}_3(t)=0.
\end{aligned}
\end{equation}
The spiraling normal geodesics described in Section \ref{s:ideas} correspond to $\xi_{1}=\cos(t/\e)/2$, $\xi_{2}=0$ and $\xi_{3}=1/(2\e)$. In particular, the constant $\xi_{3}$ is a kind of rounding number reflecting the fact that the normal geodesic spirals at a certain speed around the $x_3$ axis. Moreover, $\xi_{3}$ is preserved under the flow (somehow, the Heisenberg flow is completely integrable), and this property plays a key role in the proof of Theorem \ref{t:truncatedobs} below and justifies that we state it only for the Heisenberg manifold (without boundary).

\smallskip

As said above, normal geodesics corresponding to a large momentum $\xi_{3}$ are precisely the ones used to contradict observability in Theorem \ref{t:main}. We  expect to be able to establish observability if we consider only solutions of \eqref{e:system} whose $\xi_{3}$ (in a certain sense) is not too large. This is the purpose of our second main result.

\smallskip

Set 
\begin{equation*} 
V_\varepsilon=\left\{(x,\xi)\in T^*M_H : |\xi_{3}|> \frac{1}{\e}(g_x^*(\xi))^{1/2}\right\}
\end{equation*} 
Note that since $\xi_{3}$ is constant along null-bicharacteristics, $V_\varepsilon$ and its complementary $V_\varepsilon^c$ are invariant under the bicharacteristic equations \eqref{e:bicharH}.

\smallskip

In the next statement, we call horizontal strip the periodization under the action of the co-compact subgroup $\Gamma$ of a set of the form
\begin{equation*}
\{(x_1,x_2,x_3)\;: \; (x_1,x_2)\in[0,\sqrt{2\pi})^2, \; x_3\in I\}
\end{equation*}
where $I$ is a strict open subinterval of $[0,2\pi)$.

\begin{theorem} \label{t:truncatedobs}
Let $B\subset M_H$ be an open subset and suppose that $B$ is sufficiently small, so that $\omega=M_H\backslash B$ contains a horizontal strip. Let $a\in S_{\phg}^0(T^*M_H)$, $a\geq 0$, such that, denoting by $j:T^*\omega\rightarrow T^*M_H$ the canonical injection,
\begin{equation*}
j(T^*\omega)\cup V_\e\subset \text{Supp}(a)\subset T^*M_H,
\end{equation*} 
and in particular $a$ does not depend on time. There exists $\kappa>0$ such that for any $\varepsilon>0$ and any $T\geq \kappa \varepsilon^{-1}$, there holds
\begin{equation} \label{e:obstruncated}
C\|(u(0),\partial_tu(0))\|^2_{\mathcal{H}_0\times L^2_0}\leq \int_0^T|(\Op(a)\partial_tu,\partial_tu)_{L^2}|\;dt\;+\;\|(u(0),\partial_tu(0))\|^2_{L^2_0\times \mathcal{H}_0'}
\end{equation}
for some $C=C(\varepsilon,T)>0$ and for any solution $u\in C^0(\R;D((-\Delta_H)^{\frac12}))\;\cap\; C^1(\R;L^2_0)$ of \eqref{e:wavenullaverage}.
\end{theorem}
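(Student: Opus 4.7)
The plan is a compactness-uniqueness argument via microlocal defect measures (MDM), exploiting the explicit integrability of the Heisenberg bicharacteristic flow \eqref{e:bicharH} and the fact that $\xi_3$ is a first integral. Fix $\varepsilon>0$ and suppose, for contradiction, that \eqref{e:obstruncated} fails for every $C>0$. Then there is a sequence of solutions $(u_n)$ to \eqref{e:wavenullaverage} with unit energy $\|(u_n(0),\partial_tu_n(0))\|^2_{\mathcal{H}_0\times L^2_0}=1$, satisfying
$$\int_0^T\bigl|(\Op(a)\partial_tu_n,\partial_tu_n)_{L^2}\bigr|\,dt\longrightarrow 0 \quad\text{and}\quad \|(u_n(0),\partial_tu_n(0))\|^2_{L^2_0\times\mathcal{H}_0'}\longrightarrow 0.$$
The compact embedding $\mathcal{H}_0\hookrightarrow L^2_0$ (subelliptic Rellich) combined with the second vanishing forces $(u_n,\partial_tu_n)\rightharpoonup 0$ weakly in the energy space, so $\partial_tu_n$ is bounded and weakly convergent to $0$ in $L^2((0,T)\times M_H)$. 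From this sequence I would extract a non-trivial microlocal defect measure $\mu$ on $T^*((0,T)\times M_H)\setminus 0$, whose positive total mass is inherited from the unit conserved energy.

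Next, standard MDM manipulations yield: $\mu$ is a non-negative Radon measure supported on the characteristic variety $\{-\tau^2+g^*(x,\xi)=0\}$, and invariant under the Hamiltonian flow of $p_2=-\tau^2+g^*$ (real principal type propagation, which applies since $\text{Supp}(\mu)$ stays away from the cone $\{g^*=0\}$ by the $V_\varepsilon$ cutoff). The vanishing of the first right-hand-side term translates into $\int_0^T\!\int a\,d\mu\,dt=0$, so $\mu$ vanishes on the interior of $\text{Supp}(a)\supset j(T^*\omega)\cup V_\varepsilon$. Normalizing $g^*=1/4$ on the characteristic set, one obtains $\text{Supp}(\mu)\subset\{x\in\overline{B}\}\cap\{|\xi_3|\leq 1/(2\varepsilon)\}$.

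The main geometric step is to integrate \eqref{e:bicharH} explicitly. For $\xi_3\neq 0$, setting $p=\xi_2-x_1\xi_3$ gives a harmonic oscillator of frequency $2\xi_3$ and amplitude $(g^*)^{1/2}=1/2$ for $(\xi_1,p)$, so $(x_1,x_2)$ traces a circle of radius $1/(2|\xi_3|)$, while the identity $\dot{x}_3=-2x_1p$ decomposes into a zero-mean oscillation plus a linear drift of rate $-1/(4\xi_3)$. On $V_\varepsilon^c$ the drift magnitude is $\geq \varepsilon/2$, while the oscillatory piece stays uniformly bounded on any compact $\xi_3$-range avoiding $0$. Picking $\kappa$ large enough and $T\geq\kappa/\varepsilon$, the $x_3$-coordinate of any such bicharacteristic varies by more than $2\pi$, so the projection must visit every value in $\mathbb{T}$ and in particular enters the horizontal strip $\{x_3\in I\}\subset\omega$. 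By flow invariance of $\text{Supp}(\mu)$, this contradicts $\text{Supp}(\mu)\subset\{x\in\overline{B}\}$, so $\mu$ cannot charge $\{\xi_3\neq 0\}$.

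The principal obstacle is the degenerate stratum $\{\xi_3=0\}$, on which the drift computation collapses: the flow reduces to $\dot{x}_1=2\xi_1$, $\dot{x}_2=2\xi_2$, $\dot{x}_3=-2x_1\xi_2$ with $\xi_1^2+\xi_2^2=1/4$, and exceptional orbits with $\xi_2=0$, or with $\xi_1=0$ and $x_1(0)=0$, keep $x_3$ constant. I would handle this stratum by exploiting that $\partial_{x_3}$ commutes with $\Delta_H$ and Fourier-decomposing $u_n=\sum_{k\in\mathbb{Z}}u_n^{(k)}$ in $x_3$. The $\xi_3=0$ portion of $\mu$ is fed exclusively by the zero mode $u_n^{(0)}$, which depends only on $(x_1,x_2)$ and solves the flat wave equation on $\mathbb{T}^2$. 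Since the horizontal strip projects onto all of $\mathbb{T}^2$, the observation term controls the full 2D energy of $u_n^{(0)}$; an elementary 2D argument then rules out any mass of $\mu$ on $\{\xi_3=0\}$. Combining the two strata yields $\mu\equiv 0$, contradicting its positive total mass and completing the proof.
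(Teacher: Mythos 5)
Your strategy — contradiction, extraction of a microlocal defect measure, flow invariance, elliptic localization away from $\Sigma$ by the $V_\varepsilon$-cutoff, and then a geometric argument from the explicit integrated bicharacteristics \eqref{e:bicharH} — is precisely the route the paper takes, up to and including the observation that $\xi_3$ is a conserved quantity and that on $V_\varepsilon^c$ the $x_3$-drift has magnitude $\geq\varepsilon/2$. Where you and the paper diverge is in the treatment of the degenerate part of the analysis, and there your proposal has a genuine gap.

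First, notice that your own phrasing already flags the problem: you say the oscillatory piece of $x_3(t)$ is ``uniformly bounded on any compact $\xi_3$-range avoiding $0$.'' But $V_\varepsilon^c=\{|\xi_3|\leq \varepsilon^{-1}(g^*)^{1/2}\}$ does \emph{not} avoid $0$; it contains a whole neighbourhood of $\{\xi_3=0\}$ once one normalizes $g^*=1/4$. The oscillatory terms in the explicit formula for $x_3(t)$ have amplitude $O(\xi_3^{-2})$ and period $O(|\xi_3|^{-1})$, so for $|\xi_3|$ small the time for the drift to carry $x_3$ through a full period of $\mathbb{T}$ is \emph{not} bounded by $\kappa\varepsilon^{-1}$ uniformly over $V_\varepsilon^c$: a short computation with $\xi_2=0$, $x(0)=0$ gives $x_3(T)\sim\frac{2}{3}\xi_3T^3$ for $|\xi_3 T|\ll1$, which needs $T\gtrsim|\xi_3|^{-1/3}\to\infty$ as $\xi_3\to 0$. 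So the geometric claim you rely on — every bicharacteristic with $\xi\in V_\varepsilon^c$ meets $\omega$ within time $\kappa\varepsilon^{-1}$ — is not established by the drift estimate alone. (The paper states the same claim and is equally terse here, but you need more than a cross-reference.)

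Second, your fix for the stratum $\{\xi_3=0\}$ does not work as described. The assertion that ``the $\xi_3=0$ portion of $\mu$ is fed exclusively by the zero mode $u_n^{(0)}$'' is incorrect: the defect measure lives on the \emph{sphere bundle} $\mathcal{C}(p)/(0,\infty)$, so what matters is the ratio $\xi_3/(g^*)^{1/2}$, not the absolute Fourier frequency in $x_3$. Sequences built from nonzero modes $u_n^{(k_n)}$ with $k_n$ growing slowly compared to $\lambda_n^{1/2}$ (for instance $k_n$ fixed and the transverse oscillator quantum number $l_n\to\infty$, giving eigenvalue $|k_n|(2l_n+1)$) also concentrate their mass at $\xi_3=0$ on the sphere. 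Moreover, for $k\neq 0$ the modes $u_n^{(k)}$ are not $\Gamma$-periodic as functions of $(x_1,x_2)$ alone (the lattice action twists $x_3$ by $x_2$), so the reduction to a flat 2D wave equation only makes sense for $k=0$; the nonzero modes that feed $\xi_3=0$ escape your argument entirely. Finally, even restricted to the genuine zero mode, the exceptional $\xi_3=0$ bicharacteristics you identify ($\xi_2=0$, $x_3$ constant) are circles in $x_1$ at fixed $(x_2,x_3)$, and nothing in the hypothesis ``$\omega$ contains a horizontal strip'' forbids such a circle from lying inside $B$. So the uniqueness step would have to use a finer property of the observation set than stated, and the proposal does not supply it. You have correctly located the sensitive spot, but the Fourier-decomposition patch does not close it.
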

The term $\|(u_0,u_1)\|^2_{L^2\times \mathcal{H}_0'}$ in the right-hand side of \eqref{e:obstruncated} cannot be removed, i.e. our statement only consists in a \emph{weak} observability inequality. Indeed, the usual way to remove such terms is to use a unique continuation argument for eigenfunctions $\varphi$ of $\Delta$, but here it does not work since 
$\Op(a)\varphi=0$ does not imply in general that $\varphi\equiv0$ in the whole manifold, even if the support of $a$ contains $j(T^*\omega)$ for some non-empty open set $\omega$: in some sense, there is no ``pseudodifferential unique continuation argument''.

\subsection{Comments on the existing literature} \label{s:links}

\paragraph{Elliptic and subelliptic waves.} The exact controllability/observability of the elliptic wave equation is known to be almost equivalent to the so-called Geometric Control Condition (GCC) (see \cite{bardos1992sharp}) that any geodesic enters the control set $\omega$ within time $T$. In some sense, our main result is that GCC is not verified in the subelliptic setting, as soon as $M\backslash \omega$ contains in its interior a point $x$ at which $\Delta$ is ``truly subelliptic''. For the elliptic wave equation, in many geometrical situations, there exists a minimal time $T_0>0$ such that observability holds only for $T\geq T_0$: when there exists a geodesic $\gamma:(0,T_0)\rightarrow M$ traveled at speed $1$ which does not meet $\overline{\omega}$, one constructs a sequence of initial data $(u^k_0,u^k_1)_{k\in\N^*}$ of the wave equation whose associated microlocal defect measure is concentrated on $(x_0,\xi_0)\in S^*M$ taken to be the initial conditions for the null-bicharacteristic  projecting onto $\gamma$. Then, the associated sequence of solutions $(u^k)_{k\in\N^*}$ of the wave equation has an associated microlocal defect measure $\nu$ which is invariant under the geodesic flow: $\vec{p} \nu=0$ where $\vec{p}$ is the Hamiltonian flow associated to the principal symbol $p$ of the wave operator. In particular, denoting by $\pi:T^*M\rightarrow M$ the canonical projection, $\pi_*\nu$ gives no mass to $\omega$ since $\gamma$ is contained in $M\setminus \overline{\omega}$, and this proves that observability cannot hold.

\smallskip

In the subelliptic setting, the invariance property $\vec{p}\nu=0$ does not give any information on $\nu$ on the characteristic manifold $\Sigma$, since $\vec{p}=-2\tau\partial_t+\vec{g}^*$ vanishes on $\Sigma$. This is related to the lack of information on propagation of singularities in this characteristic manifold, see the main theorem of \cite{lascar1982}. If one instead tries to use the propagation of the microlocal defect measure for subelliptic half-wave equations, one is immediately confronted with the fact that $\sqrt{-\Delta}$ is not a pseudodifferential operator near $\Sigma$. 

\smallskip

This is why, in this paper, we used only the elliptic part of the symbol $g^*$ (or, equivalently, the strictly hyperbolic part of $p_2$), where the propagation properties can be established, and then the problem is reduced to proving geometric results on normal geodesics.

\paragraph{Subelliptic Schr\"odinger equations.} The recent article \cite{burq2019time} deals with the same observability problem, but for subelliptic Schr\"odinger equations: namely, the authors consider the (Baouendi)-Grushin Schr\"odinger equation $i\partial_tu-\Delta_G u=0$, where $u\in L^2((0,T)\times M_G)$, $M_G=(-1,1)_x\times \mathbb{T}_y$ and $\Delta_G=\partial_{x}^2+x^2\partial_{y}^2$ is the  Baouendi-Grushin Laplacian. Given a control set of the form $\omega=(-1,1)_x\times \omega_y$, where $\omega_y$ is an open subset of $\mathbb{T}$, the authors prove the existence of a minimal time of control $\mathcal{L}(\omega)$ related to the maximal height of a horizontal strip contained in $M_G\backslash \omega$. The intuition is that there are solutions of the  Baouendi-Grushin Schr\"odinger equation which travel along the degenerate line $x=0$ at a finite speed: in some sense, along this line, the Schr\"odinger equation behaves like a classical (half)-wave equation. What we want here is to explain in a few words why there is a minimal time of observability for the Schr\"odinger equation, while the wave equation is never observable in finite time as shown by Theorem \ref{t:main}.

\smallskip

The plane $\R^2_{x,y}$ endowed with the vector fields $\partial_x$ and $x\partial_y$ also admits normal geodesics similar to the $1$-parameter family $q_\varepsilon$, namely, for $\e>0$, 
\begin{align}
x(t)&=\varepsilon\sin(t/\varepsilon) \nonumber\\
y(t)&=\varepsilon(t/2-\varepsilon\sin(2t/\varepsilon)/4)\nonumber 
\end{align}
These normal geodesics, denoted by $\gamma_\varepsilon$, also ``spiral" around the line $x=0$ more and more quickly as $\varepsilon\rightarrow 0$, and so we might expect to construct solutions of the  Baouendi-Grushin Schr\"odinger equation with energy concentrated along $\gamma_\varepsilon$, which would contradict observability when $\varepsilon\rightarrow 0$ as above for the Heisenberg wave equation. 

\smallskip

However, we can convince ourselves that it is not possible to construct such solutions: in some sense, the dispersion phenomena of the Schr\"odinger equation exactly compensate the lengthening of the normal geodesics $\gamma_\varepsilon$ as $\varepsilon\rightarrow 0$ and explain that even these Gaussian beams may be observed in $\omega$ from a certain minimal time $\mathcal{L}(\omega)>0$ which is uniform in $\varepsilon$.

\smallskip

To put this argument into a more formal form, we consider the solutions of the bicharacteristic equations for the  Baouendi-Grushin Schr\"odinger equation $i\partial_tu-\Delta_Gu=0$ given by
\begin{align}
x(t)&=\varepsilon\sin(\xi_yt) \nonumber\\
y(t)&=\varepsilon^2\xi_y\left(\frac{t}{2}-\frac{\sin(2\xi_yt)}{4\xi_y}\right)\nonumber \\
\xi_x(t)&=\varepsilon\xi_y\cos(\xi_yt)\nonumber \\
\xi_y(t)&=\xi_y. \nonumber
\end{align}
It follows from the hypoellipticity of $\Delta_G$ (see \cite[Section 3]{burq2019time} for a proof) that 
\begin{equation*}
|\xi_y|^{1/2}\lesssim \sqrt{-\Delta_G}=(|\xi_x|^2+x^2|\xi_y|^2)^{1/2}=\varepsilon|\xi_y|.
\end{equation*}
Therefore $\varepsilon^2|\xi_y|\gtrsim 1$, and hence $|y(t)|\gtrsim t$, independently from $\varepsilon$ and $\xi_y$. This heuristic gives the intuition that a minimal time $\mathcal{L}(\omega)$ is required to detect all solutions of the  Baouendi-Grushin Sch\"odinger equation from $\omega$, but that for $T_0>\mathcal{L}(\omega)$, no solution is localized enough to stay in $M\backslash \omega$ during the time interval $(0,T_0)$. Roughly speaking, the frequencies of order $\xi_y$ travel at speed $\sim\xi_y$, which is typical for a dispersion phenomenon. This picture is very different from the one for the wave equation (which we consider in this paper) for which no dispersion occurs. 

\smallskip

With similar ideas, in \cite{Letrouit20}, the interplay between the subellipticity effects measured by the non-holonomic order of the distribution $\mathcal{D}$ (see Section \ref{s:nilpotentization}) and the strength of dispersion of Schr\"odinger-type equations was investigated. More precisely, for $\Delta_\gamma=\partial_x^2+|x|^{2\gamma}\partial_y^2$ on $M=(-1,1)_x\times \mathbb{T}_y$, and for $s\in\mathbb{N}$, the observability properties of the Schr\"odinger-type equation $(i\partial_t-(-\Delta_\gamma)^s)u=0$ were shown to depend on the value $\kappa=2s/(\gamma+1)$. In particular it is proved that, for $\kappa<1$, observability fails for any time, which is consistent with the present result, and that for $\kappa=1$, observability holds only for sufficiently large times, which is consistent with the result of \cite{burq2019time}. The results of \cite{Letrouit20} are somehow Schr\"odinger analogues of the results of \cite{BCG14} which deal with a similar problem for the Baouendi-Grushin heat equation.

\paragraph{General bibliographical comments.} Control of subelliptic PDEs has attracted much attention in the last decade. Most results in the literature deal with subelliptic parabolic equations, either the  Baouendi-Grushin heat equation (\cite{koenig2017non}, \cite{duprez2018control}, \cite{beauchard20}) or the heat equation in the Heisenberg group (\cite{beauchard2017heat}, see also references therein). The paper \cite{burq2019time} is the first to deal with a subelliptic Schr\"odinger equation and the present work is the first to handle exact controllability of subelliptic wave equations.

\smallskip

A slightly different problem is the \emph{approximate} controllability of hypoelliptic PDEs, which has been studied in \cite{laurent2017tunneling} for hypoelliptic wave and heat equations. Approximate controllability is weaker than exact controllability, and it amounts to proving ``quantitative" unique continuation results for hypoelliptic operators. For the hypoelliptic wave equation, it is proved in \cite{laurent2017tunneling} that for $T>2 \sup_{x\in M}(\text{dist}(x,\omega))$ (here, dist is the sub-Riemannian distance), the observation of the solution on $(0,T)\times\omega$ determines the initial data, and therefore the whole solution.

\subsection{Organization of the paper} \label{s:strategy}

In Section \ref{s:constrgbsmain}, we construct exact solutions of the subelliptic wave equation \eqref{e:system} concentrating on any given normal geodesic. First, in Section \ref{s:gbapprox}, we show that, given any normal geodesic $t\mapsto x(t)$ which does not hit $\partial M$ in the time interval $(0,T)$, it is possible to construct a sequence $(v_k)_{k\in\mathbb{N}}$ of \emph{approximate solutions} of \eqref{e:system} whose energy concentrates along $t\mapsto x(t)$ during the time interval $(0,T)$ as $k\rightarrow +\infty$. By  ``approximate", we mean here that $\partial_{tt}^2v_k-\Delta v_k$ is small, but not necessarily exactly equal to $0$. In Section \ref{s:gbapprox}, we provide a first proof for this construction using the classical propagation of complex Lagrangian spaces. An other proof using a Gaussian beam approach is provided in Appendix \ref{a:approxgb}. Then, in Section \ref{s:gbexact}, using this sequence $(v_k)_{k\in\mathbb{N}}$, we explain how to construct a sequence $(u_k)_{k\in\mathbb{N}}$ of \emph{exact} solutions of $(\partial_{tt}^2-\Delta)u=0$ in $M$ with the same concentration property along the normal geodesic $t\mapsto x(t)$.

\smallskip

In Section \ref{s:spiralingmain}, we prove the existence of normal geodesics which spiral in $M$, spending an arbitrarily large time in $M\backslash \omega$. These normal geodesics generalize the example described in Section \ref{s:ideas} for the Heisenberg manifold with boundary. The proof proceeds in two steps: first, we show that it is sufficient to prove the result in the so-called ``nilpotent case" (Section \ref{s:spiralinggeneral}), and then we prove it in the nilpotent case (Section \ref{s:spiralingnilpotent}).

\smallskip

In Section \ref{s:concl}, we use the results of Section \ref{s:constrgbsmain} and Section \ref{s:spiralingmain} to conclude the proof of Theorem \ref{t:main}. In Section \ref{s:contr}, we deduce Corollary \ref{c:contr} by a duality argument. Finally, in Section \ref{s:truncatedobs}, we prove Theorem \ref{t:truncatedobs}.

\paragraph{Acknowledgments.} I warmly thank my PhD advisor Emmanuel Tr\'elat for mentioning this problem to me, for his constant support and his numerous suggestions during the preparation of this paper. Many thanks also to Andrei Agrachev who helped me correct a flaw in the proof of Proposition \ref{p:existencegeod}. I thank Yves Colin de Verdi\`ere, Luc Hillairet, Armand Koenig, Luca Rizzi, Clotilde Fermanian-Kammerer, Maciej Zworski, Fr\'ed\'eric Jean, Jean-Paul Gauthier, Matthieu L\'eautaud and Ludovic Sacchelli for interesting discussions related to this problem. Finally, I am very grateful to an anonymous referee whose questions and suggestions allowed me to considerably improve the readability of the present paper. I was partially supported by the grant ANR-15-CE40-0018 of the ANR (project SRGI).

\section{Gaussian beams along normal geodesics} \label{s:constrgbsmain}

\subsection{Construction of sequences of approximate solutions} \label{s:gbapprox}
We consider a solution $(x(t),\xi(t))_{t\in [0,T]}$ of \eqref{e:bicarac} on $M$. We shall describe the construction of solutions of 
\begin{equation} \label{e:sRwave}
\partial_{tt}^2u-\Delta u=0
\end{equation}
on $[0,T]\times M$ with energy
\begin{equation*}
E(u(t,\cdot)):=\frac12\left(\|\partial_tu(t,\cdot)\|_{L^2(M,\mu)}^2+\|u(t,\cdot)\|_{\mathcal{H}}^2\right)
\end{equation*}
concentrated along $x(t)$ for $t\in [0,T]$. The following proposition, which is inspired by \cite{ralston1982gaussian} and \cite{macia2002lack}, shows that it is possible, at least for approximate solutions of \eqref{e:sRwave}.

\begin{proposition} \label{p:approxgb}
Fix $T>0$ and let $(x(t),\xi(t))_{t\in [0,T]}$ be a solution of \eqref{e:bicarac} (in particular $g^*(x(0),\xi(0))=1/4$) which does not hit the boundary $\partial M$ in the time-interval $(0,T)$. Then there exist $a_0,\psi\in C^2((0,T)\times M)$ such that, setting, for $k\in\mathbb{N}$,
\begin{equation*}
v_k(t,x)=k^{\frac{n}{4}-1}a_0(t,x)e^{ik\psi(t,x)}
\end{equation*}
the following properties hold:
\begin{itemize}
\item $v_k$ is an approximate solution of \eqref{e:sRwave}, meaning that
\begin{equation} \label{e:boundedenergy}
\|\partial_{tt}^2v_k-\Delta v_k\|_{L^1((0,T);L^2(M))}\leq Ck^{-\frac12}.
\end{equation}
\item The energy of $v_k$ is bounded below with respect to $k$ and $t\in [0,T]$: 
\begin{equation} \label{e:convenergyforv}
\exists A>0, \forall t\in [0,T],\quad  \liminf_{\substack{k\rightarrow +\infty}}E(v_k(t,\cdot))\geq A.
\end{equation}
\item The energy of $v_k$ is small off $x(t)$: for any $t\in [0,T]$, we fix $V_t$ an open subset of $M$ for the initial topology of $M$, containing $x(t)$, so that the mapping $t\mapsto V_t$ is continuous ($V_t$ is chosen sufficiently small so that this makes sense in a chart). Then
\begin{equation} \label{e:decreaseenergyforv}
\sup_{t\in [0,T]}\int_{M\backslash V_t} \left(|\partial_tv_k(t,x)|^2+\sum_{j=1}^m(X_jv_k(t,x))^2\right)d\mu(x)\underset{k\rightarrow +\infty}{\rightarrow} 0.
\end{equation}
\end{itemize}
\end{proposition}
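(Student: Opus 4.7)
I would follow a Gaussian beam / WKB construction, viewed through the lens of propagation of complex Lagrangian spaces as suggested by the authors. Substituting the ansatz into $P = \partial_{tt}^2 - \Delta$ and collecting powers of $k$, the $k^2$-term vanishes provided $\psi$ satisfies the eikonal equation
\begin{equation*}
(\partial_t\psi)^2 - g^*(x, d_x\psi) = 0,
\end{equation*}
and the $k^1$-term vanishes provided $a_0$ satisfies a first-order transport equation along the Hamilton flow of $p_2$. The key preliminary observation is that since $g^*(x(0),\xi(0))=1/4$, the bicharacteristic stays in $\{g^* = 1/4\}$, and by hypothesis it stays at positive distance from $\partial M$ throughout $[0,T]$, so we are working away from the characteristic cone $\Sigma = \{g^*=0\}$ and from the boundary. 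Near the trajectory the symbol $g^*$ is smooth and strictly positive, so locally we are exactly in the situation of Ralston's classical construction.

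\textbf{Constructing the complex phase.} In a fixed tubular neighborhood of $\{x(t) : t \in [0,T]\}$ and in local coordinates, I would seek
\begin{equation*}
\psi(t,x) = \psi_0(t) + \xi(t) \cdot (x - x(t)) + \tfrac{1}{2}(x - x(t))^\top M(t)(x - x(t))
\end{equation*}
with $\psi_0(t) \in \C$ and $M(t)$ a symmetric complex $n \times n$ matrix. Requiring the eikonal equation to vanish to second order along $x(t)$ yields an elementary ODE for $\psi_0$; the bicharacteristic equations for $(x(t),\xi(t))$, already satisfied by hypothesis; and a matrix Riccati equation for $M(t)$ whose coefficients are the second derivatives of $g^*$ at $(x(t),\xi(t))$. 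Identifying $\mathrm{graph}(M(t))$ as a complex Lagrangian subspace of $T_{(x(t),\xi(t))}(T^*M)\otimes\C$, this Riccati equation is exactly the equation of its propagation by the linearized Hamilton flow of $p_2$. The real symplectic nature of this flow preserves both the Lagrangian condition and the positivity $\mathrm{Im}\,M > 0$ globally on $[0,T]$, which is the key lemma.

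\textbf{Amplitude and verification.} Given $\psi$, the transport equation restricted to $x(t)$ becomes a linear first-order ODE for the scalar $a_0(t, x(t))$, which I would solve with a nonzero initial value, extend to a neighborhood (for instance, taking $a_0$ constant in the transverse directions), and finally multiply by a smooth cutoff supported where the quadratic expansion of $\psi$ is valid. Writing $P v_k = k^{n/4-1}R_k \, e^{ik\psi}$, the eikonal and transport cancellations give $R_k = O(k^2 |x-x(t)|^3) + O(k|x-x(t)|) + O(1)$; against the weight $e^{-2k\,\mathrm{Im}\,\psi} \lesssim e^{-ck|x-x(t)|^2}$, each factor $|x-x(t)|^j$ contributes $k^{-j/2}$ in an $L^2$-Gaussian integral, producing $\|Pv_k\|_{L^2} = O(k^{-1/2})$ uniformly in $t$, hence \eqref{e:boundedenergy}. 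The lower bound \eqref{e:convenergyforv} follows from the dominant behavior $\partial_t v_k \sim ik\,\partial_t\psi\cdot v_k$ and $X_j v_k \sim ik\,(X_j\psi)\cdot v_k$, together with the identity $|\partial_t\psi|^2 + \sum_j (X_j\psi)^2 = 1/2$ on the geodesic (from the eikonal equation and $g^*(x(t),\xi(t))=1/4$), so the leading $L^2$-integrand is strictly positive at $(t,x(t))$ and standard Gaussian integration yields $E(v_k(t,\cdot)) \to A > 0$. Finally, \eqref{e:decreaseenergyforv} is immediate: for $x \notin V_t$, $\mathrm{Im}\,\psi(t,x) \geq c_{V_t} > 0$ by strict positivity of the quadratic form on nonzero transverse vectors, so $|e^{ik\psi}| \leq e^{-k c_{V_t}}$ decays faster than any polynomial rate.

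\textbf{Main obstacle.} The genuinely delicate point, where the above plan could in principle fail, is the \emph{global} existence on $[0,T]$ of a symmetric $M(t)$ solving the matrix Riccati equation with $\mathrm{Im}\, M(t)$ positive definite. Short-time ODE theory alone is not enough, since Riccati solutions typically blow up in finite time; positivity is preserved globally precisely because the Grassmannian of positive complex Lagrangian planes is stable under real linear symplectic maps, hence under the linearized Hamilton flow of $p_2$ at all times.
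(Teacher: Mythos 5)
Your proposal is correct and follows essentially the same route as the paper: WKB ansatz with a complex quadratic phase, global solvability of the matrix Riccati equation via invariance of the positive complex Lagrangian Grassmannian under the real linearized Hamilton flow, a transport equation for the amplitude, and Gaussian integral estimates for the three conclusions. The paper splits this into a Lagrangian-space argument (Section 2.1, quoting H\"ormander's Proposition 21.5.9) and an explicit Riccati computation (Appendix B, following Ralston), while you blend the two viewpoints; you correctly identify the global Riccati bound as the crux, and your quantitative estimates $O(k^2|x-x(t)|^3)+O(k|x-x(t)|)+O(1)$ against the Gaussian weight match the paper's use of Ralston's lemma with $S=3,1$.
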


\begin{remark}
The construction of approximate solutions such as the ones provided by Proposition \ref{p:approxgb} is usually done for strictly hyperbolic operators, that is operators with a principal symbol $p_m$ of order $m$ such that the polynomial $f(s)=p_m(t,q,s,\xi)$ has $m$ distinct real roots when $\xi\neq 0$ (see for example \cite{ralston1982gaussian}). The operator $\partial_{tt}^2-\Delta$ is not strictly hyperbolic because $g^*$ is degenerate, but our proof shows that the same construction may be adapted without difficulty to this operator \emph{along normal bicharacteristics}. This is due to the fact that along normal bicharacteristics, $\partial_{tt}^2-\Delta$ is indeed strictly hyperbolic (or equivalently, $\Delta$ is elliptic). It was already noted by \cite{ralston1982gaussian} that the construction of Gaussian beams could be done for more general operators than strictly hyperbolic ones, and that the differences between the strictly hyperbolic case and more general cases arise while dealing with propagation of singularities. Also, in \cite[Chapter 24.2]{hormander2007analysis}, it was noticed that ``since only microlocal properties of $p_2$ are important, it is easy to see that hyperbolicity may be replaced by $\nabla_\xi p_2\neq 0$''.
\end{remark}

Hereafter we provide two proofs of Proposition \ref{p:approxgb}. The first proof is short and is actually quite straightforward for readers acquainted with the theory of propagation of complex Lagrangian spaces, once one has noticed that the solutions of \eqref{e:bicarac} which we consider live in the \emph{elliptic part} of the principal symbol of $-\Delta$. For the sake of completeness, and because this also has its own interest, we provide in Appendix \ref{a:approxgb} a second proof, longer but more elementary and accessible without any knowledge of complex Lagrangian spaces; it relies on the construction of Gaussian beams in the subelliptic context. The two proofs follow parallel paths, and indeed, the computations which are only sketched in the first proof are written in full details in the second proof, given in Appendix \ref{a:approxgb}.

\begin{proof}[First proof of Proposition \ref{p:approxgb}]
The construction of Gaussian beams, or more generally of a WKB approximation, is related to the transport of complex Lagrangian spaces along bicharacteristics, as reported for example in \cite[Chapter 24.2]{hormander2007analysis} and \cite[Volume I, Part I, Chapter 1.2]{ivrii2019microlocal}. Our proof follows the lines of \cite[pages 426-428]{hormander2007analysis}.

\smallskip

A usual way to solve (at least approximately) evolution equations of the form
\begin{equation} \label{e:semicl}
Pu=0
\end{equation}
where $P$ is a hyperbolic second order differential operator with real principal symbol and $C^\infty$ coefficients is to search for oscillatory solutions 
\begin{equation} \label{e:ansatz}
v_k(x)=k^{\frac{n}{4}-1}a_0(x)e^{ik\psi(x)}.
\end{equation}
 In this expression as in the rest of the proof, we suppress the time variable $t$. Thus, we use $x=(x_0,x_1,\ldots,x_n)$ where $x_0=t$ in the earlier notations, and we set $x'=(x_1,\ldots,x_n)$. Similarly, we take the notation $\xi=(\xi_0,\xi_1,\ldots,\xi_n)$ where $\xi_0=\tau$ previously, and $\xi'=(\xi_1,\ldots,\xi_n)$. The bicharacteristics are parametrized by $s$ as in \eqref{e:bicarac1}, and without loss of generality, we only consider bicharacteristics with $x(0)=0$ at $s=0$, which implies in particular $x_0(s)=s$ because of our choice $\tau^2(s)=g^*(x(s),\xi(s))=1/4$. 
 
 \smallskip
 
Taking charts of $M$, we can assume $M\subset\R^n$. The precise argument for reducing to this case is written at the end of Appendix \ref{a:approxgb}. Also, in the sequel, $P=\partial_{tt}^2-\Delta$.

\smallskip

Plugging the Ansatz \eqref{e:ansatz} into \eqref{e:semicl}, we get
\begin{equation} \label{e:opondesvk1}
Pv_k=(k^{\frac{n}{4}+1}A_1+k^{\frac{n}{4}}A_2+k^{\frac{n}{4}-1}A_3)e^{ik\psi}
\end{equation}
with 
\begin{align}
A_1(x)&=p_2\left(x,\nabla\psi(x)\right)a_0(x) \nonumber \\
A_2(x)&=La_0(x)\label{e:A2} \nonumber\\
A_3(x)&=\partial_{tt}^2a_0(x)-\Delta a_0(x).\nonumber
\end{align}
and $L$ is a transport operator given by 
\begin{equation} \label{e:La01}
La_0=\frac{1}{i}\sum_{j=0}^n\frac{\partial p_2}{\partial \xi_j}\left(x,\nabla\psi(x)\right)\frac{\d a_0}{\d x_j}+\frac{1}{2i}\left(\sum_{j,k=0}^n\frac{\partial^2p_2}{\partial\xi_j\partial\xi_k}\left(x,\nabla\psi(x)\right)\frac{\partial^2\psi}{\partial x_j\partial x_k}\right)a_0.
\end{equation}
In order for $v_k$ to be an approximate solution of $P$, we are first led to cancel the higher order term in \eqref{e:opondesvk1}, i.e.,
\begin{equation} \label{e:cancelfirstterm}
f(x):=p_2(x,\nabla\psi(x))=0
\end{equation}
which we solve for initial conditions 
\begin{equation} \label{e:initialpsi}
\psi(0,x')=\psi_0(x'), \qquad \nabla\psi_0(0)=\xi'(0) \quad \text{and}\quad \psi_0(0)=0
\end{equation}
 (i.e., we fix such a $\psi_0$, and then we solve \eqref{e:cancelfirstterm} for $\psi$). Indeed, it will be sufficient for our purpose for \eqref{e:cancelfirstterm} to be verified at second order along the curve $x(s)$, i.e., $D^\alpha_xf(x(s))=0$ for any $|\alpha|\leq 2$ and any $s$. For that, we first notice that the choice $\nabla\psi(x(s))=\xi(s)$ ensures that \eqref{e:cancelfirstterm} holds at orders $0$ and $1$ along the curve $s\mapsto x(s)$ (see Appendix \ref{a:approxgb} for detailed computations). Now, we explain how to choose $D^2\psi(x(s))$ adequately in order for \eqref{e:cancelfirstterm} to hold at order $2$.
 
 \smallskip

 We use the decomposition of $p_2$ into
\begin{equation*}
p_2(x_0,x',\xi_0,\xi')=-(\xi_0-r(x',\xi'))(\xi_0+r(x',\xi'))+R(x',\xi')
\end{equation*}
where $r=\sqrt{g^*}$ in a conic neighborhood of $(0,\xi(0))$. Note that $\sqrt{g^*}$ is smooth in small conic neighborhoods of $(0,\xi(0))$ since $g^*(0,\xi(0))=1/4\neq0$. Indeed, $g^*$ is elliptic along the whole bicharacteristic since $g^*(x(t),\xi(t))=1/4$ is preserved by the bicharacteristic flow. The rest term $R(x',\xi')$ is smooth and microlocally supported far from the bicharacteristic, i.e., $R(x',\xi')=0$ for any $(x',\xi')\in T^*M$ in a conic neighborhood of $(x'(s),\xi'(s))$ for $s\in [0,T]$.

\smallskip

We consider the bicharacteristic $\gamma_+$ starting at $(0,0,r(0,\xi'(0)),\xi'(0))$ and the bicharacteristic $\gamma_-$ starting at $(0,0,-r(0,\xi'(0)),\xi'(0))$.

\smallskip

We denote by $\Phi^{\pm}(x_0,y',\eta')$ the solution of the Hamilton equations with Hamiltonian $H_\pm(x_0,x',\xi')=\xi_0\mp r(x',\xi')$ and initial datum $(x',\xi')=(y',\eta')$ at $x_0=0$. In other words, $\Phi^\pm(x_0,y',\eta')=e^{x_0\vec{H}_{\pm}}(0,y',\eta')$.  Then, for any $s$, $\Phi(s,\cdot)$ is well-defined and symplectic from a neighborhood of $(0,\xi'(0))$ to a neighborhood of $H_\pm(s,0,\xi'(0))$.

\smallskip

The solution $\psi(s,\cdot)$ of \eqref{e:cancelfirstterm} and \eqref{e:initialpsi} is equal to $0$ on $\gamma_\pm$ and $\nabla\psi(s,\cdot)$ is obtained by the transport of the values of $\nabla\psi_0$ by $\Phi^\pm(s,\cdot)$. In other words, to compute $\nabla\psi(s,\cdot)$, one transports the Lagrangian sub-space $\Lambda_0=\{(x',\nabla\psi_0(x'))\}$ along the Hamiltonian flow $\vec{H}_\pm$ during a time $s$, which yields $\Lambda_s\subset T^*M$, and then, if possible, one writes $\Lambda_s$ under the form $\{(x',\nabla_{x'}\psi(s,x'))\}$, which gives $\nabla_{x'}\psi(s,x')$. The trouble is that the solution is only local in time:  when $x'\mapsto \pi(\Phi^\pm(s,x',\nabla\psi_0(x')))$ ceases to be a diffeomorphism (conjugate point), where $\pi:T^*M\rightarrow M$ is the canonical projection, we see that the process described above does not work (appearance of caustics). In the language of Lagrangian spaces, $\Lambda_0=\{(x',\nabla\psi_0(x'))\}\subset T^*M$ is a Lagrangian subspace and, since $\Phi^\pm(s,\cdot)$ is a symplectomorphism, $\Lambda_s=\Phi^\pm(s,\Lambda_0)$ is Lagrangian as well. If $\pi_{|\Lambda_s}$ is a local diffeomorphism, one can locally describe $\Lambda_s$ by  $\Lambda_s=\{(x',\nabla_{x'}\psi(s,x'))\}\subset T^*M$ for some function $\psi(s,\cdot)$, but blow-up happens when $\text{rank}(d\pi_{|\Lambda_s})<n$ (classical conjugate point theory), and such a $\psi(s,\cdot)$ may not exist.

\smallskip

However, if the phase $\psi_0$ is complex, quadratic, and satisfies the condition $\text{Im}(D^2\psi_0)>0$, where $D^2\psi_0$ denotes the Hessian, no blow-up happens, and the solution is global in time. Let us explain why. Indeed, $\Lambda_0=\{(x',\nabla\psi_0(x'))\}$ then lives in the complexification of the tangent space $T^*M$, which may be thought of as $\mathbb{C}^{2(n+1)}$. We take coordinates $(y,\eta)$ on $T^*\R^{n+1}$ or $T^*\mathbb{C}^{n+1}$ and we consider the symplectic forms defined by $\sigma=\sum dy_j\wedge d\eta_j$ and $\sigma_{\mathbb{C}}=\sum dy_j\wedge \overline{d{\eta_j}}$.

\smallskip

Because of the condition $\text{Im}(D^2\psi_0)>0$, $\Lambda_0$ is called a ``strictly positive Lagrangian space" (see \cite[Definition 21.5.5]{hormander2007analysis}), meaning that $i\sigma_{\mathbb{C}}(v,v)>0$ for $v$ in the tangent space to $\Lambda_0$. For any $s$, the symplectic forms $\sigma$ and $\sigma_{\mathbb{C}}$ are preserved by $\Phi(s,\cdot)$, meaning that $\Phi(s,\cdot)_*\sigma=\sigma$ and $\Phi(s,\cdot)_*\sigma_\C=\sigma_\C$, therefore $\sigma=0$ on the tangent space to $\Lambda_s$, and $i\sigma_{\mathbb{C}}(v,v)>0$ for $v$ tangent to $\Lambda_s$. It precisely means that $\Lambda_s$ is also a strictly positive Lagrangian space. Then, by \cite[Proposition 21.5.9]{hormander2007analysis}, we know that there exists $\psi(s,\cdot)$ complex and quadratic with $\text{Im}(D^2\psi(s,\cdot))>0$ such that $\Lambda_s=\{(x',\nabla_{x'}\psi(s,x'))\}$ (to apply  \cite[Proposition 21.5.9]{hormander2007analysis}, recall that for $\varphi(x')=\frac12(Ax',x')$, there holds $\nabla \varphi(x')=Ax'$). In other words, the key point in using complex phases is that strictly positive Lagrangian spaces are parametrized by complex quadratic phases $\varphi$ with $\text{Im}(D^2\varphi)>0$, whereas real Lagrangian spaces were not parametrized by real phases (see explanations above). This parametrization is a diffeomorphism from the Grassmannian of strictly positive Lagrangian spaces to the space of complex quadratic phases with $\varphi$ with $\text{Im}(D^2\varphi)>0$. Hence, the phase
\begin{equation*}
\psi(s,y')=\nabla_{x'}\psi(x(s))\cdot(y'-x'(s))+\frac12 (y'-x'(s))\cdot D_{x'}^2\psi(s,x'(s))(y'-x'(s))
\end{equation*}
for $s\in [0,T]$ and $y'\in\R^n$ is smooth and for this choice, \eqref{e:cancelfirstterm} is satisfied at second order along $s\mapsto x(s)$ (the rest $R(x',\xi')$ plays no role since it vanishes in a neighborhood of $s\mapsto x(s)$).

\smallskip

Then, we note that $A_2$ vanishes along the bicharacteristic if and only if $La_0(x(s))=0$ (see also \cite[Equation (24.2.9)]{hormander2007analysis}). According to \eqref{e:La01}, this turns out to be a linear transport equation on $a_0(x(s))$, with leading coefficient $\nabla_\xi p_2(x(s),\xi(s))$ different from $0$. Given $a\neq0$ at $(t=0,x'=x'(0))$, this transport equation has a solution $a_0(x(s))$ with initial datum $a$, and, by Cauchy uniqueness, $a_0(x(s))\neq 0$ for any $s$. We can choose $a_0$ in a smooth (and arbitrary) way outside the bicharacteristic. We choose it to vanish outside a small neighborhood of this bicharacteristic, so that no boundary effect happens.

\smallskip

With these choices of $\psi$ and $a_0$, the bound \eqref{e:boundedenergy} then follows from the following result whose proof is given in \cite[Lemma 2.8]{ralston1982gaussian}.
\begin{lemma}\label{l:oscint}
Let $c(x)$ be a function on $\R^{n+1}$ which vanishes at order $S-1$ on a curve $\Gamma$ for some $S\geq 1$. Suppose that $\text{Supp } c\cap \{|x_0|\leq T\}$ is compact and that $\text{Im }\psi(x)\geq ad(x)^2$ on this set for some constant $a>0$, where $d(x)$ denotes the distance from the point $x\in\R^{d+1}$ to the curve $\Gamma$. Then there exists a constant $C$ such that 
\begin{equation*}
\int_{|x_0|\leq T}\left| c(x)e^{ik\psi(x)}\right|^2dx\leq Ck^{-S-n/2}.
\end{equation*}
\end{lemma}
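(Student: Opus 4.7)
The plan is to combine the pointwise bound
$$|e^{ik\psi(x)}|^2 = e^{-2k\,\mathrm{Im}\,\psi(x)} \leq e^{-2ka\, d(x)^2},$$
valid on $\mathrm{Supp}\,c \cap \{|x_0|\leq T\}$, with a Taylor estimate for $c$ near $\Gamma$. Since $c$ vanishes to order $S-1$ on $\Gamma$, the Taylor remainder in directions normal to $\Gamma$ gives $|c(x)|\leq C_0\, d(x)^S$ in some tubular neighborhood $\mathcal{T}$ of $\Gamma\cap\{|x_0|\leq T\}$. Outside $\mathcal{T}$, $d(x)$ is bounded below by a positive constant, so the integrand is $O(e^{-\alpha k})$ uniformly on the compact support of $c$ and contributes $O(k^{-N})$ for every $N$.

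Within $\mathcal{T}$, I would cover the compact arc $\Gamma\cap\{|x_0|\leq T\}$ by finitely many charts in which $\Gamma$ is straightened to $\{y=0\}$ and the coordinates split as $(x_0,y)$ with $y=(y_1,\ldots,y_n)\in\R^n$. In these coordinates one has $d(x)\asymp |y|$ and the volume form $dx$ is comparable to $dx_0\,dy$, so the contribution of each chart to the integral is bounded by
$$C\int_{-T}^{T}\int_{|y|\leq \delta}|y|^{2S}\, e^{-2ka|y|^2}\, dy\, dx_0.$$
The rescaling $y = z/\sqrt{k}$, which produces $dy = k^{-n/2}\, dz$, converts this expression into
$$C\, k^{-S - n/2}\int_{-T}^{T}dx_0 \int_{\R^n}|z|^{2S}\, e^{-2a|z|^2}\, dz = C'\, k^{-S-n/2},$$
the $z$-integral being a convergent Gaussian moment. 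Summing over the finite cover and absorbing the exponentially small contribution from outside $\mathcal{T}$ yields the stated bound.

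The essentially scalar step is the rescaling in the normal direction, which trades $(S+n/2)$ powers of $1/\sqrt{k}$ for the vanishing of $c$ against the quadratic lower bound on $\mathrm{Im}\,\psi$. The only mildly technical points are the uniform Taylor bound and the construction of global tubular coordinates along the full compact arc; both are routine once one notes that $\Gamma\cap\{|x_0|\leq T\}$ is a smooth embedded curve, which is automatic here since $\Gamma$ is the base projection of an integral curve of the Hamiltonian field $\vec{p}_2$ on the non-critical level set $\{g^*=1/4\}$, so $\dot x(s)\neq 0$ throughout. I do not expect any serious obstacle: the result is a standard Gaussian-beam scaling lemma, and all the heavy lifting is done by the two inequalities $|c|\leq C_0 d^S$ and $\mathrm{Im}\,\psi \geq a d^2$.
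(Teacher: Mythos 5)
Your argument is correct and is precisely the standard tubular-neighborhood rescaling proof that the paper defers to Ralston \cite[Lemma 2.8]{ralston1982gaussian} rather than reproducing: bound $|c|\leq C_0 d^S$ by Taylor expansion, bound $|e^{ik\psi}|^2\leq e^{-2ka d^2}$, straighten $\Gamma$, substitute $y=z/\sqrt{k}$, and absorb the off-tube contribution as $O(k^{-\infty})$. The only point worth making explicit is that "vanishes at order $S-1$" is to be read as: all derivatives of order $\leq S-1$ vanish on $\Gamma$, which is exactly what gives $|c|\leq C_0 d^S$ and hence the exponent $-S-n/2$; you have interpreted it correctly.
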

Let us now sketch the end of the proof, which is given in Appendix \ref{a:approxgb} in full details. We apply Lemma \ref{l:oscint} to $S=3$, $c=A_1$ and to $S=1$, $c=A_2$, and we get
\begin{equation*}
\|\partial_{tt}^2v_k-\Delta v_k\|_{L^1(0,T;L^2(M))}\leq C(k^{-\frac12}+k^{-\frac12}+k^{-1}),
\end{equation*}
which implies \eqref{e:boundedenergy}.
The bounds \eqref{e:convenergyforv} and \eqref{e:decreaseenergyforv} follow from the facts that $\text{Im}(D^2\psi(s,\cdot))>0$ and $v_k(x)=k^{\frac{n}{4}-1}a_0(x)e^{ik\psi(x)}$.
\end{proof}

\begin{remark}
An interesting question would be to understand the delocalization properties of the Gaussian beams constructed along normal geodesics in Proposition \ref{p:approxgb}. Compared with the usual Riemannian case done for example in \cite{ralston1982gaussian}, there is a new phenomenon in the sub-Riemannian case since the normal geodesic $x(t)$  (or, more precisely, its lift to $S^*M$) may approach the characteristic manifold $\Sigma=\{g^*=0\}$ which is the set of directions in which $\Delta$ is not elliptic. In finite time $T$ as in our case, the lift of the normal geodesic remains far from $\Sigma$, but it may happen as $T\rightarrow +\infty$ that it goes closer and closer to $\Sigma$. The question is then to understand the link between the delocalization properties of the Gaussian beams constructed along such a normal geodesic, and notably the interplay between the time $T$ and the semi-classical parameter $1/k$.
\end{remark}

\subsection{Construction of sequences of exact solutions in $M$} \label{s:gbexact}

In this section, using the approximate solutions of Proposition \ref{s:gbapprox}, we construct \textit{exact} solutions of \eqref{e:sRwave} whose energy concentrates along a given normal geodesic of $M$ which does not meet the boundary $\partial M$ during the time interval $[0,T]$.

\begin{proposition} \label{p:exactgb}
Let $(x(t),\xi(t))_{t\in [0,T]}$ be a solution of \eqref{e:bicarac} in $M$ (in particular $g^*(x(0),\xi(0))=1/4$) which does not meet $\partial M$. Let $\theta\in C_c^\infty([0,T]\times M)$ with $\theta(t,\cdot)\equiv 1$ in a neighborhood of $x(t)$ and such that the support of $\theta(t,\cdot)$ stays at positive distance of $\partial M$. 

Suppose $(v_k)_{k\in\mathbb{N}}$ is constructed along $x(t)$ as in Proposition \ref{p:approxgb} and $u_k$ is the solution of the Cauchy problem
\begin{equation} 
\left\lbrace \begin{array}{l}
(\partial_{tt}^2-\Delta)u_k=0 \text{  \ in } (0,T)\times M, \nonumber \\
u_{k}=0 \text{ \ in } (0,T)\times \partial M, \nonumber \\
u_{k|t=0}=(\theta v_k)_{|t=0}, \  \partial_tu_{k|t=0}=[\partial_t(\theta v_{k})]_{|t=0}. \nonumber
\end{array}\right.
\end{equation}
Then:
\begin{itemize}
\item The energy of $u_k$ is bounded below with respect to $k$ and $t\in [0,T]$:
\begin{equation} \label{e:convenergyforu2}
\exists A>0, \forall t\in [0,T],\quad  \liminf_{\substack{k\rightarrow +\infty}}E(u_k(t,\cdot))\geq A.
\end{equation}
\item The energy of $u_k$ is small off $x(t)$: for any $t\in [0,T]$, we fix $V_t$ an open subset of $M$ for the initial topology of $M$, containing $x(t)$, so that the mapping $t\mapsto V_t$ is continuous ($V_t$ is chosen sufficiently small so that this makes sense in a chart). Then
\begin{equation} \label{e:decreaseenergy2}
\sup_{t\in [0,T]}\int_{M\backslash V_t} \left(|\partial_tu_k(t,x)|^2+\sum_{j=1}^m (X_ju_k(t,x))^2\right)d\mu(x)\underset{k\rightarrow +\infty}{\rightarrow} 0.
\end{equation}
\end{itemize}
\end{proposition}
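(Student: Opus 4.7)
The plan is to view $u_k$ as a perturbation of $\theta v_k$ and transfer the properties of $v_k$ given by Proposition \ref{p:approxgb}. Set $w_k := u_k - \theta v_k$. Since $\theta$ has support at positive distance from $\partial M$, $\theta v_k$ vanishes near $\partial M$, so $w_k$ satisfies the Dirichlet boundary condition; it has zero initial data by construction, and solves
\begin{equation*}
(\partial_{tt}^2 - \Delta) w_k = -\theta\,(\partial_{tt}^2 - \Delta) v_k \;-\; [\partial_{tt}^2 - \Delta,\, \theta]\, v_k.
\end{equation*}
The standard energy inequality for the inhomogeneous wave equation with Dirichlet boundary condition then gives
\begin{equation*}
\sqrt{E(w_k(t, \cdot))} \leq \frac{1}{\sqrt{2}}\, \|(\partial_{tt}^2 - \Delta)(\theta v_k)\|_{L^1((0, T); L^2(M))}, \qquad t \in [0, T].
\end{equation*}

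The first source term is $O(k^{-1/2})$ by \eqref{e:boundedenergy}. The commutator $[\partial_{tt}^2 - \Delta, \theta]$ is a first-order differential operator whose coefficients are derivatives of $\theta$, all supported in $\{\theta \neq 1\}$, which by compactness of $[0, T]$ is bounded away from the geodesic $\{(t, x(t)) : t \in [0, T]\}$ by a positive distance. Using the explicit form $v_k = k^{n/4-1} a_0 e^{ik\psi}$ together with $\text{Im}(\psi(t, \cdot)) \geq a\, d(\cdot, x(t))^2$ uniformly in $t$ (inherited from the construction in Proposition \ref{p:approxgb}), all of $v_k$, $\partial_t v_k$ and $X_j v_k$ are pointwise $O(k^{-N})$ on this region for any $N$. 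Hence $\sqrt{E(w_k(t, \cdot))} = O(k^{-1/2})$ uniformly in $t \in [0, T]$.

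To conclude, write $u_k = v_k + (\theta - 1) v_k + w_k$ and introduce the seminorm $\|u\|_E := \sqrt{2 E(u)}$. The same off-geodesic decay argument gives $\|(\theta - 1) v_k\|_E = O(k^{-N})$. Combined with $\|w_k\|_E = O(k^{-1/2})$, \eqref{e:convenergyforv} and the reverse triangle inequality, this yields $\|u_k(t, \cdot)\|_E \geq \sqrt{2A} - o(1)$ uniformly in $t$, proving \eqref{e:convenergyforu2}. For \eqref{e:decreaseenergy2}, one bounds
\begin{equation*}
\int_{M \setminus V_t}\!\!\left( |\partial_t u_k|^2 + \textstyle\sum_j (X_j u_k)^2 \right)\!d\mu \leq 2\!\!\int_{M \setminus V_t}\!\!\left( |\partial_t(\theta v_k)|^2 + \textstyle\sum_j (X_j(\theta v_k))^2 \right)\!d\mu + 4 E(w_k),
\end{equation*}
and expands the first integrand by the Leibniz rule: the contributions containing $\theta\, \partial_t v_k$ or $\theta\, X_j v_k$ are controlled by \eqref{e:decreaseenergyforv}, while the remaining terms contain derivatives of $\theta$ times $v_k$ and are supported off the geodesic, hence $O(k^{-N})$. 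Since $E(w_k) = O(k^{-1})$, the right-hand side tends to $0$ uniformly in $t \in [0, T]$.

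The only technical subtlety is guaranteeing the uniformity in $t \in [0, T]$ of the Gaussian decay of $v_k$ away from the geodesic; this follows from the compactness of $[0, T]$, the continuity of $t \mapsto x(t)$ and $t \mapsto V_t$, and the continuous dependence on $t$ of the quadratic form $D^2\psi(t, x(t))$ constructed in Proposition \ref{p:approxgb}.
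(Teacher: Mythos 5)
Your proof is correct and follows essentially the same route as the paper: decompose $u_k = \theta v_k + w_k$ (the paper writes $u_k = \theta v_k - w_k$, a sign convention only), bound $E(w_k)$ via the standard energy inequality for the inhomogeneous wave equation using \eqref{e:boundedenergy}, and transfer \eqref{e:convenergyforv}--\eqref{e:decreaseenergyforv} to $u_k$. You are somewhat more explicit than the paper about the commutator $[\partial_{tt}^2-\Delta,\theta]v_k$ being $O(k^{-\infty})$ via the Gaussian decay of $v_k$ off the geodesic, and about the uniformity in $t$, both of which the paper leaves implicit; this is a welcome but inessential refinement rather than a different argument.
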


\begin{proof}[Proof of Proposition \ref{p:exactgb}]
Set $h_k=(\partial_{tt}^2-\Delta)(\theta v_k)$. We consider $w_k$ the solution of the Cauchy problem
\begin{equation} \label{e:cauchyu}
\left\lbrace \begin{array}{l}
(\partial_{tt}^2-\Delta)w_k=h_k \text{\ in } (0,T)\times M, \\
w_k=0  \text{ \ in } (0,T)\times \partial M,\\
(w_{k|t=0},\partial_tw_{k|t=0})=(0,0)\,.
\end{array}\right.
\end{equation}
Differentiating $E(w_k(t,\cdot))$ and using Gronwall's lemma, we get the energy inequality
\begin{equation*}
\sup_{t\in [0,T]} E(w_k(t,\cdot))\leq C\left(E(w_k(0,\cdot))+\|h_k\|_{L^1(0,T;L^2(M))}\right).
\end{equation*}
Therefore, using \eqref{e:boundedenergy}, we get $\sup_{t\in [0,T]} E(w_k(t,\cdot))\leq Ck^{-1}$. Since $u_k=\theta v_k-w_k$, we obtain that  
\begin{equation*}
\underset{k\rightarrow +\infty}{\lim}E(u_k(t,\cdot))=\underset{k\rightarrow +\infty}{\lim}E((\theta v_k)(t,\cdot))=\underset{k\rightarrow +\infty}{\lim}E(v_k(t,\cdot))
\end{equation*}
for every $t\in[0,T]$ where the last equality comes from the fact that $\theta$ and its derivatives are bounded and $\|v_k\|_{L^2}\leq Ck^{-1}$ when $k\rightarrow +\infty$. Using \eqref{e:convenergyforv}, we conclude that \eqref{e:convenergyforu2} holds. 

\smallskip

To prove \eqref{e:decreaseenergy2}, we observe similarly that 
\begin{align}
&\sup_{t\in [0,T]}\int_{M\backslash V_t} \left(|\partial_tu_k(t,x)|^2+\sum_{j=1}^m (X_ju_k(t,x))^2\right)d\mu(x) \nonumber \\
\leq \  & C\sup_{t\in [0,T]}\left(\int_{M\backslash V_t} \left(|\partial_tv_k(t,x)|^2+\sum_{j=1}^m (X_jv_k(t,x))^2\right)d\mu(x)\right)+Ck^{-\frac12} \nonumber \\
\longrightarrow & \  0  \nonumber
\end{align}
as $k\rightarrow +\infty$, according to \eqref{e:decreaseenergyforv}. It concludes the proof of Proposition \ref{p:exactgb}.
\end{proof}

\section{Existence of spiraling normal geodesics} \label{s:spiralingmain}
The goal of this section is to prove the following proposition, which is the second building block of the proof of Theorem \ref{t:main}, after the construction of localized solutions of the subelliptic wave equation \eqref{e:system} done in Section \ref{s:constrgbsmain}. 

\smallskip

We say that $X_1,\ldots,X_m$ satisfies the property $\mathbf{(P)}$ at $q\in M$ if the following holds:

\smallskip

$\mathbf{(P)}$ \ \emph{For any open neighborhood $V$ of $q$, for any $T_0>0$, there exists a non-stationary normal geodesic $t\mapsto x(t)$, traveled at speed $1$, such that $x(t)\in V$ for any $t\in [0,T_0]$.}

\begin{proposition} \label{p:existencegeod}
At any point $q\in M$ such that there exist $1\leq i,j\leq m$ with $[X_i,X_j](q)\notin \mathcal{D}_q$, property $\mathbf{(P)}$ holds.
\end{proposition}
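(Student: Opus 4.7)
My plan follows the two-step scheme sketched in the introduction: I would first reduce the question to the corresponding property for the nilpotent approximation of $X_1,\ldots,X_m$ at $q$, then exhibit spiraling normal geodesics explicitly in the nilpotent model, and finally transfer these geodesics to the original vector fields by a perturbation argument. Concretely, I start by choosing privileged coordinates centered at $q$ and denote by $\hat X_1,\ldots,\hat X_m$ the nilpotent approximation of the $X_i$ at $q$, viewed as polynomial vector fields on $\mathbb R^n$ that are homogeneous of weight $1$ with respect to the anisotropic dilations $\delta_\lambda$ associated with those coordinates. The hypothesis $[X_i,X_j](q)\notin\mathcal D_q$ passes to the nilpotentization, so the nilpotent approximation still has a genuine step-$2$ direction at the origin, namely $[\hat X_i,\hat X_j](0)\notin{\rm Span}(\hat X_1(0),\ldots,\hat X_m(0))$.

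The key feature of the nilpotent model is its scaling symmetry. The principal symbol $\hat g^\ast=\sum_{i=1}^m h_{\hat X_i}^2$ is homogeneous with respect to $\delta_\lambda$ combined with its natural contragredient action on covectors, so if $t\mapsto(\hat x(t),\hat\xi(t))$ is a normal bicharacteristic of $\hat g^\ast$, then rescaling $\hat x$ by $\delta_\lambda$ and $\hat\xi$ by the dual dilation, together with an appropriate time rescaling, yields another normal bicharacteristic. Therefore a single non-stationary normal geodesic of $\hat g^\ast$ that remains in a bounded set on some fixed time interval can, by taking $\lambda$ small enough, be rescaled into geodesics staying in any prescribed neighborhood of $0$ for any prescribed time $T_0$. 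The role of the assumption at $q$ is to let me produce such a bounded spiraling geodesic inside the $3$-dimensional Heisenberg-type sub-model generated by $\hat X_i$, $\hat X_j$ and $[\hat X_i,\hat X_j]$: by choosing initial covector coordinates that annihilate the remaining directions (which is consistent with the conservation laws of the nilpotent flow), the Hamiltonian system reduces at leading order to the Heisenberg Hamiltonian of \eqref{e:bicharH}, for which the explicit spiraling solution \eqref{e:geodspiraling} is already written down.

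The last and technically most delicate step is the transfer from the nilpotent model back to the true vector fields. In privileged coordinates one has $g^\ast=\hat g^\ast+r$, where $r$ carries strictly positive homogeneous weight, so rescaling by $\delta_\lambda$ with $\lambda\to 0$ turns the full Hamiltonian system into a small perturbation of the nilpotent one on any fixed time interval. I would write the Hamilton equations in the rescaled variables and apply Gr\"onwall's inequality on the time interval where the nilpotent spiraling geodesic lives; this should give uniform convergence of the perturbed trajectory towards the nilpotent spiraling geodesic as $\lambda\to 0$. Undoing the rescaling then provides, for $\lambda$ small enough, a genuine normal bicharacteristic of $X_1,\ldots,X_m$ projecting to a curve that stays inside the prescribed neighborhood $V$ of $q$ throughout $[0,T_0]$; since the nilpotent model geodesic was chosen inside the elliptic region $\{\hat g^\ast>0\}$, the perturbed trajectory avoids the characteristic cone, so reparametrising at unit speed is legitimate and yields the normal geodesic required by $\mathbf{(P)}$.

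The main obstacle is the quantitative control of this transfer: the Gr\"onwall estimate must be organized so that the small factor produced by the rescaling beats the exponential amplification on the relevant time interval, and one must simultaneously ensure that the perturbed trajectory does not drift into $\{g^\ast=0\}$ where the notion of normal geodesic and its speed-one parametrisation degenerate. Once this perturbation result is in place, combining it with the explicit Heisenberg spiraling model and the scaling reduction yields $\mathbf{(P)}$ at $q$.
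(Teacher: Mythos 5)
Your overall architecture (reduce to the nilpotent approximation, exploit its anisotropic scaling to compress a single bounded normal geodesic, then transfer back by a Gr\"onwall perturbation argument) is consistent with the paper's two-step strategy, and the reduction and transfer steps are sound: the paper proves exactly such a comparison lemma (Lemma \ref{l:reduction}), fixing $T_0$ before the neighborhoods so that the exponential Gr\"onwall factor causes no trouble. The scaling symmetry of the nilpotent model is also real and is, in effect, what the paper uses when it scales the level-two momenta by $\varepsilon^{-1}$.

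The genuine gap is in your production of a bounded spiraling geodesic in the nilpotent model. You propose to ``annihilate the remaining directions'' in the initial covector and claim this is ``consistent with the conservation laws of the nilpotent flow,'' so that the dynamics collapses onto a $3$-dimensional Heisenberg-type system spanned by $\hat X_i$, $\hat X_j$, $[\hat X_i,\hat X_j]$. This is not justified, and in general it is false. The momenta that are actually conserved along the nilpotent flow in exponential coordinates of the second kind are the ones of weight $\geq 2$ (this is the paper's Claim~1); the \emph{horizontal} momenta $\xi_1,\dots,\xi_{n_1}$, equivalently the controls $u_\ell(t)=2h_{X_\ell}$, are not conserved but evolve by $\dot u=Gu$ where $G$ is the Goh matrix. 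Setting $u_\ell(0)=0$ for $\ell\notin\{i,j\}$ does not persist unless the $(i,j)$ plane happens to be an invariant plane of $G$, and nothing forces it to be: even if only the level-two covector dual to $[\hat X_i,\hat X_j]$ is nonzero, other brackets $[\hat X_k,\hat X_\ell]$ may have a nonzero component on that same direction, producing nonzero entries $G_{k\ell}$ that couple the $\{i,j\}$ plane to the rest. The paper circumvents this precisely by first passing to a reduced Hamiltonian with a \emph{constant} Goh matrix (Claims~1 and~2) and then choosing $u_0$ in an invariant plane of $G$ (so its projection on $\ker G$ vanishes). The latter choice is not a technical convenience: as explained in Remark~\ref{r:singular}, a $u_0$ with nonzero component in $\ker G$ yields a trajectory that spirals around a \emph{singular} curve and does \emph{not} remain near $q$ even though the level-two momentum is large. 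Your sub-model picture, as written, does not see this obstruction, and a generic choice of ``Heisenberg-like'' initial data could land precisely on such a non-compressible trajectory. A second, smaller omission is that even after controlling the horizontal and level-two coordinates, one must bound the drift in the coordinates of weight $\geq 3$; this is the content of the paper's Claim~2, which uses that the corresponding coefficients in the Hamilton equations are nonconstant homogeneous polynomials in the lower-weight coordinates. The scaling argument you invoke compresses these directions automatically \emph{once} you have a bounded geodesic, but you still need the Goh matrix structure to produce one in the first place.
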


In Section \ref{s:nilpotentization}, we define the so-called nilpotent approximations $\widehat{X}^q_1,\ldots,\widehat{X}^q_m$ at a point $q\in M$, which are first-order approximations of $X_1,\ldots,X_m$ at $q\in M$ such that the associated Lie algebra $\text{Lie}(\widehat{X}^q_1,\ldots,\widehat{X}^q_m)$ is nilpotent. Roughly, we have $\widehat{X}_i^q\approx X_i(q)$, but low order terms of $X_i(q)$ are not taken into account for defining $\widehat{X}_i^q$, so that the high order brackets of the $\widehat{X}_i^q$ vanish (which is not generally the case for the $X_i$). These nilpotent approximations are good local approximations of the vector fields $X_1,\ldots,X_m$, and their study is much simpler.

\smallskip

The proof of Proposition \ref{p:existencegeod} splits into two steps: first, we show that it is sufficient to prove the result in the nilpotent case (Section \ref{s:spiralinggeneral}), then we handle this simpler case (Section \ref{s:spiralingnilpotent}).

\subsection{Nilpotent approximation} \label{s:nilpotentization}
In this section, we recall the construction of the nilpotent approximations $\widehat{X}^q_1,\ldots,\widehat{X}^q_m$. The definitions we give are classical, and the reader can refer to \cite[Chapter 10]{agrachev2019} and \cite[Chapter 2]{jean2014control} for more material on this section. This construction is related to the notion of tangent space in the Gromov-Hausdorff sense of a sub-Riemannian structure $(M,\mathcal{D},g)$ at a point $q\in M$; the tangent space is defined intrinsically (meaning that it does not depend on a choice of coordinates or of local frame) as an equivalence class under the action of sub-Riemannian isometries (see \cite{bellaiche1996tangent}, \cite{jean2014control}).

\smallskip

\textbf{Sub-Riemannian flag.} We define the sub-Riemannian flag as follows: we set $\mathcal{D}^0=\{ 0\}$, $\mathcal{D}^1=\mathcal{D}$, and, for any $j\geq 1$, $\mathcal{D}^{j+1}=\mathcal{D}^j+[\mathcal{D},\mathcal{D}^j]$. For any point $q\in M$, it defines a flag
\begin{equation*}
\{0\}=\mathcal{D}_q^0\subset \mathcal{D}_q^1\subset \ldots \subset \mathcal{D}_q^{r-1} \varsubsetneq \mathcal{D}_q^{r(q)} =T_qM.
\end{equation*}
The integer $r(q)$ is called the non-holonomic order of $\mathcal{D}$ at $q$, and it is equal to $2$ everywhere in the Heisenberg manifold for example. Note that it depends on $q$, see Example \ref{ex:exempleBaouendi} in Section \ref{s:mainresult} (the Baouendi-Grushin example).

\smallskip
For $0\leq i \leq r(q)$, we set $n_i(q)=\dim \mathcal{D}_q^i$, and the sequence $(n_i(q))_{0\leq i\leq r(q)}$ is called the growth vector at point $q$. We set $\mathcal{Q}(q)=\sum_{i=1}^{r(q)}i(n_i(q)-n_{i-1}(q))$, which is generically the Hausdorff dimension of the metric space given by the sub-Riemannian distance on $M$ (see \cite{mitchell85}). Finally, we define the non-decreasing sequence of weights $w_i(q)$ for $1\leq i\leq n$ as follows. Given any $1\leq i\leq n$, there exists a unique $1\leq j\leq n$ such that $n_{j-1}(q)+1\leq i\leq n_j(q)$. We set $w_i(q)=j$. For example, for any $q$ in the Heisenberg manifold, $w_1(q)=w_2(q)=1$ and $w_3(q)=2$: indeed, the coordinates $x_1$ and $x_2$ have ``weight $1$'', while the coordinate $x_3$ has ``weight $2$'' since $\partial_{x_3}$ requires a bracket to be generated.

\smallskip

\textbf{Regular and singular points.} We say that $q\in M$ is regular if the growth vector $(n_i(q'))_{0\leq i\leq r(q')}$ at $q'$ is constant for $q'$ in a neighborhood of $q$. Otherwise, $q$ is said to be singular. If any point $q\in M$ is regular, we say that the structure is equiregular. For example, the Heisenberg manifold is equiregular, but not the Baouendi-Grushin example.

\smallskip

\textbf{Non-holonomic orders.} The non-holonomic order of a smooth germ of function is given by the formula
\begin{equation*}
\ord_q(f)=\min\{ s\in\mathbb{N} : \exists i_1,\ldots,i_s\in \{1,\ldots,m\} \text{ such that } (X_{i_1}\ldots X_{i_s}f)(q)\neq 0\}
\end{equation*}
where we adopt the convention that $\min\emptyset = +\infty$.

\smallskip

 The non-holonomic order of  a smooth germ of vector field $X$ at $q$, denoted by $\ord_q(X)$, is the real number defined by
\begin{equation*}
\ord_q(X)=\sup\{\sigma\in\R : \ord_q(Xf)\geq \sigma+\ord_q(f), \ \ \forall f \in C^\infty(q)\}.
\end{equation*} 
For example, there holds $\ord_q([X,Y])\geq \ord_q(X)+\ord_q(Y)$ and $\ord_{q}(fX)\geq \ord_q(f)+\ord_q(X)$. As a consequence, every $X$ which has the property that $X(q')\in\mathcal{D}^i_{q'}$ for any $q'$ in a neighborhood of $q$ is of non-holonomic order $\geq -i$.

\smallskip

\textbf{Privileged coordinates.} Locally around $q\in M$, it is possible to define a set of so-called ``privileged coordinates" of $M$ (see \cite{bellaiche1996tangent}).

\smallskip

A family $(Z_1,\ldots,Z_n)$ of $n$ vector fields is said to be adapted to the sub-Riemannian flag at $q$ if it is a frame of $T_qM$ at $q$ and if $Z_i(q)\in \mathcal{D}_q^{w_i(q)}$ for any $i\in \{1,\ldots,n\}$. In other words, for any $i\in\{1,\ldots,r(q)\}$, the vectors $Z_1, \ldots, Z_{n_i(q)}$ at $q$ span $\mathcal{D}_q^i$.

\smallskip

A system of privileged coordinates at $q$ is a system of local coordinates $(x_1,\ldots,x_n)$ such that 
\begin{equation}\label{e:nonholpriv}
\ord_q(x_i)=w_i, \qquad \text{ for $1\leq i\leq n$.}
\end{equation}
 In particular, for privileged coordinates, we have $\partial_{x_i}\in \mathcal{D}_q^{w_i(q)}\backslash \mathcal{D}_q^{w_i(q)-1}$ at $q$, meaning that privileged coordinates are adapted to the flag.

\smallskip

\textbf{Example: exponential coordinates of the second kind.} Choose an adapted frame $(Z_1,\ldots,Z_n)$ at $q$. It is proved in \cite[Appendix B]{jean2014control} that the inverse of the local diffeomorphism
\begin{equation*}
(x_1,\ldots,x_n)\mapsto \exp(x_1Z_1)\circ \cdots \circ \exp(x_n Z_n)(q)
\end{equation*}
defines privileged coordinates at $q$, called exponential coordinates of the second kind.

\smallskip

\textbf{Dilations.} We consider a chart of privileged coordinates at $q$ given by a smooth mapping $\psi_q : U\rightarrow \R^n$, where $U$ is a neighborhood of $q$ in $M$, with $\psi_q(q)=0$. For every $\varepsilon \in \mathbb{R}\backslash \{0\}$, we consider the dilation $\delta_\e:\R^n\rightarrow \R^n$ defined by
\begin{equation*}
\delta_\varepsilon(x)=(\varepsilon^{w_i(q)}x_1,\ldots,\varepsilon^{w_n(q)}x_n)
\end{equation*}
for every $x=(x_1,\ldots,x_n)$. A dilation $\delta_\e$ acts also on functions and vector fields on $\R^n$ by pull-back: $\delta_\e^*f=f\circ \delta_\e$ and $\delta_\e^*X$ is the vector field such that $(\delta_\e^*X)(\delta_\e^*f)=\delta_\e^*(Xf)$ for any $f\in C^1(\R^n)$. In particular, for any vector field $X$ of non-holonomic order $k$, there holds $\delta_\e^*X=\e^{-k}X$.

\smallskip

\textbf{Nilpotent approximation.} Fix a system of privileged coordinates $(x_1,\ldots,x_n)$ at $q$. Given a sequence of integers $\alpha=(\alpha_1,\ldots,\alpha_n)$, we define the weighted degree of $x^\alpha=x_1^{\alpha_1}\ldots x_n^{\alpha_n}$ to be $w(\alpha)=w_1(q)\alpha_1+\ldots+w_n(q)\alpha_n$. Coming back to the vector fields $X_1,\ldots,X_m$, we can write the Taylor expansion
\begin{equation}\label{e:Taylorexpansionvf}
X_i(x)\sim \sum_{\alpha,j}a_{\alpha,j}x^\alpha \partial_{x_j}.
\end{equation}
Since $X_i\in\mathcal{D}$, its non-holonomic order is necessarily $-1$, hence there holds $w(\alpha)\geq w_j(q)-1$ if $a_{\alpha,j}\neq 0$. Therefore, we may write $X_i$ as a formal series
\begin{equation*}
X_i=X_i^{(-1)}+X_i^{(0)}+X_i^{(1)}+\ldots
\end{equation*}
where $X_i^{(s)}$ is a homogeneous vector field of degree $s$, meaning that
\begin{equation*}
\delta_\varepsilon^*(\psi_q)_*X^{(s)}_i=\varepsilon^{s}(\psi_q)_*X^{(s)}_i.
\end{equation*}
We set $\widehat{X}^q_i=(\psi_q)_*X_i^{(-1)}$ for $1\leq i\leq m$. Then $\widehat{X}^q_i$ is homogeneous of degree $-1$ with respect to dilations, i.e., $\delta_\varepsilon^* \widehat{X}^q_i=\varepsilon^{-1}\widehat{X}^q_i$ for any $\varepsilon\neq 0$. Each $\widehat{X}^q_i$ may be seen as a vector field on $\R^n$ thanks to the coordinates $(x_1,\ldots,x_n)$. Moreover,
\begin{equation*}
\widehat{X}_i^q=\lim_{\e\rightarrow 0} \e\delta_{\e}^*(\psi_q)_*X_i
\end{equation*}
in $C^\infty$ topology: all derivatives uniformly converge on compact subsets. For $\e>0$ small enough we have
\begin{equation*}
X_i^\e:= \e\delta_{\e}^*(\psi_q)_*X_i=\widehat{X}_i^q+\e R_i^\e
\end{equation*}
where $R_i^\e$ depends smoothly on $\e$ for the $C^\infty$ topology (see also \cite[Lemma 10.58]{agrachev2019}). An important property is that $(\widehat{X}^q_1,\ldots,\widehat{X}^q_m)$ generates a nilpotent Lie algebra of step $r(q)$ (see \cite[Proposition 2.3]{jean2014control}). 

\smallskip

 The nilpotent approximation of $X_1,\ldots,X_m$ at $q$ is then defined as $\widehat{M}^q\simeq\R^n$ endowed with the vector fields $\widehat{X}_1^q,\ldots,\widehat{X}^q_m$. It is important to note that the nilpotent approximation depends on the initial choice of privileged coordinates. For an explicit example of computation of nilpotent approximation, see \cite[Example 2.8]{jean2014control}.

\subsection{Reduction to the nilpotent case} \label{s:spiralinggeneral}
In this section, we show the following 
\begin{lemma}\label{l:reduction}
Let $X_1,\ldots,X_m$ be smooth vector fields on $M$ satisfying  H\"ormander's condition, and let $q\in M$. If the property $\mathbf{(P)}$ holds at point $0\in \R^n$ for the nilpotent approximation $\widehat{X}^q_1,\ldots,\widehat{X}^q_m$, then the property $\mathbf{(P)}$ holds at point $q$ for $X_1,\ldots,X_m$.
\end{lemma}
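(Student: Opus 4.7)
\medskip

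The plan is to transfer the nilpotent geodesic to a geodesic of $X_1,\ldots,X_m$ via the scaling $\delta_\e$ together with a perturbative argument. I work throughout in privileged coordinates $\psi_q$, identifying $q$ with $0\in\R^n$, and I write $X_i$ for $(\psi_q)_*X_i$ when no confusion arises. Two facts from Section \ref{s:nilpotentization} are central. First, the rescaled vector fields $X_i^\e = \e\delta_\e^*(\psi_q)_*X_i$ satisfy $X_i^\e = \widehat{X}_i^q + \e R_i^\e$ in $C^\infty$ on each compact set, so the Hamiltonians $H^\e := \sum_i h_{X_i^\e}^2$ converge in $C^\infty$ on compact sets to $\widehat{H} := \sum_i h_{\widehat X_i^q}^2$. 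Second, the homogeneity of $\widehat H$ gives a scaling dictionary: a unit-speed bicharacteristic of $H=g^*$ lying in a $\delta_\e$-small neighborhood $\delta_\e(K)$ of $q$ on a time interval $[0,T_0]$ corresponds, via the cotangent lift $\tilde\delta_\e$ and the energy/time reparametrization forced by the degree-$(-1)$ homogeneity of $\widehat X_i^q$, to a unit-speed bicharacteristic of $H^\e$ on the longer interval $[0,T_0/\e]$ lying in the fixed compact $K$.

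Given $V$ and $T_0$, I would fix a compact $K = \overline{B(0,2)}$ in the privileged coordinates and choose $\e>0$ small enough that $\delta_\e(K)\subset\psi_q(V)$. Applying property $\mathbf{(P)}$ for $\widehat X^q$ at $0$ with neighborhood $B(0,1)\subset K$ and time horizon $T_0/\e$ yields a non-stationary unit-speed nilpotent normal geodesic $\hat\gamma_\e$ on $[0,T_0/\e]$ contained in $B(0,1)$. The heart of the argument is then to produce, for all $\e$ small enough, a unit-speed bicharacteristic $(y^\e,\eta^\e)$ of $H^\e$ defined on $[0,T_0/\e]$ whose base projection $y^\e$ stays in $K$. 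Once this is achieved, inverting the scaling dictionary via $\tilde\delta_\e^{-1}$ produces a unit-speed normal geodesic of $(\psi_q)_*X_1,\ldots,(\psi_q)_*X_m$ on $[0,T_0]$ lying in $\delta_\e(K)\subset\psi_q(V)$, and pulling this geodesic back by $\psi_q^{-1}$ delivers the non-stationary unit-speed normal geodesic of $X_1,\ldots,X_m$ required by $\mathbf{(P)}$ at $q$.

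The main obstacle is the perturbative step producing $(y^\e,\eta^\e)$. A direct Gronwall estimate comparing bicharacteristics of $H^\e$ and $\widehat H$ starting from the same initial datum yields a deviation bound of the form $C\e\cdot e^{LT_0/\e}$, which blows up as $\e\to 0$ and is therefore useless on the time scale $[0,T_0/\e]$. To overcome this, I would exploit the specific quasi-periodic structure of the spiraling nilpotent geodesics constructed in Section \ref{s:spiralingnilpotent}: the components of $\hat\gamma_\e$ along directions of weight one execute a bounded, essentially periodic motion, while the drift in the higher-weight (bracket-generated) directions is controlled at leading order and is itself preserved as a slow quantity by the nilpotent flow. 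A shooting/implicit-function argument on a suitable Poincar\'e return map for the nearly integrable Hamiltonian $H^\e = \widehat H + O(\e)$, or equivalently a persistence argument for the invariant set carrying $\hat\gamma_\e$, then provides a bicharacteristic of $H^\e$ that shadows $\hat\gamma_\e$ throughout $[0,T_0/\e]$ while remaining in $K$. Establishing this long-time stability under the $O(\e)$ perturbation is the delicate point of the reduction.
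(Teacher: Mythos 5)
Your proposal diverges from the paper's argument at the very first step, and the divergence creates a gap you yourself flag as unresolved. You rescale to the fixed compact $K$ and then try to compare bicharacteristics of $H^\e$ and $\widehat H$ over the stretched time interval $[0,T_0/\e]$; since direct Gronwall gives $C\e\,e^{LT_0/\e}$, you are forced into a long-time shadowing/persistence argument for a nearly integrable Hamiltonian. That step is not actually carried out — you describe it as a Poincar\'e return-map or invariant-set persistence argument and call it ``the delicate point,'' which is precisely a gap. Moreover, such a persistence statement is far from automatic: in the general nilpotent case the flow of $\widehat H$ need not carry invariant tori satisfying KAM-type nondegeneracy, and nothing in Section \ref{s:spiralingnilpotent} is designed to deliver the structure you would need. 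So as written the proof is incomplete, and the missing step is the hard part.

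The paper avoids the long-time horizon entirely, and the key observation is the one you did not use. Instead of keeping the rescaled fields $X_i^\e=\widehat X_i^q+\e R_i^\e$, one unscales the identity: by homogeneity of $\widehat X_i^q$,
$$
Y_i=\frac{1}{\e}(\delta_\e)_*X_i^\e=\widehat X_i^q+(\delta_\e)_*R_i^\e,
$$
and the remainder $(\delta_\e)_*R_i^\e$ is a fixed vector field (independent of $\e$) whose horizontal projection vanishes at the origin, i.e.\ the Hamiltonian vector field $\vec H$ equals $\vec{\widehat H}+\vec\Theta$ with $\|(d\pi\circ\vec\Theta)(x,\xi)\|\le C\|x\|$. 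This turns the comparison into a Gronwall estimate on the \emph{original, fixed} time interval $[0,T_0]$, where the perturbation is controlled not by $\e$ but by the distance to the origin. Since property $\mathbf{(P)}$ for the nilpotent model already supplies a normal geodesic confined to an arbitrarily small ball around $0$ on $[0,T_0]$, the perturbation is small along it, and the normal geodesic of $Y_1,\ldots,Y_m$ with the same initial covector remains in a slightly larger ball on $[0,T_0]$. Pulling back by $\psi_q^{-1}$ finishes. In short: the smallness you need comes from the remainder vanishing at $q$, not from the $\e$ in the rescaled dilation, and once you see this the time horizon never has to be stretched.
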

Note that the above lemma is true for any nilpotent approximation $\widehat{X}^q_1,\ldots,\widehat{X}^q_m$ at $q$, i.e., for any choice of privileged coordinates  (see Section \ref{s:nilpotentization}).

\begin{proof}[Proof of Lemma \ref{l:reduction}] We use the notation $h_Z$ for the momentum map associated with the vector field $Z$ (see Section \ref{s:normal}).
We use the notations of Section \ref{s:nilpotentization}, in particular the coordinate chart $\psi_q$. 

\smallskip

 We set  $Y_i=(\psi_q)_*X_i$ and $X_i^\e=\e\delta_\e^*Y_i$ which is a vector field on $\R^n$. Recall that
\begin{equation*}
X_i^\e=\widehat{X}_i^q+\e R_i^\e
\end{equation*}
where $R_i^\e$ depends smoothly on $\e$ for the $C^\infty$ topology. Therefore, using the homogeneity of $\widehat{X}_i^q$, we get, for any $\varepsilon >0$,
\begin{equation} \label{e:diffYi}
Y_i=\frac{1}{\e}(\delta_\e)_*X_i^\e=\frac{1}{\e}(\delta_\e)_*(\widehat{X}_i^q+\e R_i^\e)=\widehat{X}_i^q+(\delta_\e)_*R_i^\e.
\end{equation}


The vector field $(\delta_\e)_*R_i^\e(x)$ does not depend on $\varepsilon$ and has a size which tends uniformly to $0$ as $x\rightarrow0\in \widehat{M}^q\simeq\R^n$.
 Recall that the Hamiltonian $\widehat{H}$ associated to the vector fields $\widehat{X}_i^q$ is given by
\begin{equation*}
\widehat{H}=\sum_{i=1}^mh_{\widehat{X}^q_i}^2.
\end{equation*}
Similarly, we set
\begin{equation*}
H=\sum_{i=1}^m h_{Y_i}^2.
\end{equation*}
 We note that \eqref{e:diffYi} gives
\begin{equation*}
h_{Y_i}=h_{\widehat{X}_i^q}+h_{(\delta_\e)_*R_i^\e}.
\end{equation*}
Hence 
\begin{equation} \label{e:perturbHamiltnilpo}
\vec{H}=2\sum_{i=1}^mh_{Y_i}\vec{h}_{Y_i}=\vec{\widehat{H}}+\vec{\Theta},
\end{equation}
where $\vec{\Theta}$ is a smooth vector field on $T^*\R^n$ such that 
\begin{equation} \label{e:petitTheta}
\|(d\pi\circ \vec{\Theta})(x,\xi)\|\leq C\|x\|
\end{equation}
 when $\|x\|\rightarrow 0$ (independently of $\xi$) where $\pi:T^*\R^n\rightarrow \R^n$ is the canonical projection. This last point comes from the smooth dependence of $R_i^\e$ on $\e$ for the $C^\infty$ topology (uniform convergence of all derivatives on compact subsets of $\R^n$). 
 
 \smallskip

Given the projection of an integral curve $c(\cdot)$  of $\vec{H}$, we denote by $\widehat{c}(\cdot)$ the projection of the integral curve of $\vec{\widehat{H}}$ with same initial covector. Combining \eqref{e:perturbHamiltnilpo} and \eqref{e:petitTheta}, and using Gronwall's lemma, we obtain the following result: 
\begin{center}
Fix $T_0>0$. For any neighborhood $V$ of $0$ in $\R^n$, there exists another neighborhood $V'$ of $0$ such that if $c_{|[0,T_0]}\subset V'$, then $\widehat{c}_{|[0,T_0]}\subset V$.
\end{center}

Therefore, if the property $\mathbf{(P)}$ holds at $0\in\R^n$ for $\widehat{X}_1^q,\ldots,\widehat{X}^q_m$, then it holds also at $0\in\R^n$ for the vector fields $Y_1,\ldots,Y_m$.

\smallskip

Using that $X_i=\psi_q^*Y_i$, we can pull back the result to $M$ and obtain that the property $\mathbf{(P)}$ holds at point $q$ for $X_1,\ldots,X_m$, which concludes the proof of Proposition \ref{p:existencegeod}.
\end{proof}

Thanks to Lemma \ref{l:reduction}, it is sufficient to prove the property $\mathbf{(P)}$ under the additional assumption that
\begin{equation} \label{e:assump}
\text{$M\subset \R^n$ and $\text{Lie}(X_1,\ldots,X_m)$ is nilpotent.}
\end{equation}
In all the sequel, we assume that this is the case.

\subsection{End of the proof of Proposition \ref{p:existencegeod}} \label{s:spiralingnilpotent}
Let us finish the proof of Proposition \ref{p:existencegeod}. Our ideas are inspired by \cite[Section 6]{agrachev2001}. 

\paragraph{First step: reduction to the constant Goh matrix case.} We consider an adapted frame $Y_1,\ldots,Y_n$ at $q$. We take exponential coordinates of the second kind at $q$: we consider the inverse $\psi_q$ of the diffeomorphism
$$
(x_1,\ldots,x_n)\mapsto \exp(x_1Y_1)\ldots\exp(x_nY_{n})(q).
$$
Then we write the Taylor expansion \eqref{e:Taylorexpansionvf} of $X_1,\ldots,X_m$ in these coordinates. Thanks to Lemma \ref{l:reduction}, we can assume that all terms in these Taylor expansions have non-holonomic order $-1$. We denote by $\xi_i$ the dual variable of $x_i$. We use the notations $n_1, n_2, \ldots$ introduced in Section \ref{s:nilpotentization}, and we make a strong use of \eqref{e:nonholpriv}.

\smallskip

\emph{Claim 1.} If a normal geodesic $(x(t),\xi(t))_{t\in\R}$ has initial momentum satisfying $\xi_k(0)=0$ for any $k\geq n_2+1$, then $\dot{\xi}_k\equiv0$ for any $k\geq n_1+1$, and in particular $\xi_k\equiv 0$ for any $k\geq n_2+1$.
\begin{proof} 
We write 
$$
X_j(x)=\sum_{i=1}^n a_{ij}(x)\partial_{x_i}, \qquad j=1,\ldots,m
$$
where the $a_{ij}$ are homogeneous polynomials. We have
\begin{equation}\label{e:g*redgoh}
g^*(x,\xi)=\sum_{j=1}^m\left(\sum_{i=1}^n a_{ij}(x)\xi_i\right)^2.
\end{equation}
Let $k\geq n_2+1$, which means that $x_k$ has non-holonomic order $\geq 3$. If $a_{ij}(x)$ depends on $x_k$, then necessarily $i\geq n_3+1$, since $a_{ij}(x)\partial_{x_i}$ has non-holonomic order $-1$. Thus, writing explicitly $\dot{\xi}_k=-\frac{\partial g^*}{\partial x_k}$ thanks to \eqref{e:g*redgoh}, there is in front of each term a factor $\xi_i$  for some $i$ which is in particular $\geq n_2+1$. By Cauchy uniqueness, we deduce that $\xi_k\equiv0$ for any $k\geq n_2+1$. 

\smallskip

Now, let $k\geq n_1+1$, which means that $x_k$ has non-holonomic order $\geq 2$. If $a_{ij}(x)$ depends on $x_k$, then necessarily $i\geq n_2+1$, since $a_{ij}(x)\partial_{x_i}$ has non-holonomic order $-1$. Thus, writing explicitly $\dot{\xi}_k=-\frac{\partial g^*}{\partial x_k}$ thanks to \eqref{e:g*redgoh}, there is in front of each term a factor $\xi_i$  for some $i$ which is $\geq n_2+1$. It is null by the previous conclusion, hence $\dot{\xi}_k\equiv 0$.
\end{proof}

\smallskip

The previous claim will help us reducing the complexity of the vector fields $X_i$ once again (after the first reduction provided by Lemma \ref{l:reduction}). Let us consider, for any $1\leq j\leq m$, the vector field
\begin{equation}\label{e:reducedvf}
X_j^{\rm red}=\sum_{i=1}^{n_2} a_{ij}(x)\partial_{x_i}
\end{equation}
where the sum is taken only up to $n_2$. We also consider the reduced Hamiltonian on $T^*M$
$$g^*_{\rm red}=\sum_{j=1}^m h_{X_j^{\rm red}}^2.$$

\smallskip

\emph{Claim 2.} If $X_1^{\rm red},\ldots,X_m^{\rm red}$ satisfy Property $\mathbf{(P)}$ at $q$, then $X_1,\ldots,X_m$ satisfy Property $\mathbf{(P)}$ at $q$.
\begin{proof}
Let us assume that $X_1^{\rm red},\ldots,X_m^{\rm red}$ satisfy Property $\mathbf{(P)}$ at $q$. Let $T_0>0$ and let $(x^{{\rm red}, \e}(0),\xi^{{\rm red}, \e}(0))$ be initial data for the Hamiltonian system associated to $g^*_{\rm red}$ which yield speed $1$ normal geodesics $(x^{{\rm red}, \e}(t),\xi^{{\rm red}, \e}(t))$ such that $x^{{\rm red}, \e}(t)\rightarrow q$ uniformly over $(0,T_0)$ as $\e\rightarrow 0$. 

\smallskip

We can assume without loss of generality that $\xi_i^{{\rm red}, \e}(0)=0$ for any $i\geq n_2+1$, since these momenta (preserved under the reduced Hamiltonian evolution) do not change the projection $x^{{\rm red}, \e}(t)$ of the normal geodesic. We consider $(x^\e(0),\xi^\e(0))=(x^{{\rm red}, \e}(0),\xi^{{\rm red}, \e}(0))$ as initial data for the (non-reduced) Hamiltonian evolution associated to $g^*$. Then we notice that $\xi_k^\e\equiv 0$ for $k\geq n_2+1$ thanks to Claim 1. It follows that when $i\leq n_2$, we have $x_i^\e(t)=x_i^{{\rm red}, \e}(t)$, i.e., the coordinate $x_i$ is the same for the reduced and the non-reduced Hamiltonian evolution.

\smallskip

Finally, we take $k$ such that $n_2+1\leq k\leq n_3$. Since $g^*$ is given by \eqref{e:g*redgoh}, we have
\begin{equation}\label{e:xkdot}
\dot{x}_k^\e=\frac{\partial g^*}{\partial \xi_k}=2\sum_{j=1}^m a_{kj}(x^\e)\left(\sum_{i=1}^n a_{ij}(x^\e)\xi^\e_i\right).
\end{equation}
But $a_{kj}$ has necessarily non-holonomic order $2$ since $\partial_{x_k}$ has non-holonomic order $-3$. Thus, $a_{kj}(x)$ is a non-constant homogeneous polynomial in $x_1,\ldots,x_{n_2}$. Since $x_1^\e,\ldots,x_{n_2}^\e$ converge to $q$ uniformly over $(0,T_0)$ as $\e\rightarrow 0$, it is also the case of $x_k^\e$ according to \eqref{e:xkdot}, noticing that 
$$
\left|\sum_{i=1}^n a_{ij}(x^\e)\xi^\e_i\right|\leq (g^*)^{1/2}=1/2
$$
for any $j$. In other words, $x_{n_2+1}^\e,\ldots,x_{n_3}^\e$ also converge to $q$ uniformly over $(0,T_0)$ as $\e\rightarrow 0$.

\smallskip

We can repeat this argument successively for $k\in \{n_3+1,\ldots,n_4\}$, $k\in\{n_4+1,\ldots,n_5\}$, etc, and we finally obtain the result: for any $1\leq k\leq n$, $x_k^\e$ converges to $q$ uniformly over $(0,T_0)$ as $\e\rightarrow 0$.
\end{proof}

\smallskip

Thanks to the previous claim, we are now reduced to prove Proposition  \ref{p:existencegeod} for the vector fields $X_1^{\rm red},\ldots,X_m^{\rm red}$. In order to keep notations as simple as possible, we simplify these notations into $X_1,\ldots,X_m$, i.e., we drop the upper notation ``red''. Also, without loss of generality we assume that $q=0$.

\smallskip

If we choose our normal geodesics so that $x(0)=0$, then $x_i\equiv 0$ for any $i\geq n_2+1$ thanks to \eqref{e:reducedvf}. In other words, we forget the coordinates $x_{n_2+1},\ldots,x_n$ in the sequel, since they all vanish.\footnote{Note that this is the case only because we are now working with the reduced Hamiltonian evolution; otherwise, under the original Hamiltonian evolution associated to \eqref{e:g*redgoh}, the $x_i$ (for $i\geq n_2+1$) remain small according to Claim 2, but do not necessarily vanish.}

%

\paragraph{Second step: conclusion of the proof.}

Now, we write the normal extremal system in its ``control'' form. We refer the reader to \cite[Chapter 4]{agrachev2019}. We have
\begin{equation}\label{e:controlform}
\dot{x}(t)=\sum_{i=1}^m u_i(t)X_i(x(t)),
\end{equation}
where the $u_i$ are the controls, explicitly given by
\begin{equation}\label{e:uexplicit}
u_i(t)=2h_{X_i}(x(t),\xi(t))
\end{equation} 
since $(x(t),\xi(t))=e^{t\vec{g}^*}(0,\xi_0)$. Thanks to \eqref{e:reducedvf}, we rewrite \eqref{e:controlform} as
\begin{equation}\label{e:diffeqxt}
\dot{x}(t)=F(x(t))u(t), 
\end{equation}
where $F=(a_{ij})$, which has size $n_2\times m$, and $u=\;^t(u_1,\ldots,u_m)$. Differentiating \eqref{e:uexplicit}, we have the complementary equation
$$
\dot{u}(t)=G(x(t),\xi(t))u(t)
$$
where $G$ is the Goh matrix
$$
G=(2\{h_{X_j},h_{X_i}\})_{1\leq i,j\leq m}
$$
(it differs from the usual Gox matrix by a factor $-2$ due to the absence of factor $\frac12$ in the Hamiltonian $g^*$ in our notations).

Let us prove that  $G(t)$ is constant in $t$. Fix $1\leq j,j'\leq m$. We notice that in \eqref{e:reducedvf},  $a_{ij}$ is a constant (independent of $x$) as soon as $1\leq i\leq n_1$ since $\partial_{x_i}$ has weight $-1$.  This implies that 
\begin{equation}\label{e:spandirectXi}
\text{$[X_j,X_{j'}]$ is spanned by the vector fields  $\partial_{x_{n_1+1}},\partial_{x_{n_1+2}},\ldots,\partial_{x_{n_2}}$.}
\end{equation}
  Putting this into the relation $\{h_{X_j},h_{X_{j'}}\}=h_{[X_j,X_{j'}]}$, and using that the dual variables $\xi_k$ for $n_1+1\leq k\leq n_2$ are preserved under the Hamiltonian evolution (due to Claim 1), we get that $G(t)\equiv G$ is constant in $t$. 

\smallskip

We know that $G\neq 0$ and that $G$ is antisymmetric. The whole control space $\R^m$ is the direct sum of the image of $G$ and the kernel of $G$, and $G$ is nondegenerate on its image.   We take $u_0$ in an invariant plane of $G$; in other words its projection on the kernel of $G$ vanishes (see Remark \ref{r:singular}). We denote by $\widetilde{G}$ the restriction of $G$ to this invariant plane. We also assume that $u_0$, decomposed as $u_0=(u_{01},\ldots,u_{0m})\in\R^m$, satisfies $\sum_{i=1}^m u_{0i}^2=1/4$. Then $u(t)=e^{t\widetilde{G}}u_0$ and since $e^{t\widetilde{G}}$ is an orthogonal matrix, we have $\|e^{t\widetilde{G}}u_0\|=\|u_0\|$. We have by integration by parts
\begin{align}
x(t)&=\int_0^tF(x(s))e^{s\widetilde{G}}u_0\,ds\nonumber\\
&=F(x(t))\widetilde{G}^{-1}(e^{t\widetilde{G}}-I)u_0-\int_0^t\frac{d}{ds}(F(x(s))\widetilde{G}^{-1}(e^{s\widetilde{G}}-I)u_0\,ds.\label{e:duhamel}
\end{align}

\smallskip

Let us now choose the initial data of our family of normal geodesics (indexed by $\e$). The starting point $x^\e(0)=0$ is the same for any $\e$, we only have to specify the initial covectors $\xi^\e=\xi^\e(0)\in T_0^*\R^n$. For any $i=1,\ldots,m$, we impose that
\begin{equation}\label{e:covyieldscontrol}
\langle \xi^\e, X_i\rangle=u_{0i}.
\end{equation}
It follows that $g^*(x(0),\xi^\e(0))=\sum_{i=1}^m u_{0i}^2=1/4$ for any $\e>0$. Now, we notice that $\text{Span}(X_1,\ldots,X_m)$ is in direct sum with the Span of the $[X_i,X_j]$ for $i,j$ running over $1,\ldots,m$ (this follows from \eqref{e:spandirectXi}). Fixing $G^0\neq 0$ an antisymmetric matrix and $\widetilde{G}^0$ its restriction to an invariant plane, we can specify, simultaneously to \eqref{e:covyieldscontrol}, that
$$
\langle \xi^\e, 2[X_j,X_i]\rangle = \varepsilon^{-1}G^0_{ij}.
$$
Then $x^\e(t)$ is given by \eqref{e:duhamel} applied with $\widetilde{G}=\e^{-1}\widetilde{G}^0$, which brings a factor $\e$ in front of \eqref{e:duhamel}.

\smallskip

Recall finally that the coefficients $a_{ij}$ which compose $F$ have non-holonomic order $0$ or $1$, thus they are degree $1$ (or constant) homogeneous polynomials in $x_1,\ldots,x_{n_1}$. Thus $\frac{d}{ds}(F(x(s))$ is a linear combination of $\dot{x}_i(s)$ which we can rewrite thanks to \eqref{e:diffeqxt} as a combination with bounded coefficients (since $\sum_{i=1}^m u_i^2=1/4$) of the $x_i(s)$. Hence, applying the Gronwall lemma in \eqref{e:duhamel}, we get $\|x^\e(t)\|\leq C\varepsilon$, which concludes the proof.
\begin{remark} \label{r:singular}
Let us explain why we choose $u_0$ to be in an invariant plane of $G$. If the projection of $u_0$ to the kernel of $G$ is nonzero then
the primitive of the exponential of $e^{\frac t\varepsilon G_0}u_0$ contains a linear term that does not depend on $\varepsilon$. Then the corresponding trajectory follows a singular curve (see \cite[Chapter 4]{agrachev2019} for a definition). This means, we find normal geodesics which spiral around a singular curve and do not remain close to their initial point over $(0,T_0)$, although their initial covector is ``high in the cylinder bundle $U^*M$''. For example, for the Hamiltonian $\xi_1^2+(\xi_2+x_1^2\xi_3)^2$  associated to the ``Martinet'' vector fields $X_1=\partial_{x_1}$, $X_2=\partial_{x_2}+x_1^2\partial_{x_3}$ in $\R^3$, there exist normal geodesics which spiral around the singular curve $(t,0,0)$.
\end{remark}

\begin{remark}
The normal geodesics constructed above lose their optimality quickly, in the sense that their first conjugate point and their cut-point are close to $q$. 
\end{remark}

\section{Proofs}

\subsection{Proof of Theorem \ref{t:main}} \label{s:concl}

In this section, we conclude the proof of Theorem \ref{t:main}.

\smallskip

Fix a point $q$ in the interior of $M\setminus \omega$ and $1\leq i,j\leq m$ such that $[X_i,X_j](q)\notin \mathcal{D}_q$. Fix also an open neighborhood $V$ of $q$ in $M$ such that $V\subset M\backslash\omega$. Fix $V'$ an open neighborhood of $q$ in $M$ such that $\overline{V'}\subset V$, and fix also $T_0>0$.

\smallskip

As already explained in Section \ref{s:ideas},  to conclude the proof of Theorem \ref{t:main}, we use Proposition \ref{p:exactgb} applied to the particular normal geodesics constructed in Proposition \ref{p:existencegeod}.

\smallskip

By Proposition \ref{p:existencegeod}, we know that there exists a normal geodesic $t\mapsto x(t)$ such that $x(t)\in V'$ for any $t\in (0,T_0)$. It is the projection of a bicharacteristic $(x(t),\xi(t))$ and since it is non-stationary and traveled at speed $1$, there holds $g^*(x(t),\xi(t))=1/4$. We denote by $(u_k)_{k\in\mathbb{N}}$ a sequence of solutions of \eqref{e:sRwave} as in Proposition \ref{p:exactgb} whose energy at time $t$ concentrates on $x(t)$ for $t\in (0,T_0)$. Because of \eqref{e:convenergyforu2}, we know that 
\begin{equation*}
\|(u_k(0),\partial_tu_k(0))\|_{\mathcal{H}\times L^2} \geq c>0
\end{equation*}
uniformly in $k$.

\smallskip

Therefore, in order to establish Theorem \ref{t:main}, it is sufficient to show that
\begin{equation} \label{e:suff}
\int_0^{T_0}\int_{\omega} |\partial_tu_k(t,x)|^2d\mu(x)dt\underset{k\rightarrow +\infty}{\rightarrow}0.
\end{equation}
Since $x(t)\in V'$ for any $t\in (0,T_0)$, we get that for $V_t$ chosen sufficiently small for any $t\in (0,T_0)$, the inclusion $V_t\subset V$ holds (see Proposition \ref{p:exactgb} for the definition of $V_t$). Combining this last remark with \eqref{e:decreaseenergy2}, we get \eqref{e:suff}, which concludes the proof of Theorem \ref{t:main}.


\subsection{Proof of Corollary \ref{c:contr}} \label{s:contr}

We endow the topological dual $\mathcal{H}(M)'$ with the norm $\|v\|_{\mathcal{H}(M)'}=\|(-\Delta)^{-1/2}v\|_{L^2(M)}$.

\smallskip

The following proposition is standard (see, e.g., \cite{tucsnak2009observation}, \cite{le2017geometric}).
\begin{lemma} \label{l:obslowerreg}
Let $T_0>0$, and $\omega\subset M$ be a measurable set. Then the following two observability properties are equivalent:

{\bf(P1):} There exists $C_{T_0}$ such that for any $(v_0,v_1)\in D((-\Delta)^{\frac12})\times L^2(M)$, the solution $v\in C^0(0,T_0; D((-\Delta)^{\frac12}))\cap C^1(0,T_0;L^2(M))$ of \eqref{e:system} satisfies 
\begin{equation} \label{e:strongobs2}
\int_0^{T_0} \int_\omega|\partial_t v(t,q)|^2d\mu(q)dt \geq C_{T_0} \|(v_0,v_1)\|_{\mathcal{H}(M)\times L^2(M)}.
\end{equation}

{\bf(P2):} There exists $C_{T_0}$ such that for any $(v_0,v_1)\in L^2(M)\times D((-\Delta)^{-\frac12})$, the solution $v\in C^0(0,T_0; L^2(M))\cap C^1(0,T_0;D((-\Delta)^{-\frac12}))$ of \eqref{e:system} satisfies 
\begin{equation} \label{e:obslower}
\int_0^{T_0} \int_\omega|v(t,q)|^2d\mu(q)dt \geq C_{T_0} \|(v_0,v_1)\|^2_{L^2\times \mathcal{H}(M)'}.
\end{equation}
\end{lemma}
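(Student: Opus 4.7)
My plan is to exploit the fact that the wave operator $\partial_{tt}^2 - \Delta$ commutes (formally) with functions of $-\Delta$, in particular with $(-\Delta)^{\pm 1/2}$, so that one can shift the regularity of the Cauchy data by $\pm 1$ via time-differentiation or time-integration. Concretely, I will prove the two implications separately, using differentiation in $t$ for (P1)$\Rightarrow$(P2) after an integration trick, and differentiation in $t$ directly for (P2)$\Rightarrow$(P1).

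For the implication (P1)$\Rightarrow$(P2), given $(v_0,v_1)\in L^2(M)\times D((-\Delta)^{-1/2})$ and the corresponding solution $v$, I would set
\begin{equation*}
u(t,x)=-(-\Delta)^{-1}v_1(x)+\int_0^t v(s,x)\,ds.
\end{equation*}
A direct computation shows that $u$ solves the homogeneous wave equation \eqref{e:system} with Cauchy data $(u_0,u_1)=(-(-\Delta)^{-1}v_1,\,v_0)$, which lies in $D((-\Delta)^{1/2})\times L^2(M)$ precisely because $v_1\in D((-\Delta)^{-1/2})$ and $v_0\in L^2$. Applying (P1) to $u$, and using that $\partial_t u=v$ together with $\|(-\Delta)^{1/2}u_0\|_{L^2}=\|(-\Delta)^{-1/2}v_1\|_{L^2}=\|v_1\|_{\mathcal{H}(M)'}$, the observability inequality \eqref{e:strongobs2} for $u$ becomes exactly \eqref{e:obslower} for $v$.

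For the converse, given $(v_0,v_1)\in D((-\Delta)^{1/2})\times L^2(M)$ and the solution $v$, I would observe that $w:=\partial_t v$ is itself a solution of \eqref{e:system}, with Cauchy data $(w_0,w_1)=(v_1,\Delta v_0)$. Here $v_1\in L^2$ and $\Delta v_0\in D((-\Delta)^{-1/2})$, the latter because $\|(-\Delta)^{-1/2}\Delta v_0\|_{L^2}=\|(-\Delta)^{1/2}v_0\|_{L^2}$ is finite. Applying (P2) to $w$ then yields \eqref{e:strongobs2} for $v$, since the right-hand side is precisely $C_{T_0}(\|v_1\|_{L^2}^2+\|v_0\|_{\mathcal{H}(M)}^2)$.

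The only subtlety, which is where I expect to have to be careful, is to justify rigorously that the regularity transfer above holds at the level of the solution spaces declared in the statement, i.e., that $u$ and $w$ really lie in the stated $C^0\cap C^1$ classes and that $\partial_t u=v$ and $\partial_t w = \partial_t v$ hold in the required sense. This is standard by density: one first proves the identities for smooth Cauchy data (where all manipulations are classical), then extends by continuity using the energy identity $\frac{d}{dt}E(u(t,\cdot))=0$ (and its analogue in the weaker topology $L^2\times \mathcal{H}(M)'$, obtained by duality) to control the solutions uniformly in $t$. Once this functional-analytic setup is in place, the two implications follow from the algebraic identities above.
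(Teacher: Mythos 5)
Your proof is correct and follows essentially the same route as the paper: the paper writes out only (P2)$\Rightarrow$(P1) by differentiating in time (taking $w=\partial_t v$ as you do) and then remarks that the converse is similar. Your explicit treatment of (P1)$\Rightarrow$(P2) via the antiderivative $u=-(-\Delta)^{-1}v_1+\int_0^t v\,ds$ is precisely the "similar" argument the paper has in mind, and your remark about justifying regularity by density matches the standard treatment.
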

\begin{proof}
Let us assume that (P2) holds. Let $u$ be a solution of \eqref{e:system} with initial conditions $(u_0,u_1)\in D((-\Delta)^{\frac12})\times L^2(M)$. We set $v=\partial_tu$, which is a solution of \eqref{e:system} with initial data $v_{|t=0}=u_1\in L^2(M)$ and $\partial_tv_{|t=0}=\Delta u_0\in D((-\Delta)^{-\frac12})$. Since $\|(v_0,v_1)\|_{L^2\times \mathcal{H}(M)'}=\|(u_1,\Delta u_0)\|_{L^2\times \mathcal{H}(M)'}=\|(u_0,u_1)\|_{\mathcal{H}(M)\times L^2}$, applying the observability inequality \eqref{e:obslower} to $v=\partial_tu$, we obtain \eqref{e:strongobs2}. The proof of the other implication is similar.
\end{proof}

Finally, using Theorem \ref{t:main}, Lemma \ref{l:obslowerreg} and the standard HUM method (\cite{lions1988controlabilite}), we get  Corollary \ref{c:contr}.

\subsection{Proof of Theorem \ref{t:truncatedobs}} \label{s:truncatedobs}

We consider the space of functions $u\in C^\infty([0,T]\times M_H)$ such that $\int_{M_H}u(t,\cdot)d\mu=0$ for any $t\in[0,T]$, and we denote by $\mathcal{H}_T$ its completion for the norm $\|\cdot\|_{\mathcal{H}_T}$ induced by the scalar product 
\begin{equation*}
(u,v)_{\mathcal{H}_T}=\int_0^T\int_{M_H} \left(\partial_tu\partial_tv+(X_1u)(X_1v)+(X_2u)(X_2v)\right) d\mu dt.
\end{equation*}
We consider also the topological dual $\mathcal{H}_0'$ of the space $\mathcal{H}_0$ (see Section \ref{s:truncatedobsstatement}).
\begin{lemma}\label{l:compactembedding}
The injections $\mathcal{H}_0\hookrightarrow L^2(M_H)$, $L^2(M_H) \hookrightarrow \mathcal{H}_0'$ and $\mathcal{H}_T\hookrightarrow L^2((0,T)\times M_H)$ are compact. 
\end{lemma}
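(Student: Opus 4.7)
The plan is to deduce all three compact embeddings from one fundamental fact: the subelliptic (Hörmander) estimate for the sum-of-squares operator $\Delta_H$ together with the classical Rellich--Kondrachov theorem on the compact manifold $M_H$.

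First I would handle $\mathcal{H}_0 \hookrightarrow L^2(M_H)$. Since $X_1,X_2$ satisfy Hörmander's condition with bracket step $2$, Hörmander's subelliptic estimate (e.g.\ \cite{hormander1967hypoelliptic}, or the equivalent statement in Rothschild--Stein) yields
\begin{equation*}
\|u\|_{H^{1/2}(M_H)} \leq C\bigl(\|u\|_{L^2(M_H)} + \|X_1 u\|_{L^2(M_H)} + \|X_2 u\|_{L^2(M_H)}\bigr)
\end{equation*}
for all $u \in C^\infty(M_H)$, where $H^{1/2}$ is the ordinary Sobolev space on the compact manifold $M_H$. The Poincaré inequality for $u$ with zero spatial mean (equivalent to invertibility of $-\Delta_H$ on $L^2_0$) lets me absorb the $\|u\|_{L^2}$ term, so that $\|u\|_{H^{1/2}(M_H)} \leq C\|u\|_{\mathcal{H}}$ on $\mathcal{H}_0$. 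The classical Rellich--Kondrachov theorem on the compact boundaryless manifold $M_H$ gives that $H^{1/2}(M_H) \hookrightarrow L^2(M_H)$ is compact, and composing yields the first compact embedding.

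The second embedding $L^2(M_H) \hookrightarrow \mathcal{H}_0'$ I would obtain by duality. Define the natural map $\iota: L^2(M_H) \to \mathcal{H}_0'$ by $\langle \iota f, v \rangle = \int_{M_H} f\, v\, d\mu$ for $v\in\mathcal{H}_0$; since $v$ has zero mean, $\iota$ factors through the orthogonal projection onto $L^2_0$. Up to this projection, $\iota$ is the adjoint of the inclusion $j:\mathcal{H}_0 \hookrightarrow L^2_0$ shown above to be compact, and the Schauder theorem that the adjoint of a compact operator is compact then gives compactness of $\iota$.

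Finally, for $\mathcal{H}_T \hookrightarrow L^2((0,T)\times M_H)$ I would invoke the Aubin--Lions lemma with the Gelfand triple $\mathcal{H}_0 \subset L^2_0 \subset \mathcal{H}_0'$. A function $u \in \mathcal{H}_T$ satisfies $u(t,\cdot)\in \mathcal{H}_0$ for a.e.\ $t$ (by the null-average condition and the control on $X_1u, X_2 u$ in $L^2$), so $u \in L^2(0,T;\mathcal{H}_0)$; and $\partial_t u \in L^2((0,T)\times M_H)$ with zero spatial mean, hence $\partial_t u \in L^2(0,T; L^2_0) \hookrightarrow L^2(0,T; \mathcal{H}_0')$. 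Since the inclusion $\mathcal{H}_0 \hookrightarrow L^2_0$ is compact (step one) and $L^2_0 \hookrightarrow \mathcal{H}_0'$ is continuous, the Aubin--Lions lemma yields compactness of
\begin{equation*}
\bigl\{u : u \in L^2(0,T;\mathcal{H}_0),\ \partial_t u \in L^2(0,T;\mathcal{H}_0')\bigr\} \hookrightarrow L^2(0,T; L^2_0) = L^2((0,T)\times M_H)\cap\{\text{zero spatial mean}\},
\end{equation*}
which contains $\mathcal{H}_T$ with continuous inclusion.

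The main technical point is the first embedding: the subelliptic gain of $1/2$ derivative in the Sobolev scale, which encodes the noncommutativity that makes $X_1,X_2$ hypoelliptic. Once this is granted, the second embedding is a formal duality argument and the third is a standard application of Aubin--Lions.
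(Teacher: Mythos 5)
Your proof is correct, and it rests on the same fundamental input as the paper's (the H\"ormander/Rothschild--Stein subelliptic estimate and the invertibility of $-\Delta_H$ on $L^2_0$), but you take a genuinely different route for the third embedding. The paper applies the subelliptic estimate directly in \emph{space-time}: since $\partial_t, X_1, X_2$ satisfy H\"ormander's condition on $(0,T)\times M_H$, it obtains $\|u\|_{H^{1/2}((0,T)\times M_H)} \leq C(\|u\|_{L^2} + \|u\|_{\mathcal{H}_T})$ in one shot, combines this with the Poincar\'e bound $\lambda_1\|u\|_{L^2((0,T)\times M_H)}^2 \leq \|u\|_{\mathcal{H}_T}^2$ (which it proves by expanding in eigenfunctions of $-\Delta_H$), and concludes by Rellich on the space-time domain; the first embedding is then noted to be analogous and the second follows by duality, exactly as you do. You instead use only the \emph{spatial} subelliptic estimate and handle the time variable through the Aubin--Lions--Simon lemma with the Gelfand triple $\mathcal{H}_0 \subset L^2_0 \subset \mathcal{H}_0'$. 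Both arguments are sound; the paper's is shorter because $\partial_t$ is itself one of the controlled vector fields in the $\mathcal{H}_T$-norm, making the space-time H\"ormander estimate immediately available and rendering Aubin--Lions unnecessary, whereas your version is more modular (deriving the time-dependent compactness from the time-independent one by an abstract lemma) and would generalize to norms that control $\partial_t u$ only in a weaker dual space. One small remark: you assert the Poincar\'e inequality on $L^2_0$ as ``equivalent to invertibility of $-\Delta_H$,'' which is stated in Section~1.5 of the paper; the paper's proof of the lemma supplies the short argument for $\lambda_1>0$ (a $\Delta_H$-harmonic function is smooth by hypoellipticity, annihilated by $X_1, X_2$ hence by $[X_1,X_2]$, hence constant), which you may want to include for completeness.
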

\begin{proof}
Let $(\varphi_k)_{k\in\N}$ be an orthonormal basis of real eigenfunctions of $L^2(M_H)$, labeled with increasing eigenvalues $0=\lambda_0<\lambda_1\leq \ldots\leq \lambda_k \rightarrow +\infty$, so that $-\Delta_H \varphi_k=\lambda_k\varphi_k$. The fact that $\lambda_1>0$, which will be used in the sequel, can be proved as follows: if $-\Delta_H\varphi=0$ then $\int_{M_H} ((X_1\varphi)^2+(X_2\varphi)^2)\,d\mu=0$ and, since $\varphi\in C^\infty(M_H)$ by hypoelliptic regularity, we get $X_1\varphi(x)=X_2\varphi(x)=0$ for any $x\in M_H$. Hence, $[X_1,X_2]\varphi\equiv 0$, and alltogether, this proves that $\varphi$ is constant, hence $\lambda_1>0$.

\smallskip

We prove the last injection. Let $u\in\mathcal{H}_T$. Writing $u(t,\cdot)=\sum_{k=1}^{\infty} a_k(t)\varphi_k(\cdot)$ (note that there is no $0$-mode since $u(t,\cdot)$ has null average), we see that 
\begin{align*}
\|u\|_{\mathcal{H}_T}^2\geq (-\Delta_H u,u)_{L^2((0,T)\times M_H)}=\sum_{k=1}^\infty \lambda_k \|a_k\|_{L^2((0,T))}^2 &\geq \lambda_1\sum_{k=1}^\infty \|a_k\|_{L^2((0,T))}^2 \\
&= \lambda_1 \|u\|_{L^2((0,T)\times M_H)}^2,
\end{align*}
thus $\mathcal{H}_T$ imbeds continuously into $L^2((0,T)\times M_H)$. Then, using a classical subelliptic estimate (see \cite{hormander1967hypoelliptic} and \cite[Theorem 17]{rothschild1976hypoelliptic}), we know that there exists $C>0$ such that
\begin{equation*}\label{e:stein}
\|u\|_{H^{\frac12}((0,T)\times M_H)}\leq C(\|u\|_{L^2((0,T) \times M_H)}+\|u\|_{\mathcal{H}_T}).
\end{equation*} 
Together with the previous estimate, we obtain that for any $u\in\mathcal{H}_T$, $\|u\|_{H^{\frac12}((0,T)\times M_H)}\leq C\|u\|_{\mathcal{H}_T}$. Then, the result follows from the fact that the injection $H^{\frac12}((0,T)\times M_H)\hookrightarrow L^2((0,T)\times M_H)$ is compact.

\smallskip

The proof of the compact injection $\mathcal{H}_0\hookrightarrow L^2(M_H)$ is similar, and the compact injection $ L^2(M_H)\hookrightarrow \mathcal{H}_0'$ follows by duality.
\end{proof}

\begin{proof}[Proof of Theorem \ref{t:truncatedobs}]
In this proof, we use the notation $P=\partial_{tt}^2-\Delta_H$. For the sake of a contradiction, suppose that there exists a sequence $(u^k)_{k\in\mathbb{N}}$ of solutions of the wave equation such that $\|(u^k_0,u^k_1)\|_{\mathcal{H}\times L^2}=1$ for any $k\in\mathbb{N}$ and
\begin{equation} \label{e:twoconvweakobs}
\|(u_0^k,u_1^k)\|_{L^2\times \mathcal{H}_0'}\rightarrow 0,\quad \int_0^T|(\Op(a)\partial_tu^k,\partial_tu^k)_{L^2(M_H,\mu)}|dt\rightarrow 0
\end{equation}  
as $k\rightarrow +\infty$. Following the strategy of \cite{tartar1990h} and \cite{gerard1991mesures}, our goal is to associate a defect measure to the sequence $(u^k)_{k\in\mathbb{N}}$. Since the functional spaces involved in our result are unusual, we give the argument in detail. 

\smallskip

First, up to extraction of a subsequence which we omit, $(u^k_0,u^k_1)$ converges weakly in $\mathcal{H}_0\times L^2(M_H)$ and, using the first convergence in \eqref{e:twoconvweakobs} and the compact embedding $\mathcal{H}_0\times L^2(M_H)\hookrightarrow L^2(M_H)\times \mathcal{H}_0'$, we get that $(u^k_0,u^k_1)\rightharpoonup 0$ in $\mathcal{H}_0\times L^2_0$. Using the continuity of the solution with respect to the initial data, we obtain that $u^k\rightharpoonup 0$ weakly in $\mathcal{H}_T$. Using Lemma \ref{l:compactembedding}, we obtain $u^k\rightarrow 0$ strongly in $L^2((0,T)\times M_H)$.

\smallskip

Fix $B\in \Psi_{\phg}^0((0,T)\times M_H)$. We have 
\begin{align}
&(Bu^k,u^k)_{\mathcal{H}_T}\nonumber\\
&\quad =\int_0^T\int_{M_H} \left(\left(\partial_tBu^k\right)\left(\partial_tu^k\right)+\left(X_1Bu^k\right)\left(X_1u^k\right)+\left(X_2Bu^k\right)\left(X_2u^k\right)\right) d\mu(q)dt \nonumber\\
&\quad=\int_0^T\int_{M_H} \left(\left([\partial_t,B]u^k\right)\left(\partial_tu^k\right)+\left([X_1,B]u^k\right)\left(X_1u^k\right)+\left([X_2,B]u^k\right)\left(X_2u^k\right)\right) d\mu(q)dt  \nonumber \\
&\qquad +\int_0^T\int_{M_H} \left(\left(B\partial_t u^k\right)\left( \partial_tu^k\right)+\left(BX_1u^k\right) \left(X_1u^k\right)+\left(BX_2u^k\right) \left(X_2u^k\right)\right)d\mu(q)dt \label{e:bracketarg}
\end{align}
Since $[\partial_t,B]\in \Psi_{\phg}^0((0,T)\times M_H)$, $[X_j,B]\in \Psi_{\phg}^0((0,T)\times M_H)$ and $u^k\rightarrow 0$ strongly in $L^2((0,T)\times M_H)$, the first one of the two lines in \eqref{e:bracketarg} converges to $0$ as $k\rightarrow +\infty$. Moreover, the last line is bounded uniformly in $k$ since $B\in\Psi_{\phg}^0((0,T)\times M_H)$. Hence $(Bu^k,u^k)_{\mathcal{H}_T}$ is uniformly bounded. By a standard diagonal extraction argument (see \cite{gerard1991mesures} for example), there exists a subsequence, which we still denote by $(u^k)_{k\in\mathbb{N}}$ such that $(Bu^k,u^k)$ converges for any $B$ of principal symbol $b$ in a countable dense subset of $C_c^\infty((0,T)\times M_H)$. Moreover, the limit only depends on the principal symbol $b$, and not on the full symbol.

\smallskip

Let us now prove that
\begin{equation} \label{e:garding1}
\liminf_{k\rightarrow +\infty} \ (Bu^k,u^k)_{\mathcal{H}_T} \geq 0
\end{equation}
when $b\geq 0$. With a bracket argument as in \eqref{e:bracketarg}, we see that it is equivalent to proving that the liminf as $k\rightarrow +\infty$ of the quantity
\begin{equation} \label{e:garding2}
Q_k(B)= (B\partial_tu^k,\partial_tu^k)_{L^2}+(BX_1u^k,X_1u^k)_{L^2}+(BX_2u^k,X_2u^k)_{L^2}
\end{equation} 
is $\geq 0$. But there exists $B'\in \Psi_{\phg}^0((0,T)\times M_H)$ such that $B'-B\in\Psi_{\phg}^{-1}((0,T)\times M_H)$ and $B'$ is positive (this is the so-called Friedrichs quantization, see for example \cite[Chapter VII]{taylorpseudodifferential}). Then, $\liminf_{k\rightarrow +\infty} Q_k(B')\geq0$, and $Q_k(B'-B)\rightarrow 0$ since $(B'-B)\partial_t\in\Psi^0_\phg((0,T)\times M_H)$ and $u^k\rightarrow 0$ strongly in $L^2((0,T)\times M_H)$. It immediately implies that  \eqref{e:garding1} holds.

\smallskip

Therefore, setting $p=\sigma_p(P)$ and denoting by $\mathcal{C}(p)$ the characteristic manifold $\mathcal{C}(p)=\{p=0\}$, there exists a non-negative Radon measure $\nu$ on $S^*(\mathcal{C}(p))=\mathcal{C}(p)/(0,+\infty)$ such that
\begin{equation*}
(\Op(b)u^k,u^k)_{\mathcal{H}_T}\rightarrow \int_{S^*(\mathcal{C}(p))} bd\nu
\end{equation*}
for any $b\in S_{\phg}^0((0,T)\times M_H)$. 

\smallskip

Let $C\in \Psi_{\phg}^{-1}((0,T)\times M_H)$ of principal symbol $c$. We have $\vec{p}c=\{p,c\}\in S_{\phg}^0((0,T)\times M_H)$ and, for any $k\in \mathbb{N}$,  
\begin{equation} \label{e:comPC}
((CP-PC)u^k,u^k)_{\mathcal{H}_T} = (CPu^k,u^k)_{\mathcal{H}_T}-(Cu^k,Pu^k)_{\mathcal{H}_T}=0
\end{equation}
since $Pu^k=0$. To be fully rigorous, the identity of the previous line, which holds for any solution $u\in\mathcal{H}_T$ of the wave equation, is first proved for smooth initial data since $Pu\notin \mathcal{H}_T$ in general, and then extended to general solutions $u\in\mathcal{H}_T$. Taking principal symbols in \eqref{e:comPC}, we get $\langle \nu,\vec{p}c\rangle=0$. 

\smallskip

Therefore, denoting by $(\psi_s)_{s\in\R}$ the maximal solutions of
\begin{equation*}
\frac{d}{ds}\psi_s(\rho)=\vec{p}(\psi_s(\rho)), \qquad \rho\in T^*(\R\times M_H)
\end{equation*}
(see \eqref{e:bicarac1}), we get that, for any $s\in(0,T)$,
\begin{equation*}
0=\langle \nu,\vec{p}c\circ\psi_s\rangle=\langle \nu, \frac{d}{ds}c\circ\psi_s\rangle=\frac{d}{ds}\langle \nu,c\circ\psi_s\rangle
\end{equation*}
and hence 
\begin{equation}\label{e:propagmu2}
\langle \nu,c\rangle=\langle \nu,c\circ \psi_s\rangle.
\end{equation}
We note here that the precise homogeneity of $c$ (namely $c\in S_{\phg}^{-1}((0,T)\times M_H)$) does not matter since $\nu$ is a measure on the sphere bundle $S^*(\mathcal{C}(p))$. The identity \eqref{e:propagmu2} means that $\nu$ is invariant under the flow $\vec{p}$.

\smallskip

From the second convergence in \eqref{e:twoconvweakobs}, we can deduce that
\begin{equation}\label{e:nullitynu}
\nu=0 \text{  in  } S^*(\mathcal{C}(p)) \cap T^*((0,T)\times \text{Supp}(a)).
\end{equation}
The proof of this fact, which is standard (see for example \cite[Section 6.2]{burq2002controle}), is given in Appendix \ref{a:proofnullitynu}.

\smallskip

Let us prove that any normal geodesic of $M_H$ with momentum $\xi\in V_\varepsilon^c$ enters $\omega$ in time at most $\kappa\varepsilon^{-1}$ for some $\kappa>0$ which does not depend on $\varepsilon$. Indeed, the solutions of the bicharacteristic equations \eqref{e:bicharH} with $g^*=1/4$ and $\xi_3\neq0$ are given by
\begin{align*}
&x_1(t)=\frac{1}{2\xi_3}\cos(2\xi_3t+\phi)+\frac{\xi_2}{\xi_3}, \qquad x_2(t)=B-\frac{1}{2\xi_3}\sin(2\xi_3t+\phi)\\
&\qquad x_3(t)=C+\frac{t}{4\xi_3}+\frac{1}{16\xi_3^2}\sin(2(2\xi_3t+\phi))+\frac{\xi_2}{2\xi_3^2}\sin(2\xi_3t+\phi)
\end{align*}
where $B,C,\xi_2,\xi_3$ are constants. Since $\xi\in V_\varepsilon^c$ and $g^*=1/4$, there holds $\frac{1}{4|\xi_3|}\geq \frac{\varepsilon}{2}$. Hence, we can conclude using the expression for $x_3$ (whose derivative is roughly $(4|\xi_3|)^{-1}$) and the fact that $\omega=M_H\backslash B$ contains a horizontal strip. Note that if $\xi_3=0$, the expressions of $x_1(t), x_2(t), x_3(t)$ are much simpler and we can conclude similarly.

\smallskip
 
 Hence, together with \eqref{e:nullitynu}, the propagation property \eqref{e:propagmu2} implies that $\nu\equiv 0$. It follows that $\|u^k\|_{\mathcal{H}_T}\rightarrow 0$. By conservation of energy, it is a contradiction with the normalization $\|(u^k_0,u^k_1)\|_{\mathcal{H}\times L^2}=1$. Hence, \eqref{e:obstruncated} holds.
\end{proof}

\appendix

\section{Pseudodifferential calculus} \label{a:pseudo}
We denote by $\Omega$ an open set of a $d$-dimensional manifold (typically $d=n$ or $d=n+1$ with the notations of this paper) equipped with a smooth volume $\mu$. We denote by $q$ the variable in $\Omega$, typically $q=x$ or $q=(t,x)$ with our notations. 

\smallskip

Let $\omega_0=dp\wedge dq$ be the canonical symplectic form on $T^*\Omega$ written in canonical coordinates $(q,p)$. The Hamiltonian vector field $\vec{f}$ of a function $f\in C^\infty(T^*\Omega)$ is defined by the relation $$\omega_0(\vec{f},\cdot)=-df(\cdot).$$  In the coordinates $(q,p)$, it reads
\begin{equation*}
\vec{f}=\sum_{j=1}^d (\partial_{p_j}f) \partial_{q_j} -(\partial_{q_j} f)\partial_{p_j}.
\end{equation*}
In these coordinates, the Poisson bracket is 
\begin{equation*}
\{f,g\}=\omega_0(\vec{f},\vec{g})=\sum_{j=1}^d (\partial_{p_j}f) (\partial_{q_j}g) -(\partial_{q_j} f)(\partial_{p_j}g),
\end{equation*}
which is also equal to $\vec{f}g$ and $-\vec{g}f$.

\smallskip

Let $\pi:T^*\Omega\rightarrow \Omega$ be the canonical projection. We recall briefly some facts concerning pseudodifferential calculus, following \cite[Chapter 18]{hormander2007analysis}.

\smallskip

We denote by $S_\hom^m(T^*\Omega)$ the set of homogeneous symbols of degree $m$ with compact support in $\Omega$. We also write $S_\phg^m(T^*\Omega)$ the set of polyhomogeneous symbols of degree $m$ with compact support in $\Omega$. Hence, $a\in S_\phg^m(T^*\Omega)$ if $a\in C^\infty(T^*\Omega)$, $\pi(\supp(a))$ is a compact of $\Omega$, and there exist $a_j\in S^{m-j}_\hom(T^*\Omega)$ such that for all $N\in \mathbb{N}$, $a-\sum_{j=0}^N a_j\in S_\phg^{m-N-1}(T^*\Omega)$. We denote by $\Psi^m_\phg(T^*\Omega)$ the space of polyhomogeneous pseudodifferential operators of order $m$ on $\Omega$, with a compactly supported kernel in $\Omega\times\Omega$. For $A\in \Psi_\phg^m(\Omega)$, we denote by $\sigma_p(A)\in S^m_\phg(T^*\Omega)$ the principal symbol of $A$. The sub-principal symbol is characterized by the action of pseudodifferential operators on oscillating functions: if $A\in \Psi_\phg^m(\Omega)$ and $f(q)=b(q)e^{ikS(q)}$ with $b,S$ smooth and real-valued, then
\begin{equation*}
\int_\Omega A(f)\overline{f}d\mu=k^m\int_\Omega \left(\sigma_p(A)(q,S'(q))+\frac{1}{k}\sigma_{\text{sub}}(A)(q,S'(q))\right)|f(q)|^2d\mu(q)+O(k^{m-2}).
\end{equation*}

A quantization is a continuous linear mapping
\begin{equation*}
\Op:S^m_\phg(T^*\Omega)\rightarrow \Psi^m_\phg(\Omega)
\end{equation*}
satisfying $\sigma_p(\Op(a))=a$. An example of quantization is obtained by using partitions of unity and, locally, the Weyl quantization, which is given in local coordinates by
\begin{equation*}
\Op^W(a)f(q)=\frac{1}{(2\pi)^d}\int_{\R^d_{q'}\times \R^d_{p}}e^{i\langle q-q',p\rangle}a\left(\frac{q+q'}{2},p\right)f(q')dq'dp.
\end{equation*}

We have the following properties:
\begin{enumerate}
\item If $A\in \Psi^l_\phg(\Omega)$ and $B\in \Psi^m_\phg(\Omega)$, then $[A,B]\in\Psi^{l+m-1}_\phg(\Omega)$ and $\sigma_p([A,B])=\frac{1}{i}\{\sigma_p(a),\sigma_p(b)\}$.
\item If $X$ is a vector field on $\Omega$ and $X^*$ is its formal adjoint in $L^2(\Omega,\mu)$, then $X^*X\in \Psi^2_\phg(\Omega)$, $\sigma_p(X^*X)=h_X^2$ and $\sigma_{\text{sub}}(X^*X)=0$.
\item If $A\in \Psi_\phg^m(\Omega)$, then A maps continuously the space $H^s(\Omega)$ to the space $H^{s-m}(\Omega)$. 
\end{enumerate}

\section{Proof of Proposition \ref{p:approxgb}} \label{a:approxgb}
In this Appendix, we give a second proof of Proposition \ref{p:approxgb} written in a more elementary form than the one of Section \ref{s:gbapprox}. Let us first prove the result when $M\subset\R^n$, following the proof of \cite{ralston1982gaussian}. The general case is addressed at the end of this section.

\smallskip

As in the proof of Section \ref{s:gbapprox}, we suppress the time variable $t$. Thus we use $x=(x_0,x_1,\ldots,x_n)$ where $x_0=t$. Similarly, $\xi=(\xi_0,\xi_1,\ldots,\xi_n)$ where $\xi_0=\tau$ previously. Let $\Gamma$ be the curve given by $x(s)\in\R^{n+1}$. We insist on the fact that in the proof the bicharacteristics are parametrized  by $s$, as in \eqref{e:bicarac1}. We consider functions of the form 
\begin{equation*}
v_k(x)=k^{\frac{n}{4}-1}a_0(x)e^{ik\psi(x)}.
\end{equation*}

We would like to choose $\psi(x)$ such that for all $s\in \R$, $\psi(x(s))$ is real-valued and $\text{Im} \frac{\partial^2\psi}{\partial x_i\d x_j}(x(s))$ is positive definite on vectors orthogonal to $\dot{x}(s)$. Roughly speaking, $|e^{ik\psi(x)}|$ will then look like a Gaussian distribution on planes perpendicular to $\Gamma$ in $\R^{n+1}$.

\smallskip

We first observe that $\partial_{tt}^2v_k-\Delta v_k$ can be decomposed as
\begin{equation} \label{e:opondesvk}
\partial_{tt}^2v_k-\Delta v_k=(k^{\frac{n}{4}+1}A_1+k^{\frac{n}{4}}A_2+k^{\frac{n}{4}-1}A_3)e^{ik\psi}
\end{equation}
with 
\begin{align}
A_1(x)&=p_2\left(x,\nabla\psi(x)\right)a_0(x) \nonumber \\
A_2(x)&=La_0(x)\label{e:A2} \nonumber\\
A_3(x)&=\partial_{tt}^2a_0(x)-\Delta a_0(x).\nonumber
\end{align}
Here we have set
\begin{equation} \label{e:La0}
La_0=\frac{1}{i}\sum_{j=0}^n\frac{\partial p_2}{\partial \xi_j}\left(x,\nabla\psi(x)\right)\frac{\d a_0}{\d x_j}+\frac{1}{2i}\left(\sum_{j,k=0}^n\frac{\partial^2p_2}{\partial\xi_j\partial\xi_k}\left(x,\nabla\psi(x)\right)\frac{\partial^2\psi}{\partial x_j\partial x_k}\right)a_0
\end{equation}
(For general strictly hyperbolic operators, $L$ contains a term with the sub-principal symbol of the operator, but here it is null, see Appendix \ref{a:pseudo}.)

\smallskip

In what follows, we construct $a_0$ and $\psi$ so that $A_1(x)$ vanishes at order $2$ along $\Gamma$ and $A_2(x)$ vanishes at order $0$ along the same curve. We will then be able to use Lemma \ref{l:oscint} with $S=3$ and $S=1$ respectively.

\smallskip

\paragraph{Analysis of $A_1(x)$.} Our goal is to show that, if we choose $\psi$ adequately, we can make the quantity
\begin{equation} \label{e:forder0}
f(x)=p_2\left(x,\nabla\psi(x)\right)
\end{equation} 
vanish at order $2$ on $\Gamma$. For the vanishing at order $0$, we prescribe that $\psi$ satisfies $\nabla\psi(x(s))=\xi(s)$, and then $f(x(s))=0$ since $(x(s),\xi(s))$ is a null-bicharacteristic. Note that this is possible since $x(s)\neq x(s')$ for any $s\neq s'$, due to $\dot{x}_0(s)=1$ (bicharacteristics are traveled at speed $1$, see Section \ref{s:normal}). For the vanishing at order $1$, using \eqref{e:forder0} and \eqref{e:bicarac1}, we remark that for any $0\leq j\leq n$,
\begin{align}
\frac{\partial f}{\partial x_j}(x(s))&=\frac{\partial p_2}{\partial x_{j}}(x(s))+\sum_{k=0}^n\frac{\partial p_2}{\partial \xi_{k}}(x(s))\frac{\partial \psi}{\partial x_j \partial x_k}(x(s)) \nonumber \\
&=-\dot{\xi}_j(s)+\sum_{k=0}^n\dot{x}_k(s)\frac{\partial \psi}{\partial x_j \partial x_k}(x(s)) \label{e:firstorderM} \\
&=-\frac{d}{ds}\left(\frac{\d\psi}{\d x_j}(x(s))\right)+\sum_{k=0}^n\dot{x}_k(s)\frac{\partial \psi}{\partial x_j \partial x_k}(x(s))\nonumber \\
&=0. \nonumber
\end{align}
Therefore, $f$ vanishes automatically at order $1$ along $\Gamma$ (without making any particular choice for $\psi$): it just follows from \eqref{e:forder0} and the bicharacteristic equations \eqref{e:bicarac1}. But for $f(x)$ to vanish at order $2$ along $\Gamma$, it is required to choose a particular $\psi$. In the end, we will find that if $\psi$ is given by the formula \eqref{e:exprpsi} below, with $M$ being a solution of \eqref{e:systemM}, then  $f$ vanishes at order $2$ along $\Gamma$. Let us explain why.

\smallskip

Using the Einstein summation notation, we want that for any $0\leq i,j\leq n$, there holds
\begin{align}\label{e:eqsecondorder}
0&=\frac{\partial^2f}{\partial x_j\partial x_i}\nonumber \\
&=\frac{\partial^2p_2}{\partial x_j\partial x_i}+ \frac{\partial^2p_2}{\partial \xi_k\partial x_i}\frac{\partial^2\psi}{\partial x_j\partial x_k}+\frac{\partial^2p_2}{\partial x_j\partial \xi_k}\frac{\partial^2\psi}{\partial x_i\partial x_k}+\frac{\partial^2p_2}{\partial \xi_l\partial \xi_k}\frac{\partial^2\psi}{\partial x_i\partial x_k}\frac{\partial^2\psi}{\partial x_j\partial x_l}+\frac{\partial p_2}{\partial \xi_k}\frac{\partial^3\psi}{\partial x_j\partial x_k\partial x_i}\nonumber
\end{align}
along $\Gamma$. Introducing the matrices
\begin{align}
&(M(s))_{ij}=\frac{\partial^2\psi}{\partial x_i\partial x_j}(x(s)), &(A(s))_{ij}=\frac{\partial^2 p_2}{\partial x_i\partial x_j}(x(s),\xi(s)), \nonumber \\
&(B(s))_{ij}=\frac{\partial^2 p_2}{\partial \xi_i\partial x_j}(x(s),\xi(s)), &(C(s))_{ij}=\frac{\partial^2 p_2}{\partial \xi_i\partial \xi_j}(x(s),\xi(s)) \nonumber
\end{align}
this amounts to solving the matricial Riccati equation
\begin{equation} \label{e:systemM}
\frac{dM}{ds}+MCM+B^TM+MB+A=0
\end{equation}
on a finite-length time-interval. While solving \eqref{e:systemM}, we also require $M(s)$ to be symmetric, $\text{Im}(M(s))$ to be positive definite on the orthogonal complement of $\dot{x}(s)$, and $M(s)\dot{x}(s)=\dot{\xi}(s)$ to hold for all $s$ due to \eqref{e:firstorderM}.

\smallskip

Let $M_0$ be a symmetric $(n+1)\times (n+1)$ matrix with $\text{Im}(M_0)>0$ on the orthogonal complement of $\dot{x}(0)$ and $M_0\dot{x}(0)=\dot{\xi}(0)$ (in particular $\text{Im}(M_0)\dot{x}(0)=0$). It is shown in \cite{ralston1982gaussian} that there exists a global solution $M(s)$ on $[0,T]$ of \eqref{e:systemM} which satisfies all the above conditions and such that $M(0)=M_0$. The proof just requires that $A,C$ are symmetric, but does not need anything special about $p_2$ (in particular, it applies to our sub-Riemannian case where $p_2$ is degenerate). For the sake of completeness, we recall the proof here.

\smallskip

We consider $(Y(s),N(s))$ the matrix solution with initial data $(Y(0),N(0))=(\text{Id},M_0)$ (where $\text{Id}$ is the $(n+1)\times (n+1)$ identity matrix) to the linear system
\begin{equation} \label{e:systemyeta}
\left\lbrace \begin{array}{ll}
\dot{Y}=BY+CN\\
\dot{N}=-AY-B^TN.
\end{array}\right.
\end{equation}
We note that $(Y(s)\dot{x}(0),N(s)\dot{x}(0))$ then also solves \eqref{e:systemyeta}, with $Y$ and $N$ being this time vectorial. One can check that $(\dot{x}(s),\dot{\xi}(s))$ is the solution of the same linear system with same initial data, and therefore, for any $s\in\R$,
\begin{equation} \label{explsolYN}
\dot{x}(s)=Y(s)\dot{x}(0), \qquad \qquad \dot{\xi}(s)=N(s)\dot{x}(0).
\end{equation}

All the coefficients in \eqref{e:systemyeta} are real and $A$ and $C$ are symmetric, and it follows that the flow defined by \eqref{e:systemyeta} \emph{on vectors} preserves both the real symplectic form acting on pairs $(y,\eta)\in (\R^{n+1})^2$ and $(y',\eta')\in(\R^{n+1})^2$ given by
\begin{equation*}
\sigma((y,\eta),(y',\eta'))=y\cdot \eta'-\eta\cdot y'
\end{equation*}
and the complexified form $\sigma_{\mathbb{C}}((y,\eta),(y',\eta'))=\sigma((y,\eta),(\overline{y'},\overline{\eta'}))$ for $(y,\eta)\in (\mathbb{C}^{n+1})^2$ and $(y',\eta')\in (\mathbb{C}^{n+1})^2$. When we say that $\sigma_{\mathbb{C}}$ is invariant under \eqref{e:systemyeta}, it means that we allow complex vectorial initial data in \eqref{e:systemyeta}. 

\smallskip

Let us prove that $Y(s)$ is invertible for any $s$. Let $v\in\mathbb{C}^{n+1}$ and $s_0\in\R$ be such that $Y(s_0)v=0$. We set $y(s_0)=Y(s_0)v$ and $\eta(s_0)=N(s_0)v$ and consider $\chi(s_0)=(y(s_0),\eta(s_0))$. From the conservation of $\sigma_{\mathbb{C}}$, we get
\begin{align*}
0=\sigma_{\mathbb{C}}(\chi(s_0),\chi(s_0))=\sigma_{\C}(\chi(0),\chi(0))=v\cdot \overline{M_0v}-\overline{v}\cdot M_0v=-2i\overline{v}\cdot (\text{Im}(M_0))v.
\end{align*}
Since $\text{Im}(M_0)$ is positive definite on the orthogonal complement to $\dot{x}(0)$, there holds $v=\lambda \dot{x}(0)$ for some $\lambda\in\C$. Hence 
\begin{equation*}
0=Y(s_0)v=\lambda Y(s_0)\dot{x}(0)=\lambda \dot{x}(s_0)
\end{equation*}
where the last equality comes from \eqref{explsolYN}. Since $\dot{x}_0(s_0)=\frac{\partial p_2}{\partial\xi_0}(s_0)=-2\xi_0(s_0)=1$, there holds $\dot{x}(s_0)\neq0$, hence $\lambda=0$. It follows that $v=0$ and $Y(s_0)$ is invertible.

\smallskip

Now, for any $s\in\R$, we set
\begin{equation*}
M(s)=N(s)Y(s)^{-1}
\end{equation*}
which is a solution of \eqref{e:systemM} with $M(0)=M_0$. It verifies $M(s)\dot{x}(s)=\dot{\xi}(s)$ thanks to \eqref{explsolYN}. Moreover, it is symmetric: if we denote by $y^i(s)$ and $\eta^i(s)$ the column vectors of $Y$ and $N$, by preservation of $\sigma$, for any $0\leq i,j\leq n$, the quantity
\begin{equation*}
\sigma((y^i(s),\eta^i(s)),(y^j(s),\eta^j(s))=y^i(s)\cdot M(s)y^j(s)-y^j(s)\cdot M(s)y^i(s)
\end{equation*}
is equal to the same quantity at $s=0$, which is equal to $0$ since $M_0$ is symmetric.

\smallskip

Let us finally prove that for any $s\in\R$, $\text{Im}(M(s))$ is positive definite on the orthogonal complement of $\dot{x}(s)$. Let $y(s_0)\in\mathbb{C}^{n+1}$ be in the orthogonal complement of $\dot{x}(s_0)$. We decompose $y(s_0)$ on the column vectors of $Y(s_0)$:
\begin{equation*}
y(s_0)=\sum_{i=0}^n b_iy^i(s_0), \qquad b_i\in\C.
\end{equation*}
For $s\in\R$, we consider $y(s)=\sum_{i=0}^n b_iy^i(s)$ and we set $\chi(s)=\sum_{i=0}^n b_i(y^i(s),\eta^i(s))$. Then,
\begin{equation} \label{e:exprchis}
\sigma_\C(\chi(s),\chi(s))=-2i\overline{y(s)}\cdot \text{Im}(M(s))y(s).
\end{equation}
 By preservation of $\sigma_\C$ and using \eqref{e:exprchis}, we get that 
\begin{equation} \label{e:ImM}
\overline{y(s_0)}\cdot \text{Im}(M(s_0))y(s_0)=\overline{y(0)}\cdot \text{Im}(M_0)y(0).
\end{equation}
But $y(0)$ cannot be proportional to $\dot{x}(0)$ otherwise, using \eqref{explsolYN}, we would get that $y(s_0)$ is proportional to $\dot{x}(s_0)$. Hence, the right hand side in \eqref{e:ImM} is $>0$, which implies that $\text{Im}(M(s_0))$ is positive definite on the orthogonal complement to $\dot{x}(s_0)$.

\smallskip

Therefore, we found a choice for the second order derivatives of $\psi$ along $\Gamma$ which meets all our conditions. For $x=(t,x')\in\R\times\R^{n}$ and $s$ such that $t=t(s)$, we set
\begin{equation} \label{e:exprpsi}
\psi(x)=\xi'(s)\cdot (x'-x'(s))+\frac12 (x'-x'(s))\cdot M(s)(x'-x'(s)),
\end{equation} 
and for this choice of $\psi$, $f$ vanishes at order $2$ along $\Gamma$.

To sum up, as in the Riemannian (or ``strictly hyperbolic") case handled by Ralston in \cite{ralston1982gaussian}, the key observation is that the invariance of $\sigma$ and $\sigma_{\mathbb{C}}$ prevents the solutions of \eqref{e:systemM} with positive imaginary part on the orthogonal complement of $\dot{x}(0)$ to blowup.

\paragraph{Analysis of $A_2(x)$.} We note that $A_2$ vanishes along $\Gamma$ if and only if $La_0(x(s))=0$. According to \eqref{e:La0}, this turns out to be a linear transport equation on $a_0(x(s))$. Moreover, the coefficient of the first-order term, namely $\nabla_\xi p_2(x(s),\xi(s))$, is different from $0$. Therefore, given $a_0\neq0$ at $(t=0,x=x(0))$, this transport equation has a solution $a_0(x(s))$ with initial datum $a_0$, and, by Cauchy uniqueness, $a_0(x(s))\neq 0$ for any $s$. Note that we have prescribed $a_0$ only along $\Gamma$, and we may choose $a_0$ in a smooth (and arbitrary) way outside $\Gamma$. We choose it to vanish outside a small neighborhood of $\Gamma$.

\paragraph{Proof of \eqref{e:boundedenergy}.} We use \eqref{e:opondesvk} and we apply Lemma \ref{l:oscint} to $S=3$, $c=A_1$ and to $S=1$, $c=A_2$, and we get
\begin{equation*}
\|\partial_{tt}^2v_k-\Delta v_k\|_{L^1(0,T;L^2(M))}\leq C(k^{-\frac12}+k^{-\frac12}+k^{-1}),
\end{equation*}
which implies \eqref{e:boundedenergy}.

\paragraph{Proof of \eqref{e:convenergyforv}.} We first observe that  since $\text{Im}(M(s))$ is positive definite on the orthogonal complement of $\dot{x}(s)$ and continuous as a function of $s$, there exist $\alpha,C>0$ such that for any $t(s)\in [0,T]$ and any $x'\in M$,
\begin{equation*}
|\partial_tv_k(t(s),x')|^2+\sum_{j=1}^m|X_jv_k(t(s),x')|^2\geq \left(C|a_0(t(s),x')|^2k^{\frac{n}{2}}+O(k^{2(\frac{n}{2}-1)})\right)e^{-\alpha k d(x',x'(s))^2}
\end{equation*}
where $d(\cdot,\cdot)$ denotes the Euclidean distance in $\R^n$. We denote by $\ell_n$ the Lebesgue measure on $\R^n$. Using the observation that for any function $f$, 
\begin{equation}\label{e:generalintest}
\int_{M}f(x')e^{-\alpha kd(x',x'(s))^2}d\mu(x')\sim \frac{\pi^{n/2}}{k^{n/2}\sqrt{\alpha}}f(x'(s))\frac{d\mu}{d\ell_n}(x'(s))
\end{equation}
as $k\rightarrow +\infty$,  and the fact that $a_0(x(s))\neq 0$, we obtain \eqref{e:convenergyforv}.

\paragraph{Proof of \eqref{e:decreaseenergyforv}.} We observe that since $\text{Im}(M(s))$ is positive definite (uniformy in $s$) on the orthogonal complement of $\dot{x}(s)$, there exist $C,\alpha'>0$ such that for any $t\in[0,T]$, for any $x'\in M$, $|\partial_tv_k(t(s),x')|$ and $|X_j v_k(t(s),x')|$ are both bounded above by $C k^{\frac{n}{4}}e^{-\alpha' kd(x',x'(s))^2}$. Therefore
\begin{align} \label{e:ineqforenergy}
\int_{M\backslash V_{t(s)}}  \left(|\partial_tv_k(t(s),x')|^2+\sum_{j=1}^m|X_j v_k(t(s),x')|^2\right)d\mu(x')& \nonumber \\
\leq Ck^{n/2}\int_{M\backslash V_{t(s)}} e^{-2\alpha' kd(x',x'(s))^2}d\mu(x')& \nonumber \\
\leq Ck^{n/2}\int_{M\backslash V_{t(s)}} e^{-2\alpha' kd(x',x'(s))^2}d\ell_n(x') &+ \o(1)
\end{align}
where, in the last line, we used the fact that $|d\mu/d\ell_n|\leq C$ in a fixed compact subset of $M$ (since $\mu$ is a smooth volume), and the $\o(1)$ comes from the eventual blowup of $\mu$ at the boundary of $M$.

\smallskip

Now, $M\subset \R^n$, and there exists $r>0$ such that $B_d(x(s),r)\subset V_{t(s)}$ for any $s$ such that $t(s)\in (0,T)$, where $d(\cdot,\cdot)$ still denotes the Euclidean distance in $\R^n$. Therefore, we bound above the integral in \eqref{e:ineqforenergy} by
\begin{equation} \label{e:ineqforenergy2}
Ck^{n/2}\int_{\R^n\backslash B_d(x(s),r)} e^{-2\alpha' kd(x',x'(s))^2}d\ell_n(x') 
\end{equation}
Making the change of variables $y=k^{-1/2}(y-x(s))$, we bound above \eqref{e:ineqforenergy2} by
\begin{equation*}
C\int_{\R^n\backslash B_d(0,rk^{1/2})} e^{-2\alpha' \|y\|^2}d\ell_n(y)
\end{equation*}
with $\|\cdot\|$ the Euclidean norm. This last expression is bounded above by
\begin{equation*} 
Ce^{-\alpha'r^2k}\int_{\R^n} e^{-\alpha'\|y\|^2}d\ell_n(y)
\end{equation*}
which implies \eqref{e:decreaseenergyforv}.

\paragraph{Extension of the result to any manifold $M$.} In the case of a general manifold $M$, not necessarily included in $\R^n$, we use charts together with the above construction. We cover $M$ by a set of charts $(U_\alpha,\varphi_\alpha)$, where $(U_\alpha)$ is a family of open sets of $M$ covering $M$ and $\varphi_\alpha:U_\alpha\rightarrow \R^n$ is an homeomorphism $U_\alpha$ onto an open subset of $\R^n$. Take a solution $(x(t),\xi(t))_{t\in [0,T]}$ of \eqref{e:bicarac}. It visits a finite number of charts in the order $U_{\alpha_1},U_{\alpha_2},\ldots$, and we choose the charts and $a_0$ so that $v_k(t,\cdot)$ is supported in a unique chart at each time $t$. The above construction shows how to construct $a_0$ and $\psi$ as long as $x(t)$ remains in the same chart. For any $l\geq 1$, we choose $t_l$ so that $x(t_l)\in U_{\alpha_l}\cap U_{\alpha_{l+1}}$ and $a_0(t_l,\cdot)$ is supported in $U_{\alpha_l}\cap U_{\alpha_{l+1}}$. Since there is a (local) solution $v_k$ for any choice of initial $a_0(t_l,x(t_l))$ and $\text{Im}\left(\frac{\partial^2\psi}{\partial x_i\partial x_j}\right)(t_l,x(t_l))$ in Proposition \ref{p:approxgb}, we see that $v_k$ may be continued from the chart $U_{\alpha_l}$ to the chart $U_{\alpha_{l+1}}$. This continuation is smooth since the two solutions coincide as long as $a_0(t,\cdot)$ is supported in $U_{\alpha_l}\cap U_{\alpha_{l+1}}$. Patching all solutions on the time intervals $[t_l,t_{l+1}]$ together, it yields a global in time solution $v_k$, as desired.

\section{Proof of \eqref{e:nullitynu}} \label{a:proofnullitynu}
Because of the second convergence in \eqref{e:twoconvweakobs} and the non-negativity of $a$, it amounts to proving that
\begin{equation*}
(X_1\Op(a)u^k,X_1u^k)_{L^2((0,T)\times M_H)}+(X_2\Op(a)u^k,X_2u^k)_{L^2((0,T)\times M_H)} \rightarrow 0.
\end{equation*}
Now, we notice that for any $B\in\Psi^0_\phg((0,T)\times M_H)$, there holds
\begin{equation} \label{e:Bconv0}
(Bu^k,X_1u^k)_{L^2((0,T)\times M_H)} \underset{k\rightarrow+\infty}{\longrightarrow} 0 \qquad \text{and} \qquad (Bu^k,\partial_tu^k)_{L^2((0,T)\times M_H)} \underset{k\rightarrow+\infty}{\longrightarrow} 0
\end{equation}
since $u^k\rightarrow 0$ strongly in $L^2((0,T)\times M_H)$ and both $X_1u^k$ and $\partial_tu^k$ are bounded in $L^2((0,T)\times M_H)$.
We apply this to $B=[X_1,\Op(a)]$, and then, also using \eqref{e:Bconv0}, we see that we can replace $\Op(a)$ by its Friedrichs quantization $\Op^F(a)$, which is positive (see \cite[Chapter VII]{taylorpseudodifferential}). In other words, we are reduced to prove 
\begin{equation} \label{e:toprovefriedrichs}
(\Op^F(a)X_1u^k,X_1u^k)_{L^2((0,T)\times M_H)}+(\Op^F(a)X_2u^k,X_2u^k)_{L^2((0,T)\times M_H)} \underset{k\rightarrow+\infty}{\longrightarrow} 0.
\end{equation}
Let $\delta>0$ and $\widetilde{a}\in S^0_{\phg}((-\delta,T+\delta)\times M_H)$, $0\leq\widetilde{a}\leq \sup(a)$ and such that $\widetilde{a}(t,\cdot)=a(\cdot)$ for $0\leq t\leq T$. Making repeated use of \eqref{e:Bconv0} and of integrations by parts (since $\widetilde{a}$ is compactly supported in time), we have
\begin{align}
\sum_{j=1}^2(\Op^F(\widetilde{a})X_j u^k, X_j u^k)_{L^2((0,T)\times M_H)}&=\sum_{j=1}^2(X_j \Op^F(\widetilde{a}) u^k, X_ju^k)_{L^2((0,T)\times M_H)}+o(1) \nonumber \\
&=-(\Op^F(\widetilde{a}) u^k,\Delta u^k)_{L^2((0,T)\times M_H)}+\o(1) \nonumber\\
&=-(\Op^F(\widetilde{a}) u^k, \partial_{t}^2u^k)_{L^2((0,T)\times M_H)}+\o(1) \nonumber \\
&=(\partial_t \Op^F(\widetilde{a}) u^k,\partial_{t} u^k)_{L^2((0,T)\times M_H)}+\o(1) \nonumber \\
&= (\Op^F(\widetilde{a})\partial_{t} u^k, \partial_tu^k)_{L^2((0,T)\times M_H)}+\o(1). \nonumber 
\end{align}
Finally we note that since $\Op^F$ is a positive quantization, we have
\begin{align}
\sum_{j=1}^2(\Op^F(a)X_j u^k, X_j u^k)_{L^2((0,T)\times M_H)}&\leq \sum_{j=1}^2(\Op^F(\widetilde{a})X_j u^k, X_j u^k)_{L^2((0,T)\times M_H)} \nonumber \\
&=(\Op^F(\widetilde{a})\partial_{t} u^k, \partial_tu^k)_{L^2((0,T)\times M_H)}+\o(1) \nonumber \\
&\leq C\delta+(\Op^F(a)\partial_{t} u^k, \partial_tu^k)_{L^2((0,T)\times M_H)}+\o(1) \nonumber  \\
&\leq C\delta+o(1) \nonumber
\end{align}
where $C$ does not depend on $\delta$. Making $\delta\rightarrow0$, it concludes the proof of \eqref{e:toprovefriedrichs}, and consequently \eqref{e:nullitynu} holds.

\end{document}